\newcommand{\A}{\mathcal{A}}
\newcommand{\B}{\mathcal{B}}
\newcommand{\C}{\mathcal{C}}
\renewcommand{\P}{\mathcal{P}}
\newcommand{\K}{\mathbb{K}}
\newcommand{\Q}{\mathbb{Q}}
\newcommand{\R}{\mathbb{R}}
\newcommand{\dom}{\mathrm{dom}}
\newcommand{\N}{\mathbb{N}}
\newcommand{\iso}{\mathrm{Iso}}
\newcommand{\aut}{\mathrm{Aut}}
\newcommand{\U}{\mathbb{U}}
\newcommand{\ball}{\mathrm{ball}}
\newcommand{\sphere}{\mathrm{S}}
\newcommand{\diam}{\mathrm{diam}}
\newcommand{\id}{\mathrm{id}}
\newcommand{\elomega}{\mathcal{L}_{\omega_1\omega}}
\newcommand{\mtp}{\mathrm{qftp}}
\newcommand{\cmtp}{\mathrm{qftp}}
\newcommand{\rng}{\mathrm{rng}}
\newcommand{\MALG}{\mathrm{MALG}}
\newcommand{\malg}{\mathrm{MALG}}
\newcommand{\lspan}{\mathrm{span}}
\newcommand{\gr}{\mathrm{Gr}}
\newcommand{\at}{\mathrm{atom}}
\def\Ind#1#2{#1\setbox0=\hbox{$#1x$}\kern\wd0\hbox to 0pt{\hss$#1\mid$\hss}
\lower.9\ht0\hbox to 0pt{\hss$#1\smile$\hss}\kern\wd0}
\def\Notind#1#2{#1\setbox0=\hbox{$#1x$}\kern\wd0\hbox to
0pt{\mathchardef\nn="0236\hss$#1\nn$\kern1.4\wd0\hss}\hbox
to 0pt{\hss$#1\mid$\hss}\lower.9\ht0
\hbox to 0pt{\hss$#1\smile$\hss}\kern\wd0}
\def\ind{\mathop{\mathpalette\Ind{}}}
\newtheorem{theorem}{Theorem}[section]
\newtheorem{lemma}[theorem]{Lemma}
\newtheorem{corollary}[theorem]{Corollary}
\newtheorem{proposition}[theorem]{Proposition}
\newtheorem{claim}[theorem]{Claim}
\theoremstyle{definition}
\newtheorem{definition}[theorem]{Definition}
\newtheorem{question}[theorem]{Question}
\title{Automatic continuity for isometry groups}
\author{Marcin Sabok}\thanks{The author acknowledges partial
  support from the following sources: the NCN (the Polish
  National Science Centre) grant no. 2012/05/D/ST1/03206 and
  the MNiSW (the Polish Ministry of Science and Higher
  Education) grant no. 0435/IP3/2013/72.}
\address{Instytut Matematyczny Polskiej Akademii Nauk,
  ul. \'Sniadeckich 8, 00-956 Warszawa, Poland}
\address{Instytut Matematyczny Uniwersytetu Wroc\l awskiego,
  pl. Grunwaldzki 2/4, 50--384 Wroc\l aw, Poland}
\email{M.Sabok@impan.pl}
\date{}
\subjclass[2010]{03E15, 54H11, 37B05}
\begin{document}

\begin{abstract}
  We present a general framework for automatic continuity
  results for groups of isometries of metric spaces. In
  particular, we prove automatic continuity property for the
  groups of isometries of the Urysohn space and the Urysohn
  sphere, i.e. that any homomorphism from either of these
  groups into a separable group is continuous. This answers
  a question of Melleray. As a consequence, we get that the
  group of isometries of the Urysohn space has unique Polish
  group topology and the group of isometries of the Urysohn
  sphere has unique separable group topology. Moreover, as
  an application of our framework we obtain new proofs of
  the automatic continuity property for the group
  $\aut([0,1],\lambda)$, due to Ben Yaacov, Berenstein and
  Melleray and for the unitary group of the
  infinite-dimensional separable Hilbert space, due to
  Tsankov. The results and proofs are stated in the language
  of model theory for metric structures.
\end{abstract}

\maketitle

\section{Introduction}

It is well known that every group is isomorphic to the group
of isometries of a metric space (or even of a
graph). Moreover, if $G$ is the group of isometries of a
metric space $X$, then $G$ carries the topology of pointwise
convergence on $X$. If the space $X$ is separable and its
metric is complete, then $G$ is separable and completely
metrizable (i.e. \textit{Polish}). In fact, the coverse is
also true and any Polish group is isomorphic to the group of
isometries of a separable complete metric space
\cite[Theorem 3.1]{gao.kechris}. Note that if a group is
isomorphic to the group of isometries of a space $X$, then
the structure of this group is completely determined by the
metric properties of $X$. In this paper, we exploit this
observation to study the structure of various groups of isometries.

Automatic continuity is a phenomenon that connects the
algebraic and topological structures and typically says that
any map which preserves an algebraic structure must
automatically be continuous. One of the first instances of
this phenomenon appears in C*-algebras and Banach algebras,
where it is known that any homomorphism from an abelian
Banach algebra into $\mathbb{C}$ is continuous. More
nontrivial results concern continuity of derivations on
C*-algebras. Sakai \cite{sakai} (proving a conjecture of
Kaplansky \cite{kaplansky}) showed that any derivation on a
C*-algebra is norm-continuous. This was generalized first by
Kadison \cite{kadison} who improved it to the continuity in
the ultraweak topology and then by Ringrose \cite{ringrose}
for derivations of C*-algebras into Banach modules. Johnson
and Sinclair \cite{johnson.sinclair} on the other hand,
showed automatic continuity for derivations on semi-simple
Banach algebras. A detailed account on this subject can be
found in \cite{dales.survey,dales.book}.

In the context of groups and their homomorphisms, one of the
first automatic continuity results has been proved by Dudley
\cite{dudley}, who showed that any homomorphism from a
complete metric or a locally compact group into a normed
(e.g. free) group is continuous (see also \cite{slutsky} for
a recent generalization to homomorphisms into free
products). A general form of automatic continuity phenomenon
for groups has appeared in the work of Kechris and Rosendal
\cite{kechris.rosendal}, with connection to the results of
Hodges, Hodkinson, Lascar and Shelah \cite{hhls}.

A topological group $G$ has the \textit{automatic continuity
  property} if for every separable topological group $H$,
any group homomorphism from $G$ to $H$ is continuous. Recall
\cite[Theorem 9.10]{kechris} that any measurable
homomorphism from a Polish group to a separable group must
be continuous and the existence of non-measurable
homomorphisms on groups such as $(\mathbb{R},+)$ can be
derived from the axiom of choice. So, similarly as
amenability, automatic continuity property for a given group
can be interpreted in terms of nonexistence (on this group)
of pathological phenomena that can follow from the axiom of
choice.

Kechris and Rosendal \cite{kechris.rosendal}
showed that automatic continuity is a consequence of the
existence of comeager orbits in the diagonal conjugacy
actions of $G$ on $G^n$ for each $n\in\N$
(i.e. \textit{ample generics}, cf \cite[Section
1.6]{kechris.rosendal}) and discovered a connection between
ample generics for the automorphism groups of homogeneous
structures and the Fra\"iss\'e theory. In consequence, many
automorphism groups of homogeneous structures turned out to
have the automatic continuity property. However, automatic
continuity can hold also for groups which do not have ample
generics (and even have meager conjugacy classes). Rosendal
and Solecki \cite{rosendal.solecki} proved that automatic
continuity property holds for the groups of homeomorphisms
of the Cantor space and of the real line, and for the
automorphism group of $(\mathbb{Q},<)$. Rosendal
\cite{rosendal.manifolds} showed automatic continuity for
the groups of homeomorphisms of compact 2-manifolds. A
survey on recent results in this area can be found in
\cite{rosendal.survey}.

The Urysohn space $\U$ is the separable complete metric
space which is \textit{homogeneous} (i.e. any finite partial
isometry of $\U$ extends to an isometry of $\U$) and such
that any finite metric space embeds into $\U$
isometrically. It is known that these properties of $\U$
determine it uniquely up to isometry and that any separable
metric space embeds into $\U$ \cite[Theorem
5.1.29]{pestov}. The analogue of the Urysohn space of
diameter 1 also exists and is called the \textit{Urysohn
  sphere} (or the \textit{bounded Urysohn space of diameter
  1}) and denoted by $\U_1$ (see \cite[Remark
5.1.31]{pestov}).  The group of isometries of $\U$ is
universal among Polish groups, i.e. any Polish group is its
closed subgroup \cite[Theorem 2.5.2]{gao}. The Urysohn
space, as well as its group of isometries, have received a
considerable amount of attention recently. Tent and Ziegler
\cite{tent.ziegler} showed that the quotient of the group of
isometries of $\U$ modulo the normal subgroup of bounded
isometries is a simple group and recently
\cite{tent.ziegler.new} proved that the group of isometries
of the bounded Urysohn space is simple. For more on the
structure of the Urysohn space and its group of isometries,
see the recent monographs \cite{pestov,gao.kechris,gao} on
this subject.

Kechris and Rosendal showed that the group of automorphisms
of the \textit{rational Urysohn space} (which is the Fra\"\i
ss\'e limit of the class of finite metric spaces with
rational distances) has ample generics and deduced from it
that the group $\iso(\U)$ has a dense conjugacy class
\cite[Theorem 2.2]{kr}. The question whether the group of
isometries of the Urysohn space has the automatic continuity
property has been asked by Melleray (cf. \cite[Section
6.1]{bybm}). One of the main applications of the results of
this paper is the following.

\begin{theorem}\label{urysohn}
  The groups of isometries of the Urysohn space and the
  Urysohn sphere have the automatic continuity property.
\end{theorem}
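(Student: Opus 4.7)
The plan is to verify the Kechris--Rosendal Steinhaus-type criterion for $G=\iso(\U)$: namely, that for every countable cover of $G$ by symmetric sets $W_n$ there exist $n$, a uniform $k$, and a finite tuple $\bar a\in\U^{<\omega}$ such that the pointwise stabilizer $G_{\bar a}$ is contained in $W_n^k$. Since the $G_{\bar a}$ form a neighborhood base at the identity in the topology of pointwise convergence, this implies that $G$ has the automatic continuity property; the argument for $\iso(\U_1)$ is parallel, using finite metric spaces of diameter $\le 1$.

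To carry this out, I would work inside the metric Fra\"iss\'e framework advertised in the abstract: $\U$ is the Fra\"iss\'e limit (in continuous logic) of the class of finite metric spaces, and its ultrahomogeneity will be the main lever. The first step is to introduce a suitable notion of \emph{independence} for finite tuples in $\U$ (for instance, placing two tuples at maximal or sufficiently large pairwise distances, which is always realizable inside $\U$ by the extension property), and to prove an associated \emph{amalgamation/extension lemma}: given isometries $g_1,g_2\in G$ that agree on a common independent base, one can realize both by a single isometry of $\U$.

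The core step is then a \emph{tuple-moving lemma} of the following type: for any $\bar a\in\U^{<\omega}$ and any $g\in G$, one can find finitely many conjugates $h_i g h_i^{-1}$, each $h_i$ in a pointwise stabilizer $G_{\bar b}$ that is independent from $\bar a$ in the above sense, whose product reproduces the action of $g$ on $\bar a$. Feeding this into the standard Baire-category argument of Kechris--Rosendal applied to an arbitrary countable cover $G=\bigcup_n W_n$, one fixes a comeager $W_n\cdot g$ inside a coset, extracts $\bar a,\bar b$ as above, and uses the tuple-moving lemma to sandwich $G_{\bar a}$ inside a bounded power $W_n^k$.

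The main obstacle, I expect, is that the Fra\"iss\'e class of finite metric spaces is uncountable and amalgamation holds only approximately, so the naive transcription of the discrete ample-generics machinery fails. The delicate part is therefore to control the cumulative metric errors produced by iterated approximate amalgamations while maintaining the \emph{exact} inclusion $G_{\bar a}\subseteq W_n^k$ with $k$ independent of $n$; this is presumably where the continuous model-theoretic formalism pays off, by allowing one to work with quantifier-free types up to arbitrarily small perturbation and then invoke approximate homogeneity of $\U$ to realize a genuine isometry.
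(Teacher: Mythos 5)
Your high-level route matches the paper's (establish a Steinhaus-type criterion via something like ample generics and independence/extension), but there is one error in your plan that is not a side issue — it is exactly the problem the paper's machinery is built to solve — and there is a second, essential ingredient you omit entirely.

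First, the false step: you assert that the pointwise stabilizers $G_{\bar a}$ form a neighborhood base at the identity in $\iso(\U)$. They do not. The topology on $\iso(\U)$ is that of pointwise convergence with respect to the metric of $\U$, and a basic neighborhood of $\mathrm{id}$ has the form $\{g : d_{\U}(g(a_i),a_i)<\varepsilon \text{ for } i\le k\}$; the exact stabilizer $G_{\bar a}$ is closed, nowhere dense, and certainly not open. So showing $G_{\bar a}\subseteq W_n^k$ alone does not yield Steinhaus, and the Baire-category argument does not close. This is precisely why the paper introduces \emph{(weakly) isolated sequences}. The actual route is two-staged: (i) pass by a L\"owenheim--Skolem argument to a countable dense elementary substructure $N\prec\U$, where $\aut(N)$ (as a subgroup of $S_\infty$) has ample generics, and conclude from that that $G_{\bar a}\subseteq W^{10}$ for some finite $\bar a$ and $W$ symmetric countably syndetic; and then (ii) use isolated sequences, built from the Huhunai\v svili compact-extension theorem, to promote the ``thin'' set $G_{\bar a}$ to a genuine open set $\{g:g(\bar a)\in U\}$ inside a bounded power $W^{k}$. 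Step (ii) is entirely absent from your plan.

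Second, you never mention the input that makes ample generics possible at all for the countable substructure: that the Urysohn space has ``locally finite automorphisms,'' i.e.\ Solecki's theorem that any finite system of partial isometries of a finite metric space extends to a finite metric superspace on which they become total isometries. This rests on the Herwig--Lascar extension theorem and, ultimately, on Ribes--Zalesski\u\i's separability of products of finitely generated subgroups of free groups. Without this ingredient the ample-generics half of the argument has no starting point; your ``amalgamation/extension lemma'' and ``independence at large distance'' give the extension property but not locally finite automorphisms, which is a distinct and much deeper fact. Your worry about ``approximate amalgamation producing cumulative errors'' is therefore not the actual obstacle: amalgamation of finite metric spaces is exact, and the cardinality issue is handled by the elementary-substructure reduction, not by error control.
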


Theorem \ref{urysohn} has some immediate consequences on the
topological structure of the above groups, in spirit of the
results of Kallman \cite{kallman.malg,kallman.homeo} and
Atim and Kallman \cite{atim.kallman}. The first one is an
abstract consequence of automatic continuity

\begin{corollary}
  The group $\iso(\U)$ has unique Polish group topology.
\end{corollary}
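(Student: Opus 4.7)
The plan is to deduce uniqueness from Theorem~\ref{urysohn} by the standard ``automatic continuity implies uniqueness of Polish topology'' argument. Let $\tau_0$ denote the usual Polish group topology on $\iso(\U)$ (pointwise convergence on $\U$), with respect to which Theorem~\ref{urysohn} asserts the automatic continuity property. Suppose $\tau$ is another Polish group topology on $\iso(\U)$, and consider the identity map
\[
\id\colon (\iso(\U),\tau_0) \longrightarrow (\iso(\U),\tau).
\]
This is a group homomorphism into a separable topological group, since every Polish group is separable. By Theorem~\ref{urysohn}, applied to the source $(\iso(\U),\tau_0)$, the map $\id$ is continuous, so $\tau \subseteq \tau_0$.

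For the reverse inclusion I would invoke the classical open mapping theorem for Polish groups: any continuous surjective homomorphism between Polish groups is open. This follows from Pettis's theorem via a Baire category argument (see, e.g., Kechris, \textit{Classical Descriptive Set Theory}). Applied to the continuous bijective homomorphism $\id\colon (\iso(\U),\tau_0) \to (\iso(\U),\tau)$ above, it gives that $\id$ is also open, whence $\tau_0 \subseteq \tau$. Combining the two inclusions yields $\tau=\tau_0$.

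There is no real obstacle here; the only point requiring attention is that Theorem~\ref{urysohn} must be applied in the correct direction, namely to the topology $\tau_0$ in which automatic continuity is already known, rather than to the hypothetical competitor $\tau$. Symmetrization between $\tau_0$ and $\tau$ is then supplied not by automatic continuity but by the Pettis-style open mapping theorem, which holds for all Polish group topologies without any further assumption.
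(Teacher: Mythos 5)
Your argument is correct and is exactly the standard one the paper alludes to when it calls the corollary ``an abstract consequence of automatic continuity'': apply Theorem~\ref{urysohn} to the identity map into the competing Polish topology to get one inclusion, and the open mapping theorem for Polish groups (a continuous bijective homomorphism of Polish groups is a homeomorphism) to get the other.
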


Recall that a group is \textit{minimal} if it does not admit
any strictly corser (Hausdorff) group topology (in this
paper we consider only Hausdorff topologies on groups).  The
second corollary follows from minimality of the group of
isometries of the Urysohn sphere, proved by Uspenskij
\cite{uspenskij}

\begin{corollary}
  The group $\iso(\U_1)$ has unique separable group
  topology.
\end{corollary}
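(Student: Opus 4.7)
The plan is to combine the automatic continuity property just stated in Theorem \ref{urysohn} with Uspenskij's minimality theorem for $\iso(\U_1)$ cited just above. Let $\tau$ denote the standard Polish topology on $G = \iso(\U_1)$, that is, the topology of pointwise convergence on $\U_1$. Suppose $\sigma$ is any separable (Hausdorff) group topology on $G$; the goal is to show that $\sigma = \tau$.

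First I would look at the identity map $\id \colon (G,\tau) \to (G,\sigma)$. This is a group homomorphism from the Polish group $(G,\tau)$ into a separable topological group, so Theorem \ref{urysohn} applies and shows that $\id$ is continuous. Unpacking this, every $\sigma$-open set is $\tau$-open, i.e.\ $\sigma \subseteq \tau$.

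Next I would invoke Uspenskij's theorem, which says that $(G,\tau)$ is a minimal topological group: it admits no strictly coarser Hausdorff group topology. Since $\sigma$ is a Hausdorff group topology on $G$ with $\sigma \subseteq \tau$, minimality forces $\sigma = \tau$, proving uniqueness.

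No real obstacle arises: both inputs are in hand, one from Theorem \ref{urysohn} (whose proof is the main content of the paper) and one from the cited result of Uspenskij. The only thing to keep in mind is that ``separable group topology'' here is to be read as ``separable Hausdorff group topology,'' so that Uspenskij's minimality result applies directly at the last step.
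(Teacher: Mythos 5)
Your proposal is correct and follows exactly the paper's intended argument: apply automatic continuity (Theorem \ref{urysohn}) to the identity map into $(G,\sigma)$ to get $\sigma\subseteq\tau$, then invoke Uspenskij's minimality theorem to conclude $\sigma=\tau$. The paper gives no further detail beyond citing these two ingredients, so your write-up simply makes the same two-step deduction explicit.
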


Theorem \ref{urysohn} will follow from the following
abstract result, which isolates metric (or model-theoretic)
properties of a metric structure that imply that the group
of automorphisms (with the pointwise convergence topology)
of the structure has the automatic continuity property. The
definitions of a metric structure and the notions appearing
in the statement of the theorem are given in Sections
\ref{sec:first.order} and \ref{sec:metric.structures}.

\begin{theorem}\label{abstract}
  Suppose $M$ is a homogeneous complete metric structure
  that has locally finite automorphisms, the extension
  property and admits weakly isolated sequences. Then the
  group $\aut(M)$ has the automatic continuity property.
\end{theorem}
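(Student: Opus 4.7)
The plan is to verify the Kechris--Rosendal criterion for automatic continuity into separable groups. Concretely, it suffices to show that for every symmetric set $W \subseteq \aut(M)$ containing the identity that is countably syndetic (i.e.\ $\aut(M) = \bigcup_n g_n W$ for some sequence $(g_n)$), there exist a finite tuple $\bar a$ in $M$ and an integer $k$ such that the pointwise stabilizer $\aut(M/\bar a)$ is contained in $W^k$. Since the basic neighborhoods of the identity in the pointwise convergence topology are precisely these stabilizers, establishing this property gives automatic continuity: a homomorphism into a separable group pulls back any neighborhood of the identity to a countably syndetic symmetric set, so such a $W^k$ will land inside the preimage.

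First I would fix such a $W$ and carry out a Baire category argument on a Polish space of finite partial isomorphisms of $M$. The hypothesis of locally finite automorphisms enters here: it ensures that a suitably rich subgroup of $\aut(M)$ is generated by finite partial pieces, so that covering $\aut(M)$ by countably many translates of $W$ descends to a cover of this subgroup, and it provides the natural topology in which a Baire category argument can be applied to partial isomorphisms. Combined with homogeneity of $M$, this yields a finite tuple $\bar a$ together with a comeager family of finite partial isomorphisms $\phi$ fixing $\bar a$ such that any automorphism realizing the quantifier-free type of $\phi$ lies in a fixed coset of $W$.

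Next I would use the extension property to amalgamate pairs of such generic partial isomorphisms. Given two generic $\phi$ and $\psi$ agreeing on $\bar a$, the extension property produces a finite partial isomorphism extending both, and homogeneity lifts it to an honest automorphism; this automorphism witnesses that the relevant product lies in $W \cdot W^{-1} = W^2$. Iterating the amalgamation a bounded number of times lets one realize any automorphism that is itself locally finite over $\bar a$ as an element of some uniform power $W^k$.

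The main obstacle, and precisely where the hypothesis of admitting weakly isolated sequences is indispensable, is passing from this finitary statement to an arbitrary element of $\aut(M/\bar a)$. For a general $h \in \aut(M/\bar a)$, approximating $h$ coordinate-by-coordinate would a priori require a new factor of $W$ at every step, destroying any uniform bound on $k$. Weakly isolated sequences supply a countable sequence whose types are sufficiently determined that an entire countable chain of amalgamations can be absorbed into a uniformly bounded number of $W$-factors, with convergence to $h$ in the pointwise topology guaranteed by completeness of $M$ and homogeneity. Executing this last step carefully, while controlling the final value of $k$, will be the delicate heart of the proof; the earlier Baire category and amalgamation steps are comparatively standard once one has the right dense subgroup and extension properties in hand.
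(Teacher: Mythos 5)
Your plan starts from an incorrect reduction. You state that it suffices to put the pointwise stabilizer $\aut(M/\bar a)$ inside $W^k$ because ``the basic neighborhoods of the identity in the pointwise convergence topology are precisely these stabilizers.'' That is only true when $M$ carries the discrete metric (so that $\aut(M)\leq S_\infty$). For a genuine metric structure, the basic neighborhoods of the identity in $\aut(M)$ are the sets $\{g: d_M(g(a_i),a_i)<\varepsilon\ \text{for}\ i\leq n\}$ with $\varepsilon>0$; the stabilizer $G_{\bar a}$ is merely a closed, typically nowhere dense, subgroup. So showing $G_{\bar a}\subseteq W^k$ does not make $W^k$ contain an open set, does not give the Steinhaus property, and does not yield continuity of a homomorphism into a separable group. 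The correct target (following Rosendal--Solecki) is to show that $W^k$ contains an honest open neighborhood of $1$ for a uniform $k$.

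This misdiagnosis also puts the hypothesis of weakly isolated sequences in the wrong place. In the paper, the containment $G_{\bar a}\subseteq W^{10}$ is established in Lemma \ref{elementary} \emph{without} weakly isolated sequences: one uses the L\"owenheim--Skolem lemma to pass to a suitable countable dense elementary substructure, and then applies ample generics in the $S_\infty$-topology (via Lemma \ref{amplegen} and Corollary \ref{amplegencor}, using only homogeneity, locally finite automorphisms and the extension property). The weakly isolated sequence hypothesis is needed \emph{afterwards}, and precisely to cross the gap you glossed over: starting from the nonmeager set $Z=\{w(\bar a):w\in W\}\subseteq p(M)$, one chooses an $m$-weakly isolated sequence $\bar a_n$ in $Z$ with the associated $T_n$, $Y_n$, $\varepsilon_n$, proves a fullness claim (Claim \ref{fullness}) to get $\{g: d_M(\bar a,g(\bar a))<\varepsilon_n \wedge g(\bar a)\in T\}\subseteq W^{24}$, and then uses that $T$ $(m,\varepsilon_n)$-generates an open set to conclude that $\{g: g(\bar a)\in U\}\subseteq W^{24m+10}$ for a nonempty open $U$. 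That is the genuinely metric step and the reason admitting weakly isolated sequences is part of the hypothesis. Your proposal frames the entire difficulty as absorbing an infinite chain of amalgamations inside the stabilizer, which is not where the argument lives and does not produce the required open set.
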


Theorem \ref{abstract} can be also applied to give a unified
treatment of previously known automatic continuity results
for automorphism groups of some metric structures. It is
worth mentioning that up to now, these results have been
proved with different methods, varying from case to case. In
this paper, we apply Theorem \ref{abstract} to show the
automatic continuity property for the group
$\aut([0,1],\lambda)$ (the group of measure-preserving
automorphism of the unit interval) and the group $U(\ell_2)$
(unitary operators of the infinite-dimensional separable
Hilbert space).

Automatic continuity for the group $\aut([0,1],\lambda)$ has
been proved in a series of two papers
\cite{kittrell.tsankov,bybm}. Kittrell and Tsankov
\cite{kittrell.tsankov} showed first that any homomorphism
from $\aut([0,1],\lambda)$ to a separable group must be
continuous in the strong topology of $\aut([0,1],\lambda)$
(see \cite[Section 1]{kechris.global}) and later, Ben
Yaacov, Berenstein and Melleray \cite{bybm} proved a general
result which implies that any homomorphism which is
continuous in the strong topology on $\aut([0,1],\lambda)$
must be continuous in the weak topology on
$\aut([0,1],\lambda)$ (this approach has been recently
simplified by Malicki \cite{malicki}). The group
$\aut([0,1],\lambda)$ (with the weak topology) is isomorphic
to the group of automorphism of the measure algebra and
applying Theorem \ref{abstract} to the measure algebra we
get a new proof of automatic continuity.

\begin{corollary}[Ben Yaacov, Berenstein, Melleray]\label{malg}
  The group $\aut([0,1],\lambda)$ has the automatic
  continuity property.  
\end{corollary}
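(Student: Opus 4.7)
The plan is to apply Theorem \ref{abstract} with $M = \MALG$, the measure algebra of $([0,1],\lambda)$ regarded as a metric structure (in the sense of continuous logic) with the symmetric-difference metric $d(A,B)=\lambda(A\triangle B)$ and the usual Boolean operations as predicates/functions. The identification $\aut([0,1],\lambda)\cong\aut(\MALG)$ under the weak topology is classical (going back to Halmos), and under this identification the weak topology on $\aut([0,1],\lambda)$ is exactly the topology of pointwise convergence on $\MALG$. Hence automatic continuity for $\aut(\MALG)$, as a topological group of automorphisms of a metric structure, transfers verbatim to $\aut([0,1],\lambda)$.

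The real content is to verify the four hypotheses of Theorem \ref{abstract} for $\MALG$. Completeness of $(\MALG,d)$ is standard, and homogeneity in the metric-structure sense amounts to the classical statement that any measure-preserving isomorphism between two finite Boolean subalgebras of $\MALG$ (equivalently, any weight-preserving bijection between two finite measurable partitions of $[0,1]$) extends to a measure-preserving automorphism of the whole space; this is immediate from atomlessness. Local finiteness of automorphisms should follow because any finite subalgebra is generated by its atoms, so a partial automorphism is determined by a permutation of those atoms, which can be realized globally by permuting equal-measure pieces inside each atom; approximating an arbitrary automorphism of $\MALG$ by such finitely supported permutations uses the density of dyadic subalgebras. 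The extension property likewise rests on atomlessness: given a finite subalgebra together with a prescribed refinement of measures, one can subdivide each atom into pieces of the prescribed measures inside $\MALG$.

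The main obstacle, as in all applications of the framework, will be producing weakly isolated sequences. The natural candidates are the dyadic subalgebras $A_n$ generated by the partition of $[0,1]$ into the $2^n$ dyadic intervals of length $2^{-n}$; they form an increasing chain whose union is dense in $\MALG$, and a quantifier-free type over $A_n$ is controlled, up to arbitrarily small metric error, by prescribing the measures of finitely many Boolean combinations with the generators. One has to show that this control is tight enough to meet the precise definition given in Section \ref{sec:metric.structures}, essentially by arguing that any element of $\MALG$ near a given tuple can be approximately realized by a small perturbation inside a fixed $A_n$, with the perturbation depending continuously on the data. I expect this verification to be the only delicate step; the other three conditions are essentially soft consequences of atomlessness and the dyadic approximation.

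Once all four hypotheses are verified, Theorem \ref{abstract} yields the automatic continuity property for $\aut(\MALG)$, and by the Halmos identification this is the same as Corollary \ref{malg}.
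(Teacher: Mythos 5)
Your top-level strategy is correct and matches the paper's: apply Theorem \ref{abstract} to the measure algebra $\MALG$, using the standard identification $\aut([0,1],\lambda)\cong\aut(\MALG)$ that carries the weak topology to the pointwise-convergence topology. Completeness and homogeneity are as you say, and your sketch of the extension property (stochastic independence inside each atom) is exactly what the paper does.

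However, two of your verifications are off, and one of these is a genuine gap. First, your description of \emph{locally finite automorphisms} is confused: the property is not about approximating automorphisms of $\MALG$ by finitely supported permutations of dyadic pieces. It asks that every finite subalgebra $\A$ is contained in a bigger finite subalgebra $\B$ such that \emph{every} partial automorphism of $\A$ (every isomorphism between subalgebras of $\A$) extends to a full automorphism of $\B$. The paper's proof introduces ``good'' extensions, in which atoms of equal measure are identically partitioned, and builds $\B$ by an induction over all pairs of disjoint equal-measure sets in $\A$, inserting stochastically independent refinements at each step. There is no shortcut from atomlessness to this as your sketch suggests; it is genuinely combinatorial.

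Second and more seriously, your proposed candidate for (weakly) isolated sequences --- the increasing chain of dyadic subalgebras $A_n$ --- is a category error. By Definitions \ref{def:misolated} and \ref{def:admits}, an isolated sequence is a sequence $(\bar a_n)$ of $k$-tuples \emph{all realizing one fixed quantifier-free type $p$}, which must moreover be found inside an \emph{arbitrary nonmeager} subset $Z\subseteq p(\MALG)$, together with radii $\varepsilon_n>0$ and subsets $Y_n\subseteq p(\MALG)$ that are relatively $\varepsilon_n$-saturated over $\bar a_n$, such that any choice of nearby realizations $\bar b_n\in Y_n$ can be matched by a \emph{single} global automorphism sending $\bar a_n\mapsto\bar b_n$ simultaneously for all $n$. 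The dyadic chain is not a sequence of tuples realizing a fixed type, cannot be placed inside a prescribed nonmeager set, and carries no saturation data; it does not fit the definition at all. What the paper actually constructs, inside any nonmeager $Z$, is a sequence of partitions $(A^n_1,\ldots,A^n_k)\in Z$ that is ``weakly independent'': each $A^n_i$ contains a positive-measure piece $C^n_i$ so that the $C^n_i$ are pairwise disjoint over all $(n,i)$, and for $m\neq n$ each $C^n_i$ lies entirely inside the single atom $A^m_1$ of every other partition. This confinement is what lets the local corrections (each supported in $\bigcup_i C^n_i$) be glued into one global automorphism, while the radius $\varepsilon_n=2\mu(C^n_1)$ and an explicit matrix calculation (Claim \ref{measure.real} together with Lemma \ref{meas.sat}) supply the required relative saturation. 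None of that structure is visible in your dyadic sketch, so this step would have to be done from scratch along the lines of Lemma \ref{stoch.indep} and the subsequent proposition.
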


After the work of Ben Yaacov, Berenstein and Melleray
\cite{bybm}, Tsankov \cite{tsankov} further showed the
automatic continuity property for the infinite-dimensional
unitary group. Given that the group $U(\ell_2)$ is the
automorphism group of the Hilbert space $\ell_2$ (or the
isometry group of the sphere in $\ell_2$), we can apply
Theorem \ref{abstract} to the Hilbert space and get a new
proof of this result.

\begin{corollary}[Tsankov]\label{hilbert}
  The group $U(\ell_2)$ has the automatic continuity
  property.
\end{corollary}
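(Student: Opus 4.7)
The strategy is to apply Theorem~\ref{abstract} directly to the separable Hilbert space $\ell_2$, viewed as a metric structure in the sense of continuous model theory whose automorphism group is precisely $U(\ell_2)$ (with the strong operator topology, i.e. the pointwise convergence topology on the unit sphere). It therefore suffices to verify the hypotheses of that theorem: completeness and homogeneity of the structure, locally finite automorphisms, the extension property, and the existence of weakly isolated sequences.

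Completeness is immediate, and homogeneity is classical: two finite tuples $(x_1,\ldots,x_n)$ and $(y_1,\ldots,y_n)$ in $\ell_2$ have equal quantifier-free type precisely when their Gram matrices agree, and in that case the induced linear isometry $\lspan(x_1,\ldots,x_n) \to \lspan(y_1,\ldots,y_n)$ extends to an element of $U(\ell_2)$ by acting as any chosen unitary (for instance, the identity under a fixed identification) on the orthogonal complements, which are themselves separable Hilbert spaces of the same dimension.

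For locally finite automorphisms, I would approximate the unitary action on a finite-dimensional subspace $V$ by finite subgroups of $U(V)$, which exist to arbitrary precision since $U(V)$ is a compact Lie group, and combine these with finite permutation unitaries acting on a chosen orthonormal decomposition of $V^\perp$, in order to move any given finite tuple throughout its $U(\ell_2)$-orbit up to arbitrarily small error by a finite subgroup of automorphisms. The extension property should follow by a Gram--Schmidt argument: given a finite partial isometry and an additional vector $v$, one chooses the image of $v$ among vectors having the prescribed inner products with the existing targets, which always exist in $\ell_2$ because the orthogonal complement of any finite-dimensional subspace is again an infinite-dimensional Hilbert space.

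To exhibit weakly isolated sequences, I would take a fixed orthonormal sequence $(e_n)_{n\in\N}$: its quantifier-free type is rigidly determined by the identity Gram matrix, and any almost-orthonormal tuple can be orthonormalized with small displacement into a tuple of exactly this type, which should deliver the isolation required. I expect the main obstacle to lie in the verification of weakly isolated sequences, since it is the most model-theoretic of the four conditions and its justification depends on unpacking the precise definition from Section~\ref{sec:metric.structures}; the remaining hypotheses reduce to classical structural facts about the unitary group of $\ell_2$.
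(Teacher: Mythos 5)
Your overall strategy is correct and matches the paper: apply Theorem~\ref{abstract} to $\ell_2$ viewed as a metric structure, whose automorphism group is $U(\ell_2)$. The homogeneity argument (Gram matrix determines the quantifier-free type, extend by any unitary on the orthogonal complement) is also fine. However, three of the four hypothesis verifications contain genuine gaps or misreadings.

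\textbf{Locally finite automorphisms.} You have misread the definition. It does not ask for dense or $\varepsilon$-approximating finite subgroups of $U(V)$. It asks: given a finitely generated substructure $N$ and finitely many \emph{partial} automorphisms $\varphi_1,\ldots,\varphi_n$ of $N$ (isomorphisms between finitely generated substructures of $N$), find a larger finitely generated $N'\supseteq N$ so that each $\varphi_i$ extends to an automorphism of $N'$. In the Hilbert space, a finitely generated substructure is a finite-dimensional $\K$-vector space for some countable field $\K\supseteq\Q$, and the paper shows one may even take $N'=N$, by Gram--Schmidt: extend the partial isometry to the $\K$-span and then arbitrarily on orthogonal complements inside $N$. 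Separately, the approximation claim you invoke is itself false: by Jordan's theorem, finite subgroups of $U(V)$ for $\dim V\geq 2$ lie in a bounded number of cosets of a maximal torus and so cannot be $\varepsilon$-dense for small $\varepsilon$.

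\textbf{Extension property.} What you describe (extend a partial isometry to a further vector with prescribed inner products) is not the extension property of Section~\ref{sec:metric.structures}. The extension property is a model-theoretic independence statement: given finitely generated $B,C\supseteq A$, produce $C'\equiv_A C$ with $B\ind_A C'$. For $\ell_2$ the paper observes that orthogonality $(A\ominus C)\perp(B\ominus C)$ implies independence, so one simply moves a copy of $C\ominus A$ orthogonally to $B$. Your argument would prove something like weak homogeneity, not independence.

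\textbf{Weakly isolated sequences.} This is where the real technical content lives, and ``orthonormalize an almost-orthonormal tuple'' will not deliver it. The definition requires, for \emph{every} nonmeager $Z\subseteq p(\ell_2)$, a sequence $\bar a_n\in Z$ of $k$-tuples together with saturated sets $Y_n\subseteq p(\ell_2)$ and radii $\varepsilon_n>0$ such that any choice of $\bar b_n\in Y_n$ close to $\bar a_n$ is realized \emph{simultaneously} by a single automorphism $\varphi$ with $\varphi(\bar a_n)=\bar b_n$ for all $n$. That simultaneity is the whole game, and moreover the Hilbert space genuinely requires the weaker ``$m$-weakly isolated'' ($m=2$) notion rather than plain isolated sequences: the paper's Lemmas~\ref{epsilon}, \ref{generates}, \ref{weak.cor} introduce ``$T$-relatively $\varepsilon$-saturated'' sets and ``$(2,\varepsilon)$-generating'' sets precisely because the naive relative saturation fails in $\ell_2$. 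The paper's construction uses \emph{strongly linearly independent} sequences, supported on a family of pairwise orthogonal infinite-dimensional subspaces $V_n$, with a careful inductive construction keeping $\bar a_n$ inside $Z$; a single fixed orthonormal sequence with orthonormalization is not the right object.

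So: right theorem, right structure, but two of the verifications misread the definitions (and in the locally-finite case also rest on a false auxiliary claim), and the hardest verification is waved at rather than addressed.
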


Our proof of Theorem \ref{abstract} builds on the work of
Kechris and Rosendal \cite{kr} and Rosendal and Solecki
\cite{rosendal.solecki}. In particular, we use the notion of
ample generics introduced in \cite{kr} and exploit some
ideas of \cite[Section 3]{rosendal.solecki}. The
verification of conditions of Theorem \ref{abstract} in the
case of the Urysohn space uses a result of Solecki
\cite{sol.iso} that is based on earlier results of Herwig
and Lascar \cite{herwig.lascar}. These result in turn, are
connected to a theorem of Ribes and Zalesski\u\i\
\cite{ribes.zalesskii}, who showed separability of products
of finitely-generated subgroups of the free groups (this was
later generalized by Minasyan \cite{minasyan} to hyperbolic
groups). In Section \ref{sec:urysohn} we present a new proof
of Solecki's theorem \cite{sol.iso} in the style of Mackey's
construction of induced actions and based on the
separability result of Ribes and Zalesski\u\i.

This paper is organized as follows. In Sections
\ref{sec:first.order} and \ref{sec:metric.structures} we
give an overview of model-theoretic notions that appear in
the statement of Theorem \ref{abstract}. In Sections
\ref{sec:continuous.logic} and \ref{sec:ample.generics} we
prove a weak version of ample generics for the automorphism
groups of metric structures and Section
\ref{sec:automatic.continuity} includes a proof of Theorem
\ref{abstract}. Section \ref{sec:trivial} contains a further
result on triviality of homomorphisms to groups admitting
complete left-invariant metrics. In Section
\ref{sec:urysohn} we verify that the assumptions of Theorem
\ref{abstract} are satisfied by the Urysohn space, which
proves Theorem \ref{urysohn}. Sections \ref{sec:measure} and
\ref{sec:hilbert} contain discussion of the cases of the
measure algebra and the Hilbert space and proofs of
Corollaries \ref{malg} and \ref{hilbert}.

\section{First-order continuous model
  theory}\label{sec:first.order}

The techniques used in this paper are motivated by the
framework and language of model theory for metric
structures, developed recently by Ben Yaacov, Berenstein,
Henson, Usvyatsov \cite{model.theory} and others. There are,
however, some details, that will vary from the original
setting. In this paper, a \textit{metric structure} is a
tuple $(X,d_X,f_1,f_2,\ldots)$ where $X$ equipped with $d_X$
is a separable metric space and $f_1,f_2,\ldots$ are either
closed subsets of $X^n\times\R^m$ (\textit{relations}), for
some $n,m\in\N$, or continuous functions, from $X^n\times
\R^m$ to $X^k\times \R^l$ for some $n,m,k,l\in\N$ (here we
consider the discrete topology on $\R$, i.e. demand
continuity only on the arguments from $X$). Thus, a metric
structure is a two-sorted structure with the second sort
being a subset of the real line $\R$. Let us note here that
in the examples considered in this paper, the structures
will contain no relational symbols (only functions) and we
allow them in the definition only for sake of generality.

We do not require our metric structures to be complete (as
metric spaces) and we say that a metric structure $X$ is
\textit{complete} if it is complete with respect to $d_X$.

Given a metric structure $M$ we write $\aut(M)$ for the
group of automorphisms of $M$ (i.e. bijections of the first
sort of $M$ which preserve both the metric and each
$f_n$). $\aut(M)$ is always endowed with the topology of
pointwise convergence and if $M$ is complete, then $\aut(M)$
is a Polish group. Say that a metric structure $X$ is
\textit{homogeneous} if every partial isomorphism between
finitely generated substructures of $X$ extends to an
automorphism of $X$. Note that in case $X$ is a metric
space, this coicides with the usual notion of homogeneity
(sometimes also referred to as \textit{ultrahomogeneity}).

The main difference between continuous logic and our
approach lies the syntax. We will consider only a
first-order variant of the language. \textit{Terms} are
either variables, elements of a structure (of the first sort
or the second sort), or expressions of the form
$f(\tau_1,\ldots,\tau_n)$, where $\tau_1,\ldots,\tau_n$ are
terms and $f$ is a function symbol (e.g. the symbol for the
distance function) of appropriate kind (where the numbers
and sorts of the variables are correct). The
\textit{first-order formulas} in our language will be of the
form
\begin{itemize}
\item $\tau=\sigma$, or $R(\tau_1,\ldots,\tau_n)$, where
  $\tau,\sigma,\tau_1,\ldots,\tau_n$ are terms and $R$ is a
  relational symbol,
\item if $\varphi$ and $\psi$ are first-order formulas and
  $x$ is a variable of the first sort, then $\exists x
  \varphi$, $\forall x\varphi$, $\neg\varphi$,
  $\varphi\vee\psi$ are first-order formulas as well
  (quantification is only allowed over the first sort).
\end{itemize}

As usual, a \textit{first-order sentence} is a first-order
formula without free variables. The truth value of a
first-order sentence in a metric structure is defined as in
the classical setting (it is either $0$ or $1$). We will use
the symbol $\prec$ for an elementary substructure, in the
following (classical) sense: given a metric structure $X$
and its substructure $Y\subseteq X$ we write $Y\prec X$ if
for every first order sentence $\sigma$ with parameters in
$Y$, if $\sigma$ is true in $Y$, then $\sigma$ is true in
$X$.

Given a metric structure $X$ and a tuple $\bar
a=(a_1,\ldots,a_m)$ of elements of $X$, a
\textit{quantifier-free type over} $\bar a$ is a set of
quantifier-free formulas $\varphi(\bar x,\bar a)$ for a
fixed sequence of variables $\bar x=(x_1,\ldots,x_n)$ of the
first sort. A \textit{quantifier-free type} is a
quantifier-free type over the empty tuple. Quantifier-free
types will be denoted by $p(\bar x)$ (to indicate the
variables), or simply $p$. If $\bar x=(x_1,\ldots,x_n)$,
then we also say that $p(\bar x)$ is a
\textit{quantifier-free $n$-type over $\bar a$}.

We say that an $n$-tuple $\bar b=(b_1,\ldots,b_n)$ of
elements of a metric structure $X$ \textit{realizes} a given
quantifier-free $n$-type $p$ over $\bar a$ (write $\bar
b\models p$) if $X\models\varphi(\bar b,\bar a)$ for every
$\varphi(\bar x,\bar a)\in p$ (the definition of
satisfaction in a model is the natural one). Given
$Y\subseteq X$ with $\bar a\in Y^n$ and a quantifier-free
type $n$-type $p$ over $\bar a$ we say that \textit{$p$ is
  realized in $Y$} if there is $\bar b\in Y^n$ such that
$\bar b\models p$. Also, abusing notation, if $Y\subseteq
X^k$, then we say that \textit{$p$ is realized in $Y$} if if
there is $\bar b\in Y$ such that $\bar b\models p$. The
\textit{quantifier-free type of a tuple} $\bar b$ over $\bar
a$, denoted by $\mtp(\bar b\slash\bar a)$ is then the set of
all quantifier-free formulas $\varphi(\bar x,\bar a)$ such
that $X\models\varphi(\bar b,\bar a)$. If $\bar a$ is the
empty tuple, then we write $\mtp(\bar b)$ for $\mtp(\bar
b\slash\bar a)$. A quantifier-free type is
\textit{consistent} if there is a model that realizes it,
and a consistent quantifier-free type $p$ is
\textit{complete} if whenever $p\subseteq q$ and $q$ is a
consistent quantifer-free type, then $p=q$.

\begin{definition}
  Given $n\in\N$, a quantifier-free $n$-type $p$ over $\bar
  a=(a_1,\ldots,a_n)$ and $\varepsilon>0$, say that $p$ is
  an \textit{$\varepsilon$-quantifier-free $n$-type over
    $\bar a$} if $\mtp(\bar a)\subseteq p$ and
  $d(x_i,a_i)=\varepsilon_i$ belongs to $p$ for each $i\leq
  n$ and for some $0\leq \varepsilon_i<\varepsilon$.
\end{definition}

Given a metric structure $X$, $k\in\N$ and a complete
quantifier-free $k$-type $p$ write $p(X)=\{\bar a\in X^k:
\bar a\models p\}$ and note that $p(X)\subseteq X^k$ is
$G_\delta$ (closed if there are no relation symbols) in the
topology of $X^k$, so if $X$ is complete, then $p(X)$
becomes a Polish space.

Given three tuples $\bar a,\bar b$ and $\bar c$ in a metric
structure $X$, write $\bar b\equiv_{\bar a} \bar c$ if
$\mtp(\bar b\slash\bar a)=\mtp(\bar c\slash \bar a)$. Also,
write $\bar b \equiv \bar c$ to denote $\bar
b\equiv_\emptyset\bar c$. If $X$ is a homogeneous metric
space, then the former is equivalent to the fact that there
is $g\in \iso(X)$ with $g\restriction \bar a=\id$ and
$g(\bar c)=\bar b$.

\begin{definition}\label{def:sat}
  Given a metric structure $X$, $k\in\N$, a tuple $\bar a\in
  X^k$ with $p=\mtp(\bar a)$ and $\varepsilon>0$, say that a
  subset $Y\subseteq p(X)$ is \textit{relatively
    $\varepsilon$-saturated over $\bar a$} if every
  $\varepsilon$-quantifier-free $k$-type over $\bar a$ which
  is realized in $X$, is also realized in $Y$.
\end{definition}

Note that if $Y$ is relatively $\varepsilon$-saturated over
$\bar a$, then in particular, $Y$ contains $\bar a$. Given
two tuples $\bar a,\bar b\in X^m$ and $\varepsilon>0$ write
$d_X(\bar a,\bar b)<\varepsilon$ if
$d_X(a_k,b_k)<\varepsilon$ for every $k\leq m$.

\begin{definition}
  Suppose $X$ is a homogeneous metric structure and $\bar
  a\in X^k$ for some $k\in\N$. Write $p$ for $\mtp(\bar
  a)$. Say that a sequence $(\bar a_n:n\in\N)$ of elements
  of $X^k$ is an \textit{isolated sequence in $p$} if every
  $\bar a_n$ realizes $p$ and there exists a sequence of
  $\varepsilon_n>0$ and subsets $Y_n\subseteq p(X)$ such
  that for every $n\in\N$ the set $Y_n$ is relatively
  $\varepsilon_n$-saturated over $\bar a_n$ and for every
  sequence $\bar b_n\in Y_n$ such that $\mtp(\bar
  b_n)=\mtp(\bar a_n)$ and $d_X(\bar a_n,\bar
  b_n)<\varepsilon_n$ there is an automorphism $\varphi$ of
  $X$ with $\varphi(\bar a_n)=\bar b_n$ for every $n\in\N$.
\end{definition}

The following definition will be generalized in Definition
\ref{def:misolated} below.

\begin{definition}\label{def:admits1}
  Say that a metric structure $X$ \textit{admits isolated
    sequences} if for every $k\in\N$, every complete
  quantifier-free $k$-type $p$ realized in $X$, for every
  nonmeager set $Z\subseteq p(X)$ there is a sequence $(\bar
  a_n:n\in\N)$ which is isolated in $p$ and $\bar a_n\in Z$
  for every $n$.
\end{definition}

The above definitions are enough for the study of the
Urysohn space and the measure algebra but in order to deal
with the Hilbert space we need to introduce slightly
more general notions.

\begin{definition}\label{def:rel.loc.sat}
  Given a metric structure $X$, $k\in\N$, $\varepsilon>0$
  and tuple $\bar a\in X^k$ write $p=\mtp(\bar a)$. Suppose
  $T\subseteq p(X)$. Say that $Y\subseteq X^k$ is
  \textit{$T$-relatively $\varepsilon$-saturated over $\bar
    a$} if for every $\bar b\in T$ with $d_X(\bar b,\bar
  a)<\varepsilon$ there is $\bar b'\in Y$ such that
  $\mtp(\bar b\slash\bar a)=\mtp(\bar b'\slash\bar a)$.
\end{definition}

\begin{definition}
  Suppose $X$ is a metric structure, $\varepsilon>0$,
  $k,m\in\N$, $\bar a\in X^k$ and $p=\mtp(\bar a)$. Say that
  a subset $T\subseteq p(X)$
  \textit{$(m,\varepsilon)$-generates an open set over $\bar
    a$} if there is a nonempty open set $U\subseteq p(X)$
  such that for every $\bar b\in U$ there is a sequence
  $g_1,\ldots,g_m\in\aut(X)$ such that
  \begin{itemize}
  \item $g_m\ldots g_1(\bar a)=\bar b$,
  \item $g_i(\bar a)\in T$ and $d_X(g_i(\bar a),\bar
    a)<\varepsilon$ for each $i\leq m$.
  \end{itemize}
\end{definition}

Note that, in particular, if $T$ contains an open ball of
radius $\varepsilon$ around $\bar a$ (i.e. $\{\bar b\in
p(X):d_X(\bar b,\bar a)<\varepsilon\}$), then it
$(1,\varepsilon)$-generates an open set over $\bar
a$. Therefore, the following definition is a generalization
of Definition \ref{def:sat}.

\begin{definition}
  Suppose $X$ is a metric structure, $\varepsilon>0$,
  $k,m\in\N$, $\bar a\in X^k$ and $p=\mtp(\bar a)$. Say that
  $Y\subseteq p(X)$ is \textit{$m$-relatively
    $\varepsilon$-saturated over $\bar a$} if there is
  $T\subseteq p(X)$ such that
  \begin{itemize}
  \item $Y$ is $T$-relatively $\varepsilon$-saturated over
    $\bar a$,
  \item $T$ $(m,\varepsilon)$-generates an open set over
    $\bar a$.
  \end{itemize}
\end{definition}

Thus, if $Y$ is relatively $\varepsilon$-saturated over
$\bar a$, then it is $1$-relatively $\varepsilon$-saturated
over $\bar a$.

\begin{definition}\label{def:misolated}
  Suppose $X$ is a homogeneous metric structure and $\bar
  a\in X^k$ for some $k\in\N$. Write $p$ for $\mtp(\bar
  a)$. Say that a sequence $(\bar a_n:n\in\N)$ of elements
  of $X^k$ is a \textit{weakly isolated sequence in $p$} if
  every $\bar a_n$ realizes $p$ and there exists $m\in\N$
  and a sequence of $\varepsilon_n>0$ and subsets
  $Y_n\subseteq p(X)$ such that for every $n\in\N$ the set
  $Y_n$ is $m$-relatively $\varepsilon_n$-saturated over
  $\bar a_n$ and for every sequence $\bar b_n\in Y_n$ such
  that $\mtp(\bar b_n)=\mtp(\bar a_n)$ and $d_X(\bar
  a_n,\bar b_n)<\varepsilon_n$ there is an automorphism
  $\varphi$ of $X$ with $\varphi(\bar a_n)=\bar b_n$ for
  every $n\in\N$. Given $m$ as above we will also say that
  the sequence is \textit{$m$-weakly isolated}.
\end{definition}

\begin{definition}\label{def:admits}
  Say that a metric structure $X$ \textit{admits weakly
    isolated sequences} if there is $m\in\N$ such that for
  every $k\in\N$, every complete quantifier-free $k$-type
  $p$ realized in $X$, for every nonmeager set $Z\subseteq
  p(X)$ there is a sequence $(\bar a_n:n\in\N)$ which is
  $m$-weakly isolated in $p$ and $\bar a_n\in Z$ for every
  $n$. Given $m$ as above we will also say that the
  structure admits $m$-weakly isolated sequences.
\end{definition}

Now, Definition \ref{def:admits1} is a special case of
Definition \ref{def:admits} since an isolated sequence is
obviously weakly isolated.

\section{Metric structures}\label{sec:metric.structures}

Automatic continuity for automorphism groups of metric
structures will depend on the model-theoretic properties of
the structure. The key definitions, which stem from the
analysis of the work of Kechris and Rosendal \cite{kr} are
given below. 

Below, and throughout of this paper, we use the convention
that a finitely generated substructure of a metric structure
is always enumerated (a finitely generated substructure is a
tuple if there are no function symbols).

\begin{definition}
  Let $M$ be a metric structure, $B,C\subseteq M$ be
  finitely generated substructures. Given a finitely
  generated substructure $A\subseteq B\cap C$ say that $B$
  and $C$ are \textit{independent over} $A$ and write
  $$B\ind_A C$$ if for every pair of automorphisms
  $\varphi:B\to B$, $\psi:C\to C$ such that $A$ is closed
  under $\varphi$ and $\psi$ and $\varphi\restriction
  A=\psi\restriction A$, the function $\varphi\cup\psi$
  extends to an automorphism of the substructure generated
  by $B$ and $C$.
\end{definition}

An abstract notion of stationary independence has been
considered by Tent and Ziegler in \cite{tent.ziegler}. In
general, the above notion is not a stationary independence
relation in the sense of \cite[Definition 2.1]{tent.ziegler}
and satisfies only the Invariance and Symmetry conditions
(see also the remarks \cite[Example
2.2]{tent.ziegler}). However, in all concrete cases, the
examples of independence relation considered in this paper
will be the same as in \cite{tent.ziegler}. The following is
motivated by a standard property of the independence
relation in stable theories (see \cite[Theorem
14.12]{model.theory}).

\begin{definition}
  Say that a metric structure $M$ has the \textit{extension
    property} if for every pair $B,C\subseteq M$ of finitely
  generated substructures and a finitely generated
  substructure $A\subseteq B\cap C$ there is a finitely
  generated substructure $C'\subseteq M$ with $C'\equiv_A C$
  such that $B\ind_A C'$.
\end{definition}

Another property of the metric structures that we will need
for automatic continuity is connected with the extension
theorems proved by Hrushovski \cite{hrushovski}, Herwig and
Lascar \cite{herwig.lascar} and Solecki \cite{sol.iso}.

\begin{definition}
  Say that a metric structure $M$ \textit{has locally finite
    automorphisms} if for every $n\in\N$, for any finitely
  generated substructure $N$ of $M$, for any partial
  automorphisms $\varphi_1,\ldots,\varphi_n$ of $N$, there
  is a finitely generated structure $N'$ of $M$ containing
  $N$ such that every $\varphi_i$ extends to an automorphism
  of $N'$, for each $i\leq n$.
\end{definition}

Note that if finitely generated substructures of $M$ are finite
(e.g. when there are no function symbols for functions from
$M^m$ to $M$), then $M$ has locally finite automorphisms if
and only if for any finite substructure $N$ of $M$ there is
is a finite structure $N'$ of $M$ containing $N$ such that
every isomorphism between finite substructures of $N$
extends to an automorphism of $N'$.

\section{First order logic for metric structures}\label{sec:continuous.logic}

Similarly as in the classical case, the $\elomega$-formulas
in first-order logic for metric structures are formed by
allowing countable infinite conjunctions and disjunctions of
first order formulas as well as finite quantification. Note
that the formula $\mtp(\bar x)=\mtp(\bar y)$ belongs to
$\elomega$.

A property $\Phi$ of a metric structures is called a
\textit{first-order property} if there is a set $\tilde\Phi$
of $\elomega$-sentences such that a metric structure $M$
satisfies $\Phi$ if and only if $M\models\phi$ for every
$\phi\in\tilde\Phi$. Note that if $\phi$ is an
$\elomega$-sentence and $N\prec M$, then we have that
$N\models\phi$ if and only if $M\models\phi$.

\begin{lemma}[L\"owenheim--Skolem]\label{lowenheim}
  Suppose $M$ is a metric structure and for each $\bar
  a,\bar b\in M^{<\omega}$ let $\varphi_{\bar a\bar b}$ be
  an automorphism of $M$. For every countable $M_0\subseteq
  M$ there is a countable metric structure $N\subseteq M$
  with $M_0\subseteq N$, such that $N\prec M$ and $N$ is
  closed under $\varphi_{\bar a\bar b}$ for each $\bar
  a,\bar b\in N^{<\omega}$.
\end{lemma}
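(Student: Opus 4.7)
The plan is to adapt the standard downward L\"owenheim--Skolem argument via Skolem hulls, modified to additionally close under the given family of automorphisms $\varphi_{\bar a\bar b}$. Since the logic here is genuinely first-order with truth values in $\{0,1\}$ (quantification only over the first sort, finitely many function and relation symbols of finite arity), the Tarski--Vaught criterion applies verbatim: $N\prec M$ iff for every first-order formula $\varphi(x,\bar y)$ and every $\bar a\in N^{<\omega}$, if $M\models\exists x\,\varphi(x,\bar a)$ then there is some $c\in N$ with $M\models\varphi(c,\bar a)$.

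First I would fix, for each first-order formula $\varphi(x,\bar y)$ in our signature, a Skolem function $s_\varphi$ defined on tuples of the appropriate arity in $M^{<\omega}$, so that $M\models\varphi(s_\varphi(\bar a),\bar a)$ whenever $M\models\exists x\,\varphi(x,\bar a)$, and arbitrary otherwise. There are only countably many such formulas. Then I would define an increasing chain of countable subsets $N_0\subseteq N_1\subseteq\ldots\subseteq M$ by setting $N_0=M_0$ and letting $N_{k+1}$ be the substructure of $M$ generated (under the finitely many function symbols of $M$) by
\[
N_k\cup\{s_\varphi(\bar a):\varphi\text{ a formula},\ \bar a\in N_k^{<\omega}\}\cup\{\varphi_{\bar a\bar b}(c):\bar a,\bar b\in N_k^{<\omega},\ c\in N_k\}.
\]
Since each function symbol has finite arity and there are countably many of them, and since only countably many Skolem witnesses and automorphism images are added at each stage, each $N_k$ remains countable. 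Finally set $N=\bigcup_{k\in\N} N_k$, which is a countable substructure of $M$ containing $M_0$.

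It remains to verify the two required properties. For $N\prec M$: given any first-order formula $\varphi(x,\bar y)$ and $\bar a\in N^{<\omega}$ with $M\models\exists x\,\varphi(x,\bar a)$, pick $k$ with $\bar a\in N_k^{<\omega}$; then $s_\varphi(\bar a)\in N_{k+1}\subseteq N$ is a witness in $N$, so the Tarski--Vaught test is satisfied. For closure under the automorphisms: if $\bar a,\bar b\in N^{<\omega}$ and $c\in N$, then all of $\bar a,\bar b,c$ lie in some common $N_k$, whence $\varphi_{\bar a\bar b}(c)\in N_{k+1}\subseteq N$.

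There is no real obstacle; the argument is the textbook Skolem hull iteration. The only minor points worth keeping an eye on are (i) ensuring that the substructure generated by a countable set in our two-sorted signature stays countable, which is immediate from the finite arities and the discrete treatment of the $\R$-sort in the syntax, and (ii) that ``$\elomega$'' plays no role in the statement (only first-order elementarity is asserted), so countable conjunctions do not need to be handled by the Skolem hull.
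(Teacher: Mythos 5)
Your proof is correct and is essentially the same iterative construction as the paper's: the paper builds a chain of countable elementary substructures $M_n\prec M$, using the standard Löwenheim--Skolem theorem as a black box at each step and interleaving closure under the $\varphi_{\bar a\bar b}$, while you unfold the Skolem-hull mechanism explicitly and close under both Skolem witnesses and the automorphisms at each stage; both yield the same union $N$ verified elementary via Tarski--Vaught. One tiny imprecision: the paper's signature may have countably infinitely many symbols $f_1,f_2,\ldots$, not finitely many as your parenthetical suggests, but that does not affect countability of the hull or any other step.
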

\begin{proof}
  The standard L\"owenheim--Skolem argument shows that there
  is a countable $M_1$ with $M_0\subseteq M_1$ and $M_1\prec
  M$. Construct a chain of countable first-order elementary
  substructures $M_n\prec M$ with $M_n\subseteq M_{n+1}$
  such that $M_{n+1}$ is closed under $\varphi_{\bar a \bar
    b}$ for every $\bar a,\bar b\in M_n^{<\omega}$. Write
  $N=\bigcup_n M_n$. Then $N$ is as needed.
\end{proof}

\begin{lemma}\label{locally.finite.lemma}
  The property saying that a metric structure has locally
  finite automorphisms is a first-order property for
  homogeneous metric structures.
\end{lemma}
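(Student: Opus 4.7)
The plan is to exhibit a countable family $\tilde\Phi\subset\elomega$ such that a homogeneous metric structure $M$ has locally finite automorphisms if and only if $M\models\phi$ for every $\phi\in\tilde\Phi$. The family will be indexed by $n,k\in\N$ together with a sequence $(\bar\alpha_i,\bar\beta_i)_{i\leq n}$ of pairs of term-tuples in the $k$ free variables $\bar x=(x_1,\ldots,x_k)$, where $\bar\alpha_i$ and $\bar\beta_i$ have matching length. Since the language is countable, there are only countably many such indices.

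For each such index I take the sentence
\[
\psi\;:=\;\forall\bar x\Bigl[\bigwedge_{i\leq n}\mtp(\bar\alpha_i(\bar x))=\mtp(\bar\beta_i(\bar x))\;\longrightarrow\;\bigvee_{l\in\N}\exists y_1\cdots y_l\bigwedge_{i\leq n}E_i(\bar x,\bar y)\Bigr],
\]
where $E_i(\bar x,\bar y)$ abbreviates
\[
\exists\bar x'\bar y'\bigl(\mtp(\bar x\bar y)=\mtp(\bar x'\bar y')\wedge\bar\alpha_i(\bar x')=\bar\beta_i(\bar x)\wedge\bar x'\bar y'\subseteq\langle\bar x\bar y\rangle\wedge\bar x\bar y\subseteq\langle\bar x'\bar y'\rangle\bigr).
\]
Each clause ``$z\in\langle\bar u\rangle$'' unfolds into the countable disjunction $\bigvee_t z=t(\bar u)$ over terms $t$ in the coordinates of $\bar u$, and the quantifier-free type equalities are $\elomega$-formulas by the remark preceding the lemma, so $\psi$ is a legitimate $\elomega$-sentence.

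Verification proceeds in two passes. Assuming $M$ has locally finite automorphisms, fix $\bar c$ satisfying the hypothesis of $\psi$; then the assignment $\bar\alpha_i(\bar c)\mapsto\bar\beta_i(\bar c)$ extends uniquely to an isomorphism of the corresponding finitely generated substructures, i.e.\ to a partial automorphism $\varphi_i$ of $N=\langle\bar c\rangle$. Applying the hypothesis yields a finitely generated $N'\supseteq N$ and $\Phi_i\in\aut(N')$ with $\Phi_i\supseteq\varphi_i$; choosing $\bar e$ with $N'=\langle\bar c,\bar e\rangle$ and setting $(\bar x',\bar y'):=(\Phi_i(\bar c),\Phi_i(\bar e))$ produces the required witnesses for each $E_i$. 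Conversely, starting from a finitely generated $N=\langle\bar c\rangle$ with partial automorphisms $\varphi_i$ represented by term-tuples $(\bar\alpha_i,\bar\beta_i)$, apply $\psi$ to $\bar c$ to extract $\bar e$ and witness pairs $(\bar x'_i,\bar y'_i)$; the map sending $(\bar c,\bar e)$ to $(\bar x'_i,\bar y'_i)$ extends to an automorphism of $N'=\langle\bar c,\bar e\rangle$ which extends $\varphi_i$, because matching quantifier-free types on a generating tuple uniquely determine an isomorphism of the generated substructures, and the two inclusion clauses in $E_i$ force $\langle\bar x'_i,\bar y'_i\rangle=\langle\bar c,\bar e\rangle$.

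The main technical obstacle is establishing $\elomega$-expressibility of the auxiliary clauses, and in particular of ``$z\in\langle\bar u\rangle$'' when the language contains function symbols, since the substructure $\langle\bar u\rangle$ can be infinite; this is handled by the countable-disjunction expansion over terms noted above, admissible precisely because the language is countable. Homogeneity of $M$ enters implicitly to ensure that the syntactic data of matching quantifier-free types faithfully captures the semantic notion of a partial automorphism of $N$ appearing in the definition of locally finite automorphisms.
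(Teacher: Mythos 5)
Your proof is correct and follows essentially the same strategy as the paper: encode the locally finite automorphism property as a countable family of $\elomega$-sentences that universally quantify over a generating tuple of a finitely generated substructure, assert via quantifier-free type equalities that certain maps are partial automorphisms, and then existentially assert (through a countable disjunction over the length of the witness tuple, plus countable disjunctions expressing substructure membership) the existence of a finitely generated extension with automorphisms extending them. The only notable difference is cosmetic: you parameterize the partial automorphisms by term-tuples $\bar\alpha_i,\bar\beta_i$ in the index and use an unbounded disjunction $\bigvee_{l\in\N}$, whereas the paper indexes by the quantifier-free type $p$ of the generators and bounds the disjunction by a constant $n(p,k)$ that it argues exists by homogeneity; since $\elomega$ permits countable disjunctions, your unbounded version sidesteps the need for that uniform bound.
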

\begin{proof}
  For each $n\in\N$ write $$x\in\langle
  x_1,\ldots,x_n\rangle$$ for the $\elomega$-formula (in
  variables $x,x_1,\ldots,x_n$) saying that $x$ belongs to
  the substructure generated by $x_1,\ldots,x_n$. The
  formula is of the form
  $\bigvee_{k\in\N}x=g_k(x_1,\ldots,x_n)$ where $g_k$
  enumerate all compositions of function symbols in the
  language. We also write
  $y_1,\ldots,y_m\in\langle x_1,\ldots,x_n\rangle$ for
  $\bigwedge_{i\leq m}y_i\in\langle x_1,\ldots,x_n\rangle$.

  Note that if a homogeneous metric structure $M$ has
  locally finite automorphisms and $N$ is its finitely
  generated substructure, say by $x_1,\ldots,x_n$ and
  $k\in\N$, then there is a number $n(p,k)\in\N$, depending
  only on the quantifier-free type $p$ of $x_1,\ldots,x_n$
  and $k$ such that any for any substructure $N_1$
  isomorphic to $N$ in $M$, and any set of partial
  automorphisms $\varphi_1,\ldots,\varphi_k$ of $N_1$ there
  is a substructure $N_1'$ of $M$ containing $N_1$ and
  generated by $m\leq n(p)$ elements such that every
  $\varphi_i$ extends to an automorphism of $N_1'$.

  Thus, a homogeneous metric structure $M$ has locally
  finite automorphisms if and only if it satisfies the
  following $\elomega$-sentences, for every $n,k\in\N$,
  quantifier-free $n$-type $p$ and every $n_1,\ldots,n_k\leq
  n$. 

 \begin{eqnarray*}
   \forall x_1,\ldots,x_n\quad [\cmtp(x_1,\ldots,x_n)=p]\Rightarrow\\
   \forall y_1^1,\ldots,y^1_{n_1},z^1_1,\ldots,z^1_{n_1}
   y_1^2,\ldots,y^2_{n_2},z^2_1,\ldots,z^2_{n_2},
   \ldots,
   y_1^k,\ldots,y^k_{n_k},z^k_1,\ldots,z^k_{n_k}\\
   \bigg[\bigwedge_{i=1}^k\big(\bigwedge_{l=1}^{n_i}
   y_l^i,z_l^i\in\langle x_1,\ldots,x_n\rangle\big)
   \,\wedge\,\mtp(y_1^i,\ldots,y^i_{n_i})=\mtp(z^i_1,\ldots,z^i_{n_i})\bigg]\\
   \Rightarrow\quad\bigg[\bigvee_{n\leq m\leq n(p,k)}\exists x_1',\ldots,x_m'\quad
   x_1'=x_1\wedge\ldots\wedge x_n'=x_n\\
   \bigwedge_{j\leq k}\,\exists x_1^k,\ldots,x_m^k\in\langle
   x_1',\ldots,x_m'\rangle\quad \bigg(x_1',\ldots,x_m'\in\langle x_1^k,\ldots,x_m^k\rangle\\
   \wedge\quad
   \mtp(x_1',\ldots,x_m',y_1^k,\ldots,y_{n_k}^k)=\mtp(x_1^k,\ldots,x_m^k,z_1^k,\ldots,z_{n_k}^k)\bigg)\bigg]
  \end{eqnarray*}
\end{proof}

\begin{lemma}\label{independence.lemma}
  The extension property is a first-order property for
  metric structures.
\end{lemma}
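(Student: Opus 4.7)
The plan is to mimic Lemma~\ref{locally.finite.lemma}: for each combinatorial datum describing how a finitely generated substructure $A$ can sit inside finitely generated substructures $B, C$, write an $\elomega$-sentence encoding the extension property for that datum, and let $\tilde\Phi$ be the countable collection of all such sentences.

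First, I would reuse the auxiliary formula $x \in \langle x_1, \ldots, x_n \rangle$ (the countable disjunction $\bigvee_k x = g_k(x_1, \ldots, x_n)$ from Lemma~\ref{locally.finite.lemma}) together with its tuple variant. The key elementary observation is that the assertion ``the map $\bar x \mapsto \bar x^*$ extends to an automorphism of the substructure $\langle \bar x \rangle$'' is captured by the $\elomega$-formula
$$\bar x^* \in \langle \bar x \rangle \;\wedge\; \bar x \in \langle \bar x^* \rangle \;\wedge\; \mtp(\bar x) = \mtp(\bar x^*),$$
since the last conjunct delivers an isomorphism $\langle \bar x \rangle \to \langle \bar x^* \rangle$, while the first two force it to be an automorphism of $\langle \bar x \rangle$.

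Next, I would encode the independence relation. Fix arities $n_A, n_B, n_C$ and terms $g_{k_i}, h_{l_i}$ for $i \leq n_A$, and set $\bar a = (g_{k_i}(\bar x))_i$; the datum is \emph{compatible} if $g_{k_i}(\bar x) = h_{l_i}(\bar y)$ for each $i$. Then $\langle \bar x \rangle \ind_{\langle \bar a \rangle} \langle \bar y \rangle$ is expressed by the universal $\elomega$-formula saying that whenever $\bar x^*, \bar y^*$ jointly satisfy the automorphism condition above (separately for $\bar x$ and for $\bar y$) and in addition $g_{k_i}(\bar x^*) \in \langle \bar a \rangle$ together with $g_{k_i}(\bar x^*) = h_{l_i}(\bar y^*)$ for every $i \leq n_A$, one has $\mtp(\bar x, \bar y) = \mtp(\bar x^*, \bar y^*)$. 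The side conditions capture that $A$ is closed under both automorphisms and that they agree on $A$, while the type-equality conclusion, together with the generation conditions on $\bar x^*, \bar y^*$, produces the automorphism of $\langle \bar x \cup \bar y \rangle$ extending $\varphi \cup \psi$ required by the definition of $\ind$.

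Finally, for each compatible datum together with a quantifier-free type $p_{BC}$ containing the equations $g_{k_i}(\bar x) = h_{l_i}(\bar y)$, I would write
$$\forall \bar x\, \bar y \;\Bigl[\mtp(\bar x, \bar y) = p_{BC} \;\Rightarrow\; \exists \bar y' \,\bigl(\mtp(\bar a, \bar y') = \mtp(\bar a, \bar y) \,\wedge\, \langle \bar x \rangle \ind_{\langle \bar a \rangle} \langle \bar y' \rangle\bigr)\Bigr],$$
with $\bar a = (g_{k_i}(\bar x))_i$ and the independence clause interpreted via the $\elomega$-formula from the previous step. The resulting countable family of $\elomega$-sentences forms the desired $\tilde\Phi$, since every configuration $A \subseteq B \cap C$ of finitely generated substructures in $M$ is described by some such datum. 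The main obstacle is essentially bookkeeping: carefully tracking the indexing terms $g_{k_i}, h_{l_i}$ that record the embedding of $A$ into $B$ and $C$. Once this is done, each ingredient reduces to type-equalities and term-membership in finitely generated substructures, both of which are $\elomega$-expressible by design.
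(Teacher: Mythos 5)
Your proposal is correct and captures the right idea: encode the extension property as a collection of $\elomega$-sentences quantifying over generating tuples, expressing independence via quantifier-free type equalities. But you take a genuinely different encoding route from the paper's own proof, and the comparison is worth spelling out.

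The paper indexes its sentences only by three numbers $(n,m,k)$, places $A$ as the shared initial segment $x_1=y_1,\ldots,x_k=y_k$ of the generating tuples $\bar x,\bar y$, and encodes an automorphism of $\langle\bar x\rangle$ as a \emph{permutation} $\sigma\in S_n$ of the generating tuple with $\mtp(\bar x)=\mtp(\bar x_\sigma)$. This is short, but it only captures automorphisms that restrict to a permutation of the chosen generators; for relational structures (Urysohn space, measure algebra) every automorphism of a finite substructure is of this form, whereas once there are nontrivial function symbols (the Hilbert space) a finitely generated substructure is infinite and the approach requires extra care. Your encoding avoids this entirely: an automorphism of $\langle\bar x\rangle$ is represented as any $\bar x^*$ with $\bar x^*\in\langle\bar x\rangle$, $\bar x\in\langle\bar x^*\rangle$ and $\mtp(\bar x)=\mtp(\bar x^*)$, and the embedding of $A$ in $B$ and $C$ is tracked via explicit terms $g_{k_i},h_{l_i}$. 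This is more verbose but handles function symbols uniformly and is logically tighter; it is closer in spirit to the paper's own treatment of locally finite automorphisms in Lemma~\ref{locally.finite.lemma}.

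One small but real flaw in the write-up: you parametrize your sentences additionally by a complete quantifier-free type $p_{BC}$ and then claim the resulting family is countable. It is not — there are uncountably many such types — and, moreover, one cannot in general write $\mtp(\bar x,\bar y)=p_{BC}$ as a single $\elomega$-formula. Fortunately this parametrization is unnecessary: the only part of $p_{BC}$ your argument uses is the finite conjunction of equations $\bigwedge_i g_{k_i}(\bar x)=h_{l_i}(\bar y)$, which can be placed directly in the antecedent. With that replacement the family is indexed by tuples of terms and arities, hence genuinely countable, and the proof goes through cleanly.
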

\begin{proof}
  The extension property is the conjunction of the following
  sentences, for all $n,m\in\N$ and $k\leq\min(n,m)$. Below,
  for a tuple $\bar x=(x_1,\ldots,x_n)$ and $\sigma\in S_n$
  (the group of permutations of $n$) we write $\bar
  x_\sigma$ for the tuple
  $(x_{\sigma(1)},\ldots,x_{\sigma(n)})$.
 \begin{eqnarray*}
   \forall x_1,\ldots,x_n\quad\forall
   y_1,\ldots,y_m\quad(x_1=y_1\wedge\ldots\wedge
   x_k=y_k)\\
   \Rightarrow\bigg[\exists y_1'\ldots,y_m'
   (y_1'=y_1\wedge\ldots\wedge y_k'=y_k) \wedge \mtp(y_1\ldots
   y_m)=\mtp(y_1'\ldots y_m')\\\wedge\ \bigwedge_{\sigma\in
     S_n}\bigwedge_{\tau\in S_m} \Big(\sigma\restriction k:k\to
   k\quad\wedge\quad \sigma\restriction k=\tau\restriction
   k\\ \big(\mtp(\bar x)=\mtp(\bar x_\sigma)\ \wedge
   \ \mtp(\bar y')=\mtp(\bar y'_\tau)
   \big)
   \Rightarrow \mtp(\bar x,\bar y)=\mtp(\bar
   x_\sigma,\bar y_\tau)\Big)\bigg]
  \end{eqnarray*}
\end{proof}

\begin{corollary}\label{lowenheimcor}
  Suppose $M$ is a homogeneous metric structure which has
  locally finite automorphisms and the extension
  property. Let $\varphi_{\bar a}$ be an automorphism of $M$ for
  each $\bar a\in M^{<\omega}$. Then there is a countable
  homogeneous metric structure $N\subseteq M$ which is dense
  in $M$, has locally finite automorphisms, the extension
  property and is closed under all automorphisms
  $\varphi_{\bar a}$ for $\bar a\in N^{<\omega}$.
\end{corollary}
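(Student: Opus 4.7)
The plan is to reduce to Lemma \ref{lowenheim} after enlarging the prescribed family of automorphisms so as to also witness homogeneity. Since $M$ is homogeneous, for every pair $\bar a,\bar b\in M^{<\omega}$ with $\mtp(\bar a)=\mtp(\bar b)$ I can fix an automorphism $\psi_{\bar a,\bar b}\in\aut(M)$ with $\psi_{\bar a,\bar b}(\bar a)=\bar b$, taking $\psi_{\bar b,\bar a}$ in the role of the inverse. Choose a countable dense $M_0\subseteq M$ and apply Lemma \ref{lowenheim} to the combined family consisting of the given $\varphi_{\bar a}$ together with all $\psi_{\bar a,\bar b}$. This yields a countable $N\subseteq M$ with $M_0\subseteq N$, $N\prec M$, and $N$ closed under every $\varphi_{\bar a}$ and every $\psi_{\bar a,\bar b}$ for $\bar a,\bar b\in N^{<\omega}$.

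Density of $N$ in $M$ and closure under the $\varphi_{\bar a}$ are immediate from the construction. For homogeneity of $N$, let $A,B\subseteq N$ be finitely generated substructures and let $\phi:A\to B$ be an isomorphism. By our convention, $A$ and $B$ are enumerated, say by tuples $\bar a,\bar b\in N^{<\omega}$, and $\phi$ is determined by $\bar a\mapsto\bar b$; in particular $\mtp(\bar a)=\mtp(\bar b)$. Since $N$ is closed under both $\psi_{\bar a,\bar b}$ and $\psi_{\bar b,\bar a}$, the restriction $\psi_{\bar a,\bar b}\!\restriction\! N$ is a bijection of $N$, hence an automorphism of $N$, and it extends $\phi$ by the choice of $\psi_{\bar a,\bar b}$.

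The remaining two properties are now handed to us by Lemmas \ref{locally.finite.lemma} and \ref{independence.lemma}: each is a first-order property (the first for homogeneous structures, which we have just verified of $N$), and therefore the $\elomega$-sentences witnessing these properties in $M$ pass to $N$ by $N\prec M$. The only delicate point in the whole argument is the homogeneity step, where one must be careful to include the ``inverse'' automorphisms $\psi_{\bar b,\bar a}$ in the family fed to Lemma \ref{lowenheim} so that $\psi_{\bar a,\bar b}\!\restriction\!N$ really is a bijection of $N$ rather than just a monomorphism into $N$; this is a standard bookkeeping feature of L\"owenheim--Skolem constructions for homogeneity.
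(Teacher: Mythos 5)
Your proof is correct and follows essentially the same route as the paper: both enlarge the given family of automorphisms with ones witnessing homogeneity, feed the combined family to Lemma \ref{lowenheim}, and then transfer the extension property and locally finite automorphisms to $N$ via Lemmas \ref{independence.lemma} and \ref{locally.finite.lemma} using $N\prec M$. Your insistence on including $\psi_{\bar b,\bar a}=\psi_{\bar a,\bar b}^{-1}$ so that $\psi_{\bar a,\bar b}\!\restriction\!N$ is genuinely a bijection of $N$ is a careful handling of a point the paper's proof passes over silently.
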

\begin{proof}
  Let $M_0$ be countable dense in $M$ and for each $\bar
  a,\bar b\in M^{<\omega}$ which generate isomorphic
  substructures, let $\varphi_{\bar a\bar b}$ be an
  automorphism of $M$ which maps $\bar a$ to $\bar b$. If
  $\bar a,\bar b\in M^{<\omega}$ are of different
  cardinality or do not generate isomorphic substructures,
  then put $\varphi_{\bar a\bar b}=\varphi_{\bar a}$. By
  Proposition \ref{lowenheim}, there is a countable
  substructure $N\prec M$ which contains $N_0$ and is closed
  under all $\varphi_{\bar a\bar b}$ for $\bar a,\bar b\in
  N^{<\omega}$. The latter clearly implies that $N$ is
  homogeneous and closed under $\varphi_{\bar a}$ for $\bar
  a\in N^{<\omega}$. Lemmas \ref{independence.lemma} and
  \ref{locally.finite.lemma} imply that $N$ has the
  extension property and locally finite automorphisms.
\end{proof}

\section{Ample generics}\label{sec:ample.generics}

If a metric structure is countable and has a discrete
metric, then its automorphism group is a subgroup of
$S_\infty$ and for such groups Kechris and Rosendal
\cite{kr} developed machinery for proving automatic
continuity. Recall that a topological group $G$ has
\textit{ample generics} if for every $n\in\N$ there is a
comeager class in the diagonal conjugacy action of $G$ on
$G^n$ (i.e. the action
$g\cdot(g_1,\ldots,g_n)=(gg_1g^{-1},\ldots,gg_ng^{-1})$).

Recall \cite{hjorth} that given a continuous action of a
Polish group $G$ on a Polish space $X$, a point $x\in X$ is
\textit{turbulent} if for every open neighborhood
$U\subseteq G$ of the identity and every open $V\ni x$, the
\textit{local orbit} $O(x,U,V)=\{x'\in X:\exists
g_1,\ldots,g_n\in U \forall i\leq n \ g_i\ldots g_1x\in
U\wedge x'=g_n\ldots g_1x\}$ is somewhere dense. If $G$ is a
subgroup of $S_\infty$, then a point $x\in X$ is turbulent
if and only if for every open subgroup $U\leq G$ the set
$Ux=\{gx:g\in U\}$ is somewhere dense \cite[Proposition
3.2]{kr}. Also, in the case of a continuous action of a
Polish group, if a point is turbulent and has a dense orbit,
then its orbit is actually comeager. This is because in such
case the orbit of a turbulent point cannot be meager and
hence has to be meager in its closure \cite[Proposition
3.2]{kr}.

The groups of automorphisms of metric structures can be
endowed with many topologies and this is the starting point
of the analysis of Ben Yaacov, Berenstein and Melleray
\cite{bybm}, who consider two types of topologies: the
Polish topologies of pointwise convergence and variants of
strong (non separable) topologies. We are, however,
primarily interested in separable topologies on these
groups. By default, the topology on $\aut(M)$ is that of
pointwise convergence with respect to the metric on
$M$. However, if $M$ is countable (but perhaps not complete
with respect to its metric), we will also consider the
topology inherited from the group $S_\infty$, which
coincides with the pointwise convergence topology with
respect to a discrete metric on $M$. If $M$ is countable,
then we refer to this topology by saying that the group
$\aut(M)$ is treated \textit{as a subgroup of
  $S_\infty$}. It can be also viewed as the topology of
pointwise convergence on the automorphism group of the
structure $M$ endowed with a discrete metric and the
original distance function $d_M$ treated as a part of the
structure.

Below, and throughout this paper, we use the following
notation. If $G$ acts on $M$ and $\bar a=(a_1,\ldots,a_n)\in
M^{<\omega}$, then $G_{\bar a}=\{g\in G:\forall i\leq n\ 
g(a_i)=a_i\}$.

\begin{lemma}\label{amplegen}
  Suppose $M$ is a countable homogeneous metric
  structure. If $M$ has locally finite automorphisms and the
  extension property, then $\aut(M)$ has ample generics as a
  subgroup of $S_\infty$.
\end{lemma}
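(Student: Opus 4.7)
\smallskip
\textbf{Proof plan.} Fix $n\in\N$. Since $\aut(M)$ is treated as a subgroup of $S_\infty$, a basic open neighbourhood in $\aut(M)^n$ is specified by an \emph{$n$-system} $(A,\bar\varphi)=(A,\varphi_1,\ldots,\varphi_n)$, consisting of a finitely generated substructure $A\subseteq M$ together with partial automorphisms $\varphi_1,\ldots,\varphi_n$ of $A$; the open set it defines is $\{(g_1,\ldots,g_n)\in\aut(M)^n:g_i\supseteq\varphi_i \text{ for all }i\}$. By the Kechris--Rosendal criterion from \cite{kr}, ample generics for $\aut(M)$ follow once the class $\mathcal{K}_n$ of such $n$-systems, ordered by natural extension, is shown to have the joint embedding property (JEP) and the weak amalgamation property (WAP), for every $n$.

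The first preparatory step is to pass from partial to total automorphisms. Given any $(A,\bar\varphi)\in\mathcal{K}_n$, local finiteness of automorphisms yields a finitely generated $A'\supseteq A$ together with total automorphisms $\varphi'_i$ of $A'$ extending each $\varphi_i$. Hence every element of $\mathcal{K}_n$ embeds into a \emph{total} $n$-system, and it suffices to prove JEP and WAP within the subclass of total $n$-systems.

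For JEP, given total $n$-systems $(B,\bar\varphi)$ and $(C,\bar\psi)$, apply the extension property with $A=\emptyset$ to find a finitely generated $C'\subseteq M$ with $C'\equiv C$ and $B\ind_\emptyset C'$; transport $\bar\psi$ along the isomorphism $C\to C'$ to obtain total automorphisms $\bar\psi'$ of $C'$. The definition of $B\ind_\emptyset C'$ then yields, for each $i\leq n$, an automorphism $\chi_i$ of the substructure $\langle B\cup C'\rangle$ extending $\varphi_i\cup\psi'_i$, and $(\langle B\cup C'\rangle,\bar\chi)$ is the desired joint embedding. For WAP, start from $(A,\bar\varphi)\in\mathcal{K}_n$ and take a total extension $(A',\bar\varphi')$ as above. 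Any two extensions of $(A',\bar\varphi')$ in $\mathcal{K}_n$ can be further enlarged, again via local finiteness, to total extensions $(B,\bar\psi^B)$ and $(C,\bar\psi^C)$. The extension property applied over $A'$ furnishes $C^*\equiv_{A'} C$ in $M$ with $B\ind_{A'}C^*$; since the transported $\bar\psi^{C^*}$ agrees with $\bar\varphi'$ on $A'$, the same agreement holding for $\bar\psi^B$, coordinatewise amalgamation via $\ind_{A'}$ produces an $n$-system on $\langle B\cup C^*\rangle$ that amalgamates both extensions over $(A',\bar\varphi')$, and hence over $(A,\bar\varphi)$.

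JEP delivers a dense orbit of the diagonal conjugation action on $\aut(M)^n$, while WAP, combined with the turbulence characterization for $S_\infty$-subgroups in \cite[Proposition 3.2]{kr}, ensures that every local orbit at the dense orbit is somewhere dense; as noted in the excerpt, a dense turbulent orbit of a continuous action of a Polish group on a Polish space is comeager. The step I expect to be most delicate is the simultaneous coordinatewise amalgamation inside WAP: the extension property produces only one independent copy $C^*$, and one must verify that this single copy suffices to amalgamate all $n$ pairs $(\psi^B_i,\psi^{C^*}_i)$ at once. It is precisely the reduction to total automorphisms via local finiteness that allows the definition of $\ind_{A'}$ to fire uniformly for every coordinate within the single enlargement $\langle B\cup C^*\rangle$.
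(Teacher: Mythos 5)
Your argument is correct, and it takes a genuinely different, more modular route than the paper. The paper's proof does not invoke the Kechris--Rosendal JEP/WAP characterization at all: it fixes $n$, enumerates with infinite repetitions all tuples $(A,\vec\varphi,B,\vec\psi)$ of nested finitely generated substructures with compatible partial automorphism tuples, and directly constructs a generic $(g_1,\ldots,g_n)$ by a back-and-forth argument that builds an increasing chain $D_k$ with automorphisms $\gamma_k^i:D_k\to D_k$. At each step it first applies locally finite automorphisms to pass to total automorphisms of a larger $D_k'$, then applies the extension property to drop in an independent copy $B_k'\ind_{A_k}D_k'$ of the required extension, and uses $\ind_{A_k}$ to glue; the bookkeeping condition $(*)$ is then verified to show the resulting tuple is turbulent with dense orbit. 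Your proof factors the same two tools (local finiteness to totalize, then a single application of the extension property to produce an independent copy across all $n$ coordinates simultaneously) through the Kechris--Rosendal criterion as a black box, proving JEP and WAP of the class $\mathcal{K}_n$ of $n$-systems. The paper's version is self-contained and avoids the need to check that the Kechris--Rosendal machinery, stated in \cite{kr} for Fra\"iss\'e classes of finite structures, transfers verbatim to finitely generated substructures of a countable homogeneous $M$ in a functional signature; your version is shorter and more conceptual but silently relies on that transfer. Your observation at the end about the uniform amalgamation is resolved exactly as you say: the relation $B\ind_{A'}C^*$ is quantified over all compatible pairs of automorphisms, so a single independent copy fires for every coordinate at once, and this is precisely the same maneuver the paper performs inside its induction step.

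One minor point worth making explicit: in your JEP step you take $A=\emptyset$, but in the paper's definition the base of $\ind_A$ is a finitely generated substructure, so you should read $A$ as the substructure generated by the empty set (the constants), which is still what the extension property delivers when invoked with $A\subseteq B\cap C$ trivial. This does not affect the argument.
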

\begin{proof}
  Write $G$ for $\aut(M)$. Fix $n\in\N$ to find a generic
  tuple in $G^n$. Enumerate as $(a_n:n\in\N)$ with infinite
  repetitions all tuples $a=(A,\vec\varphi,B,\vec\psi)$ with
  $\vec\varphi=(\varphi_1,\ldots,\varphi_n)$,
  $\vec\psi=(\psi_1,\ldots,\psi_n)$ such that $A\subseteq B$
  are finitely generated substructures of $M$ (possibly
  generated by the empty set) and for each $i\leq n$ we have
  $\varphi_i\subseteq\psi_i$, and $\varphi_i:A\to A$,
  $\psi_i:B\to B$ are automorphisms.

  By induction on $k$ construct a sequence of increasing
  finitely generated substructures $D_k\subseteq M$ together
  with tuples of increasing automorphisms
  $\vec\gamma_k=(\gamma_k^1,\ldots,\gamma_k^n)$ with
  $\gamma_k^i:D_k\to D_k$ for each $i\leq n$. Using
  back-and-forth and homogeneity make sure that $\bigcup_k
  D_k=M$ and for each $i\leq n$ the function
  $\bigcup_k\gamma_k^i$ is an automorphism of
  $M$. Additionally, for each $k$ make sure that
  \begin{equation}\label{eq:ample}
    \begin{split}
      &\mbox{if } a_k=(A_k,\vec\varphi_k,B_k,\vec\psi_k)
      \mbox{ and }
      A_k\subseteq D_k \mbox{ are such that }\\
      &\vec\varphi_k=(\varphi_k^1,\ldots,\varphi_k^n),\ 
      \vec\psi_k=(\psi_k^1,\ldots,\psi_k^n),\\ &
      \varphi^i_k\restriction A_k:A_k\to A_k,\ 
      \psi^i_k\restriction A_k=\varphi^i_k\restriction A_k \mbox{ and }
      \varphi_k^i\subseteq\gamma_k^i \mbox{ for each } i\leq n,\\
      &\mbox{then there is } g\in G_{A_k} \mbox{ with }
      g\gamma_{k+1}^i g^{-1}\supseteq\psi_k^i \mbox{ for
        each } i\leq n.
    \end{split}\tag{$*$}
  \end{equation}
  At the induction step $k$ first use locally finite
  automorphisms to find $D_k'\supseteq D_k$ such that the
  first $k$-many elements of $M$ are in $D_k'$ and each
  $\varphi_k^i$ extends to an automorphism ${\gamma_k^i}'$
  of $D_k'$. Next, use the extension property to find a
  finitely generated substructure $B_k'$ of $M$ with
  $B_k'\equiv_{A_k}B_k$ such that $B_k'\ind_{A_k} D_k'$. Let
  $g\in G_{A_k}$ witness that $B_k'\equiv_{A_k}B_k$,
  i.e. $g(B_k')=B_k$. Define $D_{k+1}$ to be the
  substructure generated by $D_k'$ and $B_k'$ and for each
  $i\leq n$ define $\gamma_{k+1}^i$ so that
  $\gamma_{k+1}^i\restriction D_k'={\gamma_k^i}'$ and
  $\gamma_{k+1}^i\restriction B_k'=g^{-1}\psi_ig$ and use
  the assumption $B_k'\ind_{A_k} D_k'$ to extend it to
  $D_{k+1}$. Note that $g$ witnesses that (\ref{eq:ample})
  is satisfied at the step $k$.

  After this construction is done, write
  $g_i=\bigcup_k\gamma_k^i$ and note that $g_i$ is an
  automorphism of $M$, for each $i\leq n$. To see that $\vec
  g=(g_1,\ldots,g_n)$ is generic, it is enough to see that
  $\vec g$ is turbulent under the diagonal conjugacy action
  of $\aut(M)$ on $\aut(M)^n$ and has a dense orbit. 

  To see the turbulence, fix an open neighborhood $O$ of the
  identity in $\aut(M)$, and, say, $O=G_{\bar a}$ for a
  finite tuple $\bar a\subseteq M$.  We need to see that
  $O\cdot\vec g$ is somewhere dense. Find $m$ such that
  $\bar a\subseteq D_m$ and write $V_i$ for $\{f\in\aut(M):
  f\restriction D_m=g_i\restriction D_m\}$. Note that
  $g_i\restriction D_m:D_m\to D_m$ is an automorphism for
  each $i\leq n$. We claim that $O\cdot\vec g$ is dense in
  $V_1\times\ldots\times V_n$. To see that, fix a nonempty
  open subset $W\subseteq V_1\times\ldots\times V_n$ and
  without loss of generality assume $W=W_1\times\ldots\times
  W_n$. Since $M$ has locally finite automorphisms, we can
  assume that $W_i=\{f\in\aut(M):f\supseteq\psi_i\}$ for
  each $i\leq n$, where each $\psi_i:B\to B$ is an
  automorphism of a finitely generated substructure $B$ of
  $M$. Note that $\psi_i\supseteq g_i\restriction D_m$ for
  each $i\leq n$.

  We need to see that $(O\cdot\vec g)\cap
  (W_1\times\ldots\times W_n)\not=\emptyset$. Let $k\in\N$
  be such that $a_k=(A_k,\vec\varphi_k,B_k,\vec\psi_k)$ with
  $A_k=D_m$, $B_k=B$, $\vec\varphi_k=(g_1\restriction
  D_m,\ldots,g_n\restriction D_m)$ and
  $\vec\psi=(\psi_1,\ldots,\psi_n)$. By (\ref{eq:ample}) at
  the step $k$, there is $g\in G_{A_k}\subseteq G_{\bar a}$
  such that $gg_ig^{-1}\supseteq\psi_i$ for each $i\leq n$
  and so we are done.

  The fact that the orbit of $\vec g$ is dense follows by
  analogous arguments.
\end{proof}

Recall that a subset $S$ of a group $G$ is called
\textit{countably syndetic} if $G$ can be written as
$\bigcup_n a_nS$ for some sequence $a_n\in G$. A subset $S$
of $G$ is called \textit{symmetric} if $S^{-1}=S$.

\begin{corollary}\label{amplegencor}
  Suppose $M$ is a countable homogeneous metric structure
  and $G=\aut(M)$. If $M$ has locally finite automorphisms
  and the extension property, and $W\subseteq\aut(M)$ is
  symmetric and countably syndetic, then there is a finite
  tuple $\bar a\subseteq M$ such that $G_{\bar a}\subseteq
  W^{10}$.
\end{corollary}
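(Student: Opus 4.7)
The plan is to derive this from Lemma \ref{amplegen} by adapting the Kechris--Rosendal argument that ample generics imply a Steinhaus-type property for symmetric countably syndetic sets in a Polish group (see \cite{kechris.rosendal}).

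By Lemma \ref{amplegen}, the group $G = \aut(M)$, viewed as a subgroup of $S_\infty$, has ample generics. Fix $n = 2$ for concreteness and let $V \subseteq G^2$ be a comeager orbit under the diagonal conjugation action $g \cdot (f_1, f_2) = (gf_1 g^{-1}, gf_2 g^{-1})$. Since $W$ is countably syndetic, writing $G = \bigcup_k a_k W$ gives $G^2 = \bigcup_{k,\ell} (a_k, a_\ell) \cdot (W \times W)$, so $W \times W$ is countably syndetic in $G^2$, hence nonmeager, and therefore $V \cap (W \times W)$ is nonmeager in $G^2$.

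Next, fix a generic pair $(f_1, f_2) \in V$ and consider the orbit map $\pi \colon G \to V$, $g \mapsto (gf_1 g^{-1}, gf_2 g^{-1})$. By Effros' theorem applied to the Polish action of $G$ on $G^2$ (with the orbit $V$ being Polish as a $G_\delta$ subspace), $\pi$ is continuous and open onto $V$, so the preimage $A = \pi^{-1}(V \cap (W \times W))$ is nonmeager in $G$. Concretely, $A$ consists of those $g \in G$ with $gf_i g^{-1} \in W$ for $i = 1, 2$. Comparing two elements $g_1, g_2 \in A$: the ratio $g_2 g_1^{-1}$ satisfies $(g_2 g_1^{-1})(g_1 f_i g_1^{-1})(g_2 g_1^{-1})^{-1} = g_2 f_i g_2^{-1} \in W$, so $g_2 g_1^{-1}$ conjugates one $W$-element to another. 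Using the symmetry $W = W^{-1}$ together with a Pettis-type Baire category argument inside the nonmeager set $A$, one extracts an open neighborhood of the identity contained in a fixed power $W^{10}$. Since the open neighborhoods of the identity in $\aut(M) \leq S_\infty$ are generated by pointwise stabilizers $G_{\bar a}$ of finite tuples $\bar a \subseteq M$, this yields $G_{\bar a} \subseteq W^{10}$ for some such $\bar a$.

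The main obstacle is pinning down the explicit bound of $10$ on the number of $W$-factors. This requires careful bookkeeping in the comparison of conjugates $g_1 f_i g_1^{-1}$ and $g_2 f_i g_2^{-1}$, taking into account the symmetry of $W$ and keeping track of how many $W$-factors are consumed in each step of expressing an arbitrary element of the open subgroup as a product of elements drawn from $W$. If $n = 2$ turns out insufficient for the counting to yield $10$, one applies the argument with ample generics at a slightly larger $n$, which is still available thanks to Lemma \ref{amplegen}.
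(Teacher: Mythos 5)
The paper's ``proof'' of Corollary~\ref{amplegencor} is a one-line citation: it is exactly \cite[Lemma 6.15]{kr}, the Kechris--Rosendal theorem that a Polish group with ample generics has the property that for every symmetric countably syndetic $W$, the set $W^{10}$ is a neighbourhood of the identity. Once Lemma~\ref{amplegen} delivers ample generics for $\aut(M)\le S_\infty$, and once one recalls that a neighbourhood basis at the identity in $S_\infty$ is given by stabilizers $G_{\bar a}$, the statement is immediate. Your attempt to reconstruct the underlying Kechris--Rosendal argument from scratch is therefore a genuinely different route from the paper's.

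Unfortunately, as you yourself note, the reconstruction has gaps, and they are not merely bookkeeping. First, the observation that $g_2g_1^{-1}$ conjugates the $W$-element $g_1f_ig_1^{-1}$ to the $W$-element $g_2f_ig_2^{-1}$ gives no direct constraint on $g_2g_1^{-1}$: an element conjugating one member of $W$ to another need not lie in any fixed power of $W$. So the passage from nonmeagerness of $A$ to ``$A^{-1}A\subseteq W^{10}$'' is missing its key step, not just a numerical constant. Second, a ``Pettis-type'' argument on $A$ requires $A$ to have the Baire property, but $W$ is an arbitrary symmetric countably syndetic set and need not have the Baire property, so neither $W\times W$ nor its preimage $A$ is known to. Kechris and Rosendal handle both issues with a different and more involved decomposition, and the exponent $10$ emerges from tracking that specific decomposition. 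The honest route here, and the one the paper takes, is to appeal directly to \cite[Lemma 6.15]{kr} as an abstract consequence of ample generics, citing Lemma~\ref{amplegen} to verify the hypothesis.
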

\begin{proof}
  This is an abstract consequence of ample generics
  \cite[Lemma 6.15]{kr} and thus follows from Lemma
  \ref{amplegen}.
\end{proof}

\section{Automatic continuity}\label{sec:automatic.continuity}

In \cite{rosendal.solecki} Rosendal and Solecki isolated an
abstract property of a group that implies automatic
continuity. We say that $G$ \textit{is Steinhaus}
(cf. \cite{steinhaus}) if there is a natural number $k\geq
1$ such that for every symmetric countably syndetic set
$S\subseteq G$ the set $S^k=\{s_1\cdot\ldots\cdot
s_k:s_1,\ldots,s_k\in S\}$ contains a nonempty open set in
$G$. In such a case $G$ is also called
\textit{$k$-Steinhaus}. If a group $G$ is Steinhaus, then
$G$ has the automatic continuity property \cite[Proposition
2]{rosendal.solecki}.

We will need the following lemma.

\begin{lemma}\label{elementary}
  Suppose $X$ is a complete homogeneous metric structure
  that has locally finite automorphisms and the extension
  property and let $G=\aut(X)$. If $W\subseteq G$ is
  symmetric and countably syndetic, then there is $\bar a\in
  X^{<\omega}$ such that $G_{\bar a}\subseteq W^{10}$.
\end{lemma}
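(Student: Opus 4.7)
The plan is to reduce to the countable case handled by Corollary~\ref{amplegencor}, by passing to a countable dense elementary substructure of $X$ via Corollary~\ref{lowenheimcor}. Since $W$ is countably syndetic, fix a sequence $(a_n)_{n\in\N}$ in $G$ with $G=\bigcup_n a_nW$. Apply Corollary~\ref{lowenheimcor} to obtain a countable, dense, homogeneous substructure $N\subseteq X$ with locally finite automorphisms and the extension property, arranged so that $N$ is invariant under every $a_n$ and $a_n^{-1}$ (achieved by including these in the prescribed family $\{\varphi_{\bar a}\}$). Since $X$ is complete and $N$ is dense, each automorphism of $N$ extends uniquely by uniform continuity to an automorphism of $X$; let $H\leq G$ be the resulting image subgroup, so that every $a_n\in H$ by construction.

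Set $V:=W\cap H$. This set is symmetric, and it is countably syndetic in $H$: given $h\in H$, pick $n$ with $h\in a_n W$; writing $v=a_n^{-1}h\in W$ and noting that $a_n,h\in H$ forces $v\in H$, hence $v\in V$ and $h\in a_n V$. Applying Corollary~\ref{amplegencor} to $\aut(N)\cong H$ with the set $V$ yields a finite tuple $\bar a\in N^{<\omega}$ such that $H_{\bar a}\subseteq V^{10}\subseteq W^{10}$.

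The remaining step --- and the main obstacle --- is to upgrade the inclusion from $H_{\bar a}$ to the larger group $G_{\bar a}$. Since $H$ is dense but typically meager in $G$ and $W^{10}$ need not be closed, neither a topological closure nor a Baire-category argument applies directly. The plan is instead to handle each individual $g\in G_{\bar a}$ by re-applying the L\"owenheim--Skolem device: given such a $g$, enlarge the prescribed family of automorphisms to include $g$ and $g^{-1}$ (together with $\bar a$ in the starting set), producing by Corollary~\ref{lowenheimcor} a countable substructure $N_g\supseteq N$ containing $\bar a$, with the same good properties, and invariant under $g$. The corresponding subgroup $H_g\leq G$ then contains $g$, and $V_g:=W\cap H_g$ is symmetric and countably syndetic in $H_g$ by the same argument, so Corollary~\ref{amplegencor} applied to $\aut(N_g)$ produces a tuple $\bar a_g$ with $(H_g)_{\bar a_g}\subseteq W^{10}$. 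The crux is to ensure that $g$ actually lies in this stabilizer; this requires a finer inspection of the proof of Corollary~\ref{amplegencor} (based on \cite[Lemma~6.15]{kr}), showing that the support tuple produced there can be prescribed to be (or to extend) any given finite tuple fixed by $g$, in particular the original $\bar a\in\fix(g)$. Once this uniformity is arranged, $g\in(H_g)_{\bar a}\subseteq W^{10}$, completing the argument.
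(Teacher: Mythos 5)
Your outline is right that the proof should pass to a countable dense elementary substructure and invoke Corollary~\ref{amplegencor}, and your verification that $W$ restricts to a symmetric countably syndetic set on the subgroup of automorphisms preserving that substructure is essentially the same calculation as in the paper. But the point you flag as "the remaining step and the main obstacle" is a genuine gap, not a technicality: there is no way to carry out the "finer inspection" you describe. Corollary~\ref{amplegencor} produces \emph{some} tuple $\bar a$ with $\aut(N)_{\bar a}\subseteq W^{10}$; it gives no control over which tuple appears, and in particular you cannot demand that it be contained in $\fix(g)$ for a prescribed $g$. Worse, the countable structure $N_g$ you build after seeing $g$ carries its own group $\aut(N_g)$, which is not comparable with $\aut(N)$, so there is no reason the tuple $\bar a$ that worked for $N$ works for $N_g$. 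Since for a generic $g\in G_{\bar a}$ the set $\fix(g)$ contains little beyond the substructure generated by $\bar a$, requiring the tuple produced by the ample-generics argument on $\aut(N_g)$ to land in $\fix(g)$ is a constraint the argument simply cannot enforce.

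The paper sidesteps this by arguing \emph{by contradiction}, and that is the missing idea. Assume the conclusion fails: for every tuple $\bar a$ there is $f_{\bar a}\in G_{\bar a}\setminus W^{10}$. Now, crucially, you close the countable substructure $X_0$ not just under the $g_n$'s witnessing syndeticity but under the entire family $\{f_{\bar a}:\bar a\in X_0^{<\omega}\}$ of potential counterexamples (this is exactly what the two-parameter L\"owenheim--Skolem Lemma~\ref{lowenheim} and Corollary~\ref{lowenheimcor} are set up to do). Then whichever tuple $\bar a\in X_0^{<\omega}$ Corollary~\ref{amplegencor} hands you, the counterexample $f_{\bar a}$ already preserves $X_0$, its restriction to $X_0$ fixes $\bar a$, so it lies in $(W_0)^{10}\subseteq W^{10}$ --- contradicting the choice of $f_{\bar a}$. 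The contradiction framing is what lets you pre-commit to all possible outputs of the ample-generics argument simultaneously; a direct approach that picks $N$ first and then confronts an arbitrary $g\in G_{\bar a}$ cannot recover this.
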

\begin{proof}
  Suppose otherwise. This means that for each $k\in\N$ and
  $a\in X^k$ there is $f_{\bar a}\in G$ with $f_{\bar
    a}(\bar a)=\bar a$ and $f_{\bar a}\notin W^{10}$. Let $g_n\in G$ be such that
  $G=\bigcup_n g_nW$. Use
  Corollary \ref{lowenheimcor} to find a countable, dense
  $X_0\subseteq X$ such that $X_0\prec X$ and $X_0$ is
  closed under each $g_n$ as well as under $f_{\bar a}$ for
  each $\bar a\in(X_0)^{<\omega}$. Since $X_0$ is elementary
  in $X$, it has locally finite automorphisms and the
  extension property by Lemmas \ref{locally.finite.lemma}
  and \ref{independence.lemma}.

  Write $W_0$ for the set of those automorphisms of $X_0$
  whose unique extension to $X$ belongs to $W$.

  \begin{claim}
    $W_0$ is symmetric and countably syndetic in
    $\aut(X_0)$.
  \end{claim}
  \begin{proof}
    It is clear that $W_0$ is symmetric. To see that it is
    countably syndetic, pick $f_0\in\aut(X_0)$ and let
    $f\in\aut(X)$ be the unique extension of $f_0$ to
    $X$. Now, there is $n\in\N$ and $s\in W$ such that
    $f=g_ns$. Since $s=g_n^{-1}f$ leaves $X_0$ invariant, we
    have that $s_0=s\restriction X_0\in\aut(X_0)$ and
    $s_0\in W_0$.
  \end{proof}

  Now, this gives a contradiction since by Corollary
  \ref{amplegencor}, $(W_0)^{10}$ contains an open
  neighborhood of the identity in $\aut(X_0)$, i.e. there is
  $\bar a\in (X_0)^{<\omega}$ such that every automorphism
  of $X_0$ which fixes $\bar a$ belongs to
  $(W_0)^{10}$. Write $f_0$ for $f_{\bar a}\restriction X_0$
  and note that $f_0\notin(W_0)^{10}$, by the density of
  $X_0$, contradiction. This proves the lemma.
\end{proof}

\begin{theorem}
  Suppose $X$ is a complete homogeneous metric structure
  that admits weakly isolated sequences, has locally finite
  automorphisms and the extension property. Then the group
  $\aut(X)$ is Steinhaus.
\end{theorem}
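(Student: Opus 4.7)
The plan is to verify directly that $G := \aut(X)$ is $k$-Steinhaus for some fixed $k$ depending only on $m$. Fix a symmetric countably syndetic $W\subseteq G$, and apply Lemma \ref{elementary} to obtain $\bar a\in X^{<\omega}$ with $G_{\bar a}\subseteq W^{10}$; set $p = \mtp(\bar a)$. Homogeneity of $X$ forces the orbit $G\cdot\bar a$ to coincide with the full type space $p(X)$, which is Polish since $X$ is complete. Writing $G = \bigcup_n g_nW$ by countable syndeticity and using that each $g_n$ acts as a self-homeomorphism of $p(X)$, a Baire category argument makes $W\cdot\bar a\subseteq p(X)$ nonmeager.

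Next I would feed the nonmeager set $Z := W\cdot\bar a$ into the hypothesis that $X$ admits $m$-weakly isolated sequences, obtaining $(\bar a_n)\subseteq Z$ which is $m$-weakly isolated in $p$, with witnesses $\varepsilon_n>0$ and $Y_n\subseteq p(X)$, together with $T_n\subseteq p(X)$ and nonempty open $U_n\subseteq p(X)$ as in Definitions \ref{def:misolated} and \ref{def:admits}. Each $\bar a_n$ has the form $w_n\bar a$ with $w_n\in W$, and every point of $U_n$ is reachable from $\bar a_n$ by an $m$-step walk $g_1,\ldots,g_m\in\aut(X)$ with $g_i\bar a_n\in T_n$ at distance less than $\varepsilon_n$ from $\bar a_n$.

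The central claim is that every $\bar b\in Y_n$ with $d(\bar b,\bar a_n)<\varepsilon_n$ and $\mtp(\bar b) = p$ lies in $W^{13}\cdot\bar a$. The strategy is: select, for some fixed $l\neq n$, an element $\bar b_l\in Y_l\cap(W\cdot\bar a)$ close to $\bar a_l$ with type $p$, write $\bar b_l = w_l'\bar a$ with $w_l'\in W$, pick any admissible $\bar b_{l'}\in Y_{l'}$ at the remaining indices $l'\neq n,l$, and set $\bar b_n := \bar b$. The weakly isolated property then provides $\varphi\in G$ with $\varphi(\bar a_j) = \bar b_j$ for every $j$; the constraint at index $l$ forces $\varphi\in w_l'G_{\bar a}w_l^{-1}\subseteq W\cdot W^{10}\cdot W = W^{12}$, and hence $\bar b = \varphi w_n\bar a\in W^{13}\cdot\bar a$. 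The hard part I expect is establishing the existence of $\bar b_l\in Y_l\cap(W\cdot\bar a)$ with the required closeness and type for at least one $l\neq n$: this requires exploiting the nonmeagerness of $W\cdot\bar a$ against the structural properties of $Y_l$ (which via $T_l$-saturation contains representatives for each type realized in $T_l$ near $\bar a_l$), most likely through a Baire-category argument in $p(X)$ or an iterative construction over the indices.

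Granted the central claim, for $\bar c\in U_n$ I would expand the walk $\bar c = g_m\cdots g_1\bar a_n$ and use $T_n$-saturation to replace each $g_i\bar a_n$ by $\bar b_i\in Y_n$ of the same type over $\bar a_n$. By the central claim, $\bar b_i\in W^{13}\cdot\bar a$; homogeneity supplies $h_i\in G_{\bar a_n} = w_nG_{\bar a}w_n^{-1}\subseteq W^{12}$ with $h_ig_i\bar a_n = \bar b_i$. A short manipulation then places each $g_i$ in $W^c$ for a fixed constant $c$, so $\bar c = g_m\cdots g_1 w_n\bar a\in W^{cm+1}\cdot\bar a$ and hence $U_n\subseteq W^{cm+1}\cdot\bar a$. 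Finally, $V := \{g\in G:g\bar a\in U_n\}$ is a nonempty open subset of $G$, and for $g\in V$ we may write $g\bar a = w'\bar a$ with $w'\in W^{cm+1}$, giving $g\in w'G_{\bar a}\subseteq W^{cm+11}$. Thus $W^{cm+11}\supseteq V$ has nonempty interior, establishing the Steinhaus property.
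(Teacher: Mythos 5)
Your opening moves match the paper exactly: Lemma \ref{elementary} gives $\bar a$ with $G_{\bar a}\subseteq W^{10}$; $Z = W\cdot\bar a$ is nonmeager in $p(X)$ by the covering $p(X)=\bigcup_n g_n Z$; you feed $Z$ into the $m$-weakly isolated hypothesis, obtaining $\bar a_n = w_n\bar a$ with witnesses $\varepsilon_n,Y_n,T_n$; and the final step, translating a nonempty open $U$ into an open set $\{g:g\bar a\in U\}$ inside a bounded power of $W$, is also the same.

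The divergence, and the gap, is the ``central claim.'' You try to control the automorphism $\varphi$ supplied by weak isolation by engineering the target at some auxiliary index $l\neq n$: if $\bar b_l\in Y_l\cap(W\cdot\bar a)$ at distance less than $\varepsilon_l$ with type $p$, then $\varphi\in w_l'G_{\bar a}w_l^{-1}\subseteq W^{12}$ and you are done. But the existence of such $\bar b_l$ is not available. The set $Y_l$ is only required to be $T_l$-relatively $\varepsilon_l$-saturated over $\bar a_l$, which means it realizes the types over $\bar a_l$ of elements of $T_l$; it has no prescribed relationship to $W\cdot\bar a$. In particular, although $\bar a_l = w_l\bar a\in W\cdot\bar a$, you cannot even take $\bar b_l := \bar a_l$, because $\bar a_l\in Y_l$ is not guaranteed once $m>1$: the $\varepsilon$-type of $\bar a_l$ over $\bar a_l$ is realized in $Y_l$ only if $\bar a_l\in T_l$, and the definition of $(m,\varepsilon_l)$-generating an open set (Definition of $T_l$) does not force $\bar a_l\in T_l$. (For $m=1$ this is fine---the remark after Definition \ref{def:sat} notes that relative $\varepsilon$-saturation implies $\bar a\in Y$---which is why your plan works for the Urysohn space and measure algebra but not for the general case needed to cover the Hilbert space.) Nonmeagerness of $W\cdot\bar a$ in $p(X)$ does not rescue this: you would need it to meet $Y_l$ locally near $\bar a_l$, and $Y_l$ carries no topological largeness at all.

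The paper sidesteps this by aiming for a weaker, pigeonholed statement rather than a claim about all of $Y_n$. Its Claim \ref{fullness} asserts: there exists some $n$ such that $W^2$ is full for the set $C_n$ of embeddings of $\bar a_n$ into $\ball(\bar a_n,\varepsilon_n)\cap X_n$. The proof exploits the syndetic covering $G=\bigcup_n g_nW$ as the pigeonhole over $n$: if no $n$ works, pick a bad $\varphi_n\in C_n$ for each $n$, use weak isolation to extend all of them at once to a single $\varphi\in G$, and observe $\varphi\notin\bigcup_n g_nW$---contradiction. The index $n$ is not chosen but ``found'' by the covering, and the automorphism $\varphi$ is controlled automatically by belonging to some coset $g_nW$, not by any cleverness at an auxiliary index. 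This is precisely the mechanism your plan is missing.
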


\begin{proof}
  Write $G$ for $\aut(X)$. Suppose $m\in\N$ is such that $X$
  admits $m$-weakly isolated sequences. We will show that
  $G$ is $(24m+10)$-Steinhaus. The same argument also shows
  that if $X$ admits isolated sequences then $G$ is
  $24$-Steinhaus. Let $W\subseteq G$ be symmetric and
  countably syndentic. Let $g_n\in G$ be such that
  $G=\bigcup_n g_nW$.

  Let $\bar a$ be such as in Lemma \ref{elementary} and
  $k\in\N$ such that $\bar a\in X^k$. Write $\bar
  a=(a_1,\ldots,a_k)$, $p$ for $\mtp(\bar a)$ and
  $Z=\{w(\bar a):w\in W\}\subseteq p(X)$. Note that since
  $p(X)=\bigcup_n g_nZ$, the set $Z$ is nonmeager in
  $p(X)$. Choose an $m$-weakly isolated sequence $\bar a_n$
  such that each $\bar a_n$ belongs to $Z$ and for each
  $n\in\N$ let $v_n\in W$ be such that $\bar a_n=v_n(\bar
  a)$. Let $\varepsilon_n>0$, and $T_n,X_n\subseteq p(X)$
  witness that the sequence of $\bar a_n$ is $m$-weakly
  isolated (i.e. $X_n$ is $T_n$-relatively
  $\varepsilon_n$-saturated in $X$ and $T_n$
  $(m,\varepsilon_n)$-generates an open set over $\bar
  a_n$).

  Given two subspaces $X',X''$ of $X$ and a set $C$ of
  partial automorphisms from $X'$ to $X''$, say that a
  subset $G_0$ of $G$ is \textit{full for $C$} if every
  element of $C$ can be extended to an element of $G_0$
  (cf. \cite[Claim 1]{rosendal.solecki}).

  \begin{claim}\label{fullness}
    There is $n\in\N$ such that $W^2$ is full
    for $$C_n=\{\varphi:\bar a_n\to\ball_X(\bar
    a_n,\varepsilon_n)\cap X_n:\varphi\mbox{ is an
      isomorphic embedding}\}.$$
  \end{claim}
  \begin{proof}
    First note that there is $n\in\N$ such that $g_nW$ is
    full for $C_n$. If not, then for each $n\in\N$ there is
    $\varphi_n\in C_n$ such that $\varphi_n$ cannot be
    extended to an element of $g_nW$. Since $\bar a_n$ are
    isolated, there is an automorphism $\varphi$ of $X$
    which extends all the $\varphi_n$. Then
    $\varphi\notin\bigcup_n g_nW$, which is a
    contradiction. Now, if $g_nW$ is full for $C_n$, then so
    is $W^2=(g_nW)^{-1}(g_nW)$ as $C_n$ contains the
    identity.
  \end{proof}

  Fix $n$ as in Claim \ref{fullness} and write
  $T=v_n^{-1}(T_n)$.
  \begin{claim}\label{additional}
    We have $$\{g\in G: d_X(\bar a,g(\bar
    a))<\varepsilon_n\mbox{ and }g(\bar a)\in T\}\subseteq
    W^{24}.$$
  \end{claim}
  \begin{proof}
    Let $g\in G$ be such that $g(\bar a)\in T$ and
    $d_X(a_i,g(a_i))<\varepsilon_n$ for each $i\leq k$.

    Let $Y=v_n^{-1}(X_n)$. Note that since $v_n$ is an
    automorphism and $X_n$ is $T_n$-relatively
    $\varepsilon_n$-saturated over $\bar a_n$, we get that
    $Y$ is $T$-relatively $\varepsilon_n$-saturated over
    $\bar a$. Thus, there is $\bar b\in Y$
    with $$\mtp(\bar b\slash\bar a)=\mtp(g(\bar a)\slash
    \bar a).$$ By homogeneity of $X$, there is $w_1\in
    G_{\bar a}$ such that $w_1(g(\bar a))=\bar b$. Note that
    $w_1\in W^{10}$ as $G_{\bar a}\subseteq W^{10}$.

    Look at $v_nw_1^{-1}gv_n^{-1}\in G$ and note that it
    maps $\bar a_n$ to $\ball_X(\bar a_n, \varepsilon_n)\cap
    X_n$ because $v_n$ maps $Y$ to $X_n$. By Claim
    \ref{fullness} and the choice of $n$, there is $w_2\in
    W^2$ which is equal to $v_{n}w_1^{-1}gv_n^{-1}$ on $\bar
    a_n$. This means that $w_2^{-1}v_{n}w_1^{-1}gv_n^{-1}\in
    G_{\bar a_n}$ and thus
    $v_n^{-1}w_2^{-1}v_{n}w_1^{-1}g\in G_{\bar
      a}$. Therefore, $v_n^{-1}w_2^{-1}v_{n}w_1^{-1}g\in
    W^{10}$ and thus $g\in W^{24}$. 
  \end{proof}
  
  Now note that $T$ $(m,\varepsilon_n)$-generates an open
  set over $\bar a$, so let $U\subseteq p(X)$ be nonempty
  and such that for every $\bar b\in U$ there is a sequence
  $g_1,\ldots,g_m\in\aut(X)$ such that
  \begin{itemize}
  \item $g_m\ldots g_1(\bar a)=\bar b$,
  \item $g_i(\bar a)\in T$ and $d_X(g_i(\bar a),\bar
    a)<\varepsilon_n$ for each $i\leq m$.
  \end{itemize}
  We claim that $$\{g\in G: g(\bar a)\in U\}\subseteq
  W^{24m+10}.$$ To see this, let $g\in G$ be such that
  $g(\bar a)\in U$ and let $\bar b=g(\bar a)$. Find
  $g_1,\ldots,g_m$ as above and note that by Claim
  \ref{additional} we have that $g_i\in W^{24}$ for each
  $i\leq m$. Now, $g^{-1}g_m\ldots g_1\in G_{\bar a}$, so
  $g\in W^{24m+10}$, as needed.

  The set $\{g\in G: g(\bar a)\in U\}$ is open in $G$, so we
  have that $G$ is $(24m+10)$-Steinhaus. This ends the proof.
\end{proof}

\section{Triviality of homomorphisms}\label{sec:trivial}

In this section we study the circumstances under which one
can exclude nontrivial homomorphisms from the groups of the
form $\aut(M)$ to certain topological groups $H$. Given two
groups $G$ and $H$ say that $G$ is \textit{$H$-trivial} if
any homomorphism from $G$ to $H$ is trivial. Tsankov
\cite{tsankov} concluded (from the minimality of the unitary
group) that whenever $H$ is a separable group which admits a
complete left-invariant metric, then the unitary group is
$H$-trivial. In this section, we isolate an abstract
property of a metric structure $M$ which implies that
$\aut(M)$ is $H$-trivial for $H$ as above and in Section
\ref{sec:hilbert} we will see that this property is
satisfied by the Hilbert space. The same is true for the
group $\aut(X,\mu)$ but here it follows immediately from the
fact that $\aut(X,\mu)$ is simple \cite{fathi}. The unitary
group is not simple but (similarly as the group of
isometries of the Urysohn space \cite{tent.ziegler}) has a
maximal proper normal subgroup \cite{fong}. We do not know,
however, if the methods below apply to $\U$.

Given a subset $N$ of a metric structure $M$ and $\bar a\in
M^{<\omega}$, say that $N$ is \textit{relatively saturated
  over $\bar a$} if it is $\alpha$-relatively saturated over
$\bar a$ for every $\alpha\in[0,\infty)$.

\begin{definition}
  Suppose $M$ is a homogeneous metric structure and $\bar
  a\in M^k$ for some $k\in\N$. Write $p$ for $\mtp(\bar
  a)$. Say that a sequence $(\bar a_n:n\in\N)$ of elements
  of $M^k$ is an \textit{independent sequence in $p$} if
  every $\bar a_n$ realizes $p$ and there exists a sequence
  of subsets $N_n$ such that $N_n$ is relatively saturated
  over $\bar a_n$ such that for every sequence $\bar b_n\in
  N_n$ with $\mtp(\bar b_n)=\mtp(\bar a_n)$ there is an
  automorphism $\varphi$ of $M$ with $\varphi(\bar a_n)=\bar
  b_n$ for every $n\in\N$.
\end{definition}

\begin{definition}
  Say that a metric structure $M$ \textit{admits independent
    sequences} if for every $k\in\N$ and $\bar a\in M^k$,
  for every sequence $(\bar s_n:n\in\N)$ of finite tuples of
  elements of $M$, there is a sequence $\bar a_n$ which is
  independent in $\mtp(\bar a)$ and is such that $\bar
  a_{n+1}\equiv_{\bar s_n} \bar a_n$ holds for each
  $n\in\N$.
\end{definition}

\begin{theorem}
  Suppose $X$ is a complete metric structure that admits
  independent sequences, has locally finite automorphisms
  and the extension property. Then the group $\aut(X)$ is
  $H$-trivial for every Polish group $H$ which has a
  complete left-invariant metric.
\end{theorem}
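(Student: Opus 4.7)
The proof breaks into two stages. The first is to invoke the automatic continuity theorem from Section \ref{sec:automatic.continuity} to conclude that any $\phi\colon G\to H$ is continuous (where $G=\aut(X)$ and $H$ is Polish with complete left-invariant metric $d$). The second is to strengthen continuity to triviality, using the extra freedom provided by admitting \emph{independent} (rather than merely weakly isolated) sequences together with the completeness of $d$.

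For continuity, I would first verify that admitting independent sequences implies admitting weakly isolated sequences. Indeed, given an independent sequence $(\bar a_n)$ with relatively saturated sets $N_n$ (meaning $\alpha$-relatively saturated over $\bar a_n$ for every $\alpha\in[0,\infty)$), one takes $Y_n=N_n$, any $\varepsilon_n>0$, and $T_n=\{\bar b\in p(X):d_X(\bar b,\bar a_n)<\varepsilon_n\}$; these data witness $1$-weak isolation in the sense of Definition \ref{def:misolated}, since relative saturation at the scale $\varepsilon_n$ is available and a ball trivially $(1,\varepsilon_n)$-generates an open set over $\bar a_n$. Hence the theorem of Section \ref{sec:automatic.continuity} applies and $\phi$ is continuous.

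For triviality, set $\rho(g_1,g_2)=d(\phi(g_1),\phi(g_2))$: a continuous, left-invariant pseudometric on $G$. The aim is $\rho\equiv 0$. Suppose toward contradiction that $\rho(g_0,e)>0$ for some $g_0\in G$ and fix a finite tuple $\bar a\subseteq X$ with $g_0\notin G_{\bar a}$. I would apply admitting independent sequences to the tuple $(\bar a,g_0(\bar a))$, with $\bar s_n$ enumerating an increasing countable dense subset of $X$, to obtain coherent independent copies $(\bar a_n,\bar b_n)\models\mtp(\bar a,g_0(\bar a))$ together with relatively saturated sets $N_n$. Unbounded-scale saturation lets one realize simultaneously, for every $n$, any $\bar b_n'\in N_n$ with $\mtp(\bar b_n')=\mtp(\bar a_n)$ by a single automorphism of $X$. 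Combining this with the Steinhaus presentation $G_{\bar a}\subseteq W^{10}$ from Corollary \ref{amplegencor} (applied to the symmetric countably syndetic set $W=\{g\in G:\rho(g,e)<\varepsilon\}$) yields a sequence $(\Phi_k)$ in $G$ converging pointwise to some $\Phi_\infty\in G_{\bar a}$, while each $\Phi_k$ is built as a composition of conjugates of $g_0$ by elements of $\ker\phi$, so $\phi(\Phi_k)=\phi(g_0)$ for every $k$. Continuity of $\phi$ then gives $\phi(g_0)=\lim_k\phi(\Phi_k)=\phi(\Phi_\infty)$, and a final Steinhaus argument on $G_{\bar a}$ forces $\phi(\Phi_\infty)=e$, so $\phi(g_0)=e$, a contradiction.

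The principal obstacle is locating the conjugators for $\Phi_k$ inside $\ker\phi$ while maintaining pointwise convergence $\Phi_k\to\Phi_\infty$ in $G$. Left-invariance of $d$ alone does not control $d(hxh^{-1},e)$ in terms of $d(x,e)$, so conjugators outside $\ker\phi$ would break the identity $\phi(\Phi_k)=\phi(g_0)$; placing them inside requires exploiting the density of $\ker\phi$ in specific Steinhaus-neighborhoods, which in turn uses the scale-unbounded saturation absent from weakly isolated sequences. A secondary difficulty is ensuring the limit $\Phi_\infty$ genuinely lies in $G_{\bar a}$; this is where the coherence $\bar a_{n+1}\equiv_{\bar s_n}\bar a_n$ is used, by forcing the $\bar a_n$ to stabilize in type over larger and larger subsets of $X$ and hence the partial products to settle on $\bar a$.
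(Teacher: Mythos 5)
Your plan has a fundamental problem at Stage~1, and Stage~2 is not a proof but a speculative outline that differs structurally from what the theorem actually requires.

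\emph{Stage~1 does not go through.} You claim that admitting independent sequences implies admitting weakly isolated sequences, so that the Steinhaus theorem of Section~\ref{sec:automatic.continuity} applies. But compare Definitions~\ref{def:admits} and the definition of ``admits independent sequences'': the former requires, \emph{for every nonmeager $Z\subseteq p(X)$}, a weakly isolated sequence \emph{lying inside $Z$}; the latter asks only for an independent sequence satisfying the constraints $\bar a_{n+1}\equiv_{\bar s_n}\bar a_n$ and says nothing about nonmeager sets. An individual independent sequence is indeed $1$-weakly isolated (your observation about $N_n$ being relatively saturated and a ball $(1,\varepsilon)$-generating an open set is correct), but that is a statement about sequences, not about the structure. ``Admits independent sequences'' simply has the wrong quantifier to yield ``admits weakly isolated sequences,'' so you have no continuity theorem to invoke. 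Indeed the theorem's hypotheses deliberately omit ``admits weakly isolated sequences,'' because the proof does not pass through continuity at all.

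\emph{Stage~2 misidentifies where the hypotheses are used.} The paper's argument is a direct covering argument in the spirit of the Steinhaus proof, not ``continuity plus conjugation.'' Fix an open $U\ni 1_H$, choose $V$ with $V^{22}\subseteq U$, and set $T=\varphi^{-1}(V)$; the goal is $G\subseteq T^{22}$. Lemma~\ref{elementary} gives $\bar a$ with $G_{\bar a}\subseteq T^{10}$. The role of completeness of $d_H$ is precisely the following: using Lemma~\ref{elementary} repeatedly, one picks tuples $\bar s_n$ with $G_{\bar s_n}\subseteq\varphi^{-1}(V_n)$ where $\diam_{d_H}(V_n)\to 0$, and the coherence condition $\bar a_{n+1}\equiv_{\bar s_n}\bar a_n$ in the definition of admitting independent sequences is exploited to produce $g_n\in G_{\bar s_n}$ so that $f_n=g_n\cdots g_0$ has $\varphi(f_n)$ $d_H$-Cauchy, hence convergent. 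That convergence is what makes the countable covering $G=\bigcup_n b_n T f_n^{-1}$ possible, from which the fullness claim (the analogue of Claim~\ref{fullness}) follows by independence of the $\bar a_n$, and then a straightforward Steinhaus computation puts an arbitrary $g$ into $T^{22}$. Your conjugation scheme, by contrast, is internally inconsistent: if $\Phi_k$ is a composition of $k$ conjugates of $g_0$ by kernel elements then $\varphi(\Phi_k)=\varphi(g_0)^k$, not $\varphi(g_0)$, and you acknowledge yourself that you cannot control the conjugators. The requirement ``$g_0\notin G_{\bar a}$'' is also vacuous (it holds for any $\bar a$ moved by $g_0$) and plays no role. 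In short, the decisive idea you are missing is that completeness of $d_H$ is used to force $\varphi(f_n)$ to converge along a carefully constructed independent sequence, and that this convergence powers a covering argument that bypasses continuity entirely.
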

\begin{proof}
  Write $G$ for $\aut(X)$ and suppose $H$ has a complete
  left-invariant metric. Fix $\varphi: G\to H$ and we will
  show that $\varphi$ is trivial. To do this, it is enough
  to see that whenever $U\subseteq H$ is an open
  neighborhood of the identity, then $\varphi(G)\subseteq
  U$.

  Let then $U\subseteq H$ be an open neighborhood of the
  identity. Find $V\subseteq H$ open neighborhood of the
  identity such that $V^{22}\subseteq U$ and write
  $T=\varphi^{-1}(V)$.  Note that $T\subseteq G$ is
  countably syndetic, so by Lemma \ref{elementary}, there is
  $k\in\N$ and $\bar a\in X^k$ such that $G_{\bar
    a}\subseteq T^{10}$. Write $p$ for the quantifier-free
  type of $\bar a$.

  Let $d_H$ be a complete left-invariant metric on $H$ and
  pick a decreasing sequence of open neighborhoods $V_n$ of
  the identity in $H$ with $V_0=V$ such that
  $\diam_{d_H}(V_n)\leq2^{-n}$. Let also $W_n\subseteq H$ be
  open symmetric neighborhoods of the identity in $H$ with
  $(W_n)^{10}\subseteq V_n$. Note that each
  $\varphi^{-1}(W_n)$ is countably syndetic in $G$. Using
  Lemma \ref{elementary}, by induction on $n>0$ pick
  increasing sequence $\bar s_n$ of finite tuples of
  elements of $X$ such that $G_{\bar
    s_n}\subseteq\varphi^{-1}(W_n)^{10}\subseteq\varphi^{-1}(V_n)$. Let
  $\bar s_0$ be the empty tuple.

  Write $\bar a_0=\bar a$. Using the assumption that $X$
  admits independent sequences, pick a sequence $g_n\in G$
  for $n>0$ such that $g_n\in G_{\bar s_n}$ and the sequence
  $(\bar a_n: n>0)$ defined as $a_{n+1}=g_n(\bar a_n)$,
  forms an independent sequence in $p$. Let $X_n\subseteq X$
  witness the that the sequence is independent, so that each
  $X_n$ is relatively saturated over $\bar a_n$.

  Let $f_n=g_n\ldots g_0$ and write $h_n=\varphi(f_n)$. Note
  that $h_n^{-1}$ is $d_H$-Cauchy in $H$ and hence
  convergent. Thus, $h_n$ is convergent in $H$ too. Let
  $h=\lim_n h_n$.

  \begin{claim}\label{fullness1}
    There is $n$ such that $f_nT^2f_n^{-1}$ is full
    for $$C_n=\{\varphi:\bar a_n\to X_n:\varphi\mbox{ is an
      isomorphic embedding}\}.$$
  \end{claim}
  \begin{proof}
    We will first prove that there is a sequence $b_n\in G$
    such that $G=\bigcup_n b_nTf_n^{-1}$. This will follow
    from the fact that there is a sequence $a_n\in H$ such
    that $H=\bigcup_n a_nVh_n^{-1}$ by taking $b_n$ such
    that $\varphi(b_n)=a_n$.

    To see the latter fact, pick $a_n$ so that they are
    dense in $H$ and we claim that $H=\bigcup_n
    a_nVh_n^{-1}$. Indeed, if $x\in H$, then note that the
    sequence $$z_n=xh_n=x\varphi(g_n)\ldots\varphi(g_0)$$ is
    convergent in $H$ to $z=xh$. Pick a subsequence
    $a_{k_n}$ converging to $z$. Since $z_{k_n}$ converges
    to $z$, we get that $d_H(z_{k_n},a_{k_n})\to 0$. Thus,
    $d_H(a_{k_n}^{-1}z_{k_n},1_H)\to 0$ as well and there is
    $n$ such that $a_{k_n}^{-1}xh_{k_n}\in V$ and then $x\in
    a_{k_n}Vh_{k_n}^{-1}$.

    Now, note that there is $n\in\N$ such that $b_n T
    f_n^{-1}$ is full for $C_n$. If not, then for each
    $n\in\N$ there is $\varphi_n\in C_n$ such that
    $\varphi_n$ cannot be extended to an element of
    $b_nTf_n^{-1}$. As the sequence $\bar a_n$ is
    independent, there is an automorphism $\varphi$ of $X$
    which extends all the $\varphi_n$. But then
    $\varphi\notin\bigcup_n a_nTf_n^{-1}$, which is a
    contradiction. Finally, if $a_nTf_n^{-1}$ is full for
    $C_n$, then so is
    $f_nT^2f_n^{-1}=(a_nTf_n^{-1})^{-1}(a_nTf_n^{-1})$ since
    $C_n$ contains the identity. This proves the claim.
  \end{proof}

  We need to prove that $G\subseteq \varphi^{-1}(U)$. Let
  $g\in G$ be arbitrary. Fix $n$ as in Claim \ref{fullness1}
  and note that $f_n(\bar a)=\bar a_n$. Let
  $Y=f_n^{-1}(X_n)$. Note that $Y$ and $X$ realize the same
  quantifier-free $n$-types over $\bar a$. This follows from
  the fact that $X_n$ is relatively saturated over $\bar
  a_n$ and $f_n$ is an automorphism.

  Thus, there is $\bar b\in Y$ such that $$\mtp(\bar
  b\slash\bar a)=\mtp(g(\bar a)\slash \bar a).$$ Let $w_1\in
  G_{\bar a}$ be such that $w_1(g(\bar a))=\bar b$. Note
  that $w_1\in T^{10}$ as $G_{\bar a}\subseteq W^{10}$.

  Look at $f_{n}w_1^{-1}gf_n^{-1}\in G$ and note that it
  maps $\bar a_n$ to $X_n$ as $f_n$ maps $Y$ to $X_n$. By
  Claim \ref{fullness1}, there is $y\in f_nT^2f_{n}^{-1}$
  which is equal to $f_{n}w_1^{-1}gf_n^{-1}$ on $\bar
  a_n$. Write $y=f_nw_2f_n^{-1}$ for some $w_2\in W^2$ and
  note
  that $$y^{-1}(f_{n}w_1^{-1}gf_n^{-1})=f_nw_2w_1^{-1}gf_n^{-1}\in
  G_{\bar a_n}.$$ Therefore,
  $w_2w_1^{-1}g=f_n^{-1}(y^{-1}(f_{n}w_1^{-1}gf_n^{-1}))f_n$
  fixes $\bar a$, which means that $$w_2w_1^{-1}g\in T^{10}$$
  and thus $g\in T^{22}\subseteq\varphi^{-1}(U)$.

  This shows that $G\subseteq\varphi^{-1}(U)$ and since
  $U\subseteq H$ was an arbitrary open neighborhood of the
  identity, we have that $\varphi$ is trivial. This ends the
  proof.
\end{proof}

\section{The Urysohn space}\label{sec:urysohn}

There is no essential difference in verifying that the
Urysohn space and the Urysohn sphere satisfy the assumptions
of Theorem \ref{abstract}. We will focus only on the Urysohn
space.

Locally finite automorphisms for the Urysohn space have been
already shown by Solecki \cite{sol.iso}, who proved that for
any finite metric space $X$ and finitely many partial
isometries of $X$, there is a metric space $Y$ containing
$X$ such that all these partial isometries extend to
isometries of $Y$. Solecki derived his result from an
extension theorem of Herwig and Lascar
\cite{herwig.lascar}. The theorem of Herwig and Lascar is
connected to the Rhodes' Type II Conjecture proved
independently by Ash \cite{ash} and by Ribes and
Zalesski\v\i\ \cite{ribes.zalesskii}. The latter results
concern the profinite topology on the free groups (cf. also
\cite{pin.reutenauer}).

Recall that \textit{the profinite topology} on a free group
$F_n$ is the one with the basis at the identity consisting
of finite-index subgroups of $F_n$. In the literature, the
fact that a set $A\subseteq F_n$ is closed in the profinite
topology is usually referred to as by saying that $A$ is
\textit{separable}. A classical result of M. Hall, Jr.
\cite{hall} says that any finitely generated subgroup of
$F_n$ is separable. Note that it also implies that any coset
of a finitely generated subgroup of $F_n$ is separable
(since the multiplication is continuous in the profinte
topology). The main result of Ribes and Zalesski\v\i\
\cite{ribes.zalesskii} states that products of finitely many
finitely generated subgroups of $F_n$ are also
separable. Again, note that it immediately implies that
products of finitely many cosets of finitely generated
subgroups of $F_n$ are separable as well.

An abstract connection between the theorem of Ribes and
Zalesski\v\i\ and extensions of partial isometries was
discovered by Rosendal \cite{rosendal.ribes.zalesskii}, who
expressed it in the language of finitely approximable
actions and, in particular, gave a new proof of the result
of Solecki \cite{sol.iso}. On the other hand, the paper of
Solecki \cite{sol.iso} contains a very elegant argument on
the extensions of one isometry. That argument is done in the
style of Mackey's constructions of induced actions
\cite[Page 190]{mackey} (cf. \cite[2.3.5]{becker.kechris})
and a similar argument has been used by Hrushovski
\cite{hrushovski} in the context of extensions of partial
isomorphisms of graphs.

Below, we present a new proof of Solecki's theorem
\cite{sol.iso}, which exploits the ideas used in the case of
one isometry in \cite[Section 3]{sol.iso} and is also done
in the style of Mackey's construction of induced actions.

\begin{theorem}[Solecki]
  The Urysohn space has locally finite automorphisms.
\end{theorem}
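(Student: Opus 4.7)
The plan is to reduce the statement to a purely combinatorial assertion and then carry out the Mackey-style construction sketched in the excerpt. Since $\U$ is ultrahomogeneous and contains every finite metric space isometrically, any finite metric space $Y$ extending a finite subspace $X\subseteq\U$ can be realized inside $\U$ on top of the given copy of $X$. Hence it suffices to show the following abstract claim: for every finite metric space $X$ and every finite tuple $\varphi_1,\ldots,\varphi_n$ of partial isometries of $X$, there is a finite metric space $Y\supseteq X$ on which each $\varphi_i$ extends to a global isometry.

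First I would build an infinite ``universal'' auxiliary space $\tilde Y$. Let $F_n=\langle s_1,\ldots,s_n\rangle$ be the free group on $n$ generators, one per $\varphi_i$, and put on $F_n\times X$ the path-pseudometric $\tilde d$ generated by letting $d_X$ be the distance inside each slice $\{w\}\times X$ and by decreeing $(ws_i,x)$ and $(w,\varphi_i(x))$ to lie at distance zero whenever $x\in\dom(\varphi_i)$. Quotienting by $\{\tilde d=0\}$ yields a metric space $\tilde Y$ on which $F_n$ acts by isometries via left multiplication on the first coordinate and which contains a canonical copy of $X$ via $x\mapsto[(e,x)]$. The verification that this embedding of $X$ is isometric, i.e.\ that the path-infimum introduces no shortcuts between points of $X$, is the one-isometry type of computation already carried out in \cite[Section~3]{sol.iso}: any putative shortcut corresponds to a nontrivial ``coincidence'' between compositions of the $\varphi_i^{\pm 1}$ that can be read off inside $X$ itself.

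To extract a finite $Y$ I would then find a finite-index normal subgroup $H\trianglelefteq F_n$ and set $Y=H\backslash\tilde Y$, where $H$ acts on the first coordinate. Then $Y$ is finite (since $[F_n:H]<\infty$ and $X$ is finite) and carries an isometric action of $F_n/H$ extending $\varphi_1,\ldots,\varphi_n$. The delicate requirement is that the composed map $X\hookrightarrow\tilde Y\twoheadrightarrow Y$ remain isometric. An unwanted shortcut in $Y$ between $[(e,x)]$ and $[(e,y)]$ corresponds to a finite concatenation of lifted paths through translates $(w_j,\cdot)$ whose total ``group displacement'' lies in $H$; forbidding all such shortcuts at once amounts to demanding that $H$ avoid a specified finite list of sets of the form $K_1g_1K_2g_2\cdots K_\ell g_\ell$, in which each $K_i$ is a finitely generated subgroup of $F_n$ encoding the automorphism structure of a partial composition pattern and each $g_i$ is a specific word. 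Ribes and Zalesski\u\i's theorem, asserting that finite products of cosets of finitely generated subgroups of $F_n$ are closed in the profinite topology, furnishes a finite-index normal subgroup $H$ meeting all these conditions.

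The main obstacle I anticipate is precisely the dictionary step: identifying which finitely generated subgroups $K_i$ and coset representatives $g_i$ capture the potential shortcuts in $\tilde Y$, and checking that avoiding the corresponding finite union of coset products guarantees no collapse at any pair in $X$. This translation between the geometric side (paths in $F_n\times X$) and the group-theoretic side (coset products in $F_n$) is what allows Ribes--Zalesski\u\i\ to be invoked; once it is in place, the Mackey-style induced action on $Y$ provides the required finite metric extension, and the theorem follows via ultrahomogeneity of $\U$.
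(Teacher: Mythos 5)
Your overall plan coincides with the paper's proof: reduce via ultrahomogeneity of $\U$ to the finite combinatorial extension statement, form a Mackey-style induced action on $X\times F_n$, and invoke the Ribes--Zalesski\u\i\ theorem to find a finite-index normal subgroup $H\trianglelefteq F_n$ so that the finite quotient still contains $X$ isometrically. But you explicitly flag ``the dictionary step'' as the main obstacle and do not resolve it, and that step is where essentially all the content of the proof lies. Two concrete issues are left open.

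First, for Ribes--Zalesski\u\i\ to deliver a \emph{single} finite-index subgroup $H$, the family of coset products to be avoided must be finite: a finite intersection of profinitely open neighborhoods of $e$ is again open and contains a finite-index subgroup, but an infinite intersection need not be. A priori, a potential shortcut between $[(e,x)]$ and $[(e,y)]$ could pass through arbitrarily many translates, giving infinitely many patterns. The paper closes this by fixing $M$ with $M\delta>\Delta$, where $\delta$ is the minimal nonzero distance in $X$ and $\Delta=\diam(X)$; any chain with more than $M$ jumps already has length exceeding $\Delta\geq d_X(x,y)$ and so cannot shorten anything. Only the finitely many nontrivial chains of bounded length need be excluded, and only then does the Ribes--Zalesski\u\i\ argument apply. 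Your sketch never establishes this finiteness.

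Second, even granting finiteness, you must identify the sets $T_{z_1}^{z_1'}\cdots T_{z_m}^{z_m'}$ concretely as products of cosets of finitely generated subgroups of $F_n$ (the paper shows $T_x^y$ is a coset of $T_x^x$, the fundamental group of a finite labeled graph built from the $\varphi_i$, hence finitely generated), and you must verify that the quotient embedding $x\mapsto[x,e]$ is actually isometric. The latter requires the cancellation and chain-shortening arguments (reducing a trivial chain with word product in $H$ down to a length-one chain), which your proposal leaves implicit under ``the one-isometry type of computation.'' So: right method, right theorem, but the proposal stops at the boundary where the actual argument starts.
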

\begin{proof}[Proof (\`a la Mackey)]
  By the finite extension property of the Urysohn space, it
  is enough to show that for every finite metric space $X$,
  for every tuple $\varphi_1,\ldots,\varphi_n$ of partial
  isometries of $X$ there is a finite metric space
  $Y\supseteq X$ such all $\varphi_1,\ldots,\varphi_n$
  extend to isometries of $Y$.

  Let $X$ be a finite metric space. Write
  $$\delta=\min\{d_X(x,y):x,y\in X,x\not=y\}$$ and let
  $\Delta=\diam(X)$. Suppose $\varphi_1,\ldots,\varphi_n$
  are partial isometries of $X$.

  Write $a_1,\ldots,a_n$ for the generators of the free
  group $F_n$. Write also $A$ for the set
  $\{a_1,\ldots,a_n,a_1^{-1},\ldots,a_n^{-1}\}$ and $W_n$
  for $A^*$ (the set of all words over $A$). For a word
  $w\in W_n$ with $w=v_1\ldots v_k$, $v_i\in A$ and $x\in X$
  say that $w(x)$ \textit{is defined} if there is a sequence
  of points $x_j\in X$ ($j\leq k$) with $x_0=x$ and
  $x_{j+1}=\varphi_i(x_j)$ if $v_{k-j}=a_i$ and
  $x_{j+1}=\varphi_i^{-1}(x_j)$ if $v_{k-j}=a_i^{-1}$. If
  $w(x)$ is defined, then write $w(x)=y$ for $y=x_k$ as
  above. We also use the notation $w(x)$ if $w$ belongs to
  $F_n$ (using the reduced word for $w$).

  For each $x,y\in X$ write $T_x^y$ for the set of $w\in
  F_n$ such that $w(x)=y$.

  \begin{claim}\label{cosets}
    For every $x,y\in X$ the set $T_x^y$ is either empty or
    a coset of a finitely generated subgroup of $F_n$.
  \end{claim}
  \begin{proof}
    If $T_x^y$ is nonempty, then let $w\in F_n$ be such that
    $w(x)=y$. Note that whenever $w'\in T_x^y$, then
    $w^{-1}w'\in T_x^x$. Now, $T_x^x$ is a finitely
    generated subgroup of $F_n$: it is the fundamental group
    of the graph whose vertices are the points in $X$ and
    (labelled) edges connect $x,y$ if $\varphi_i(x)=y$ for
    some $i\leq n$. Therefore, $T_x^y=wT_x^x$ is a coset of
    a finitely generated subgroup.
  \end{proof}

  \begin{claim}\label{rz}
    For every $m\in\N$ and $x_1,y_1,\ldots,x_m,y_m\in X$ the
    set $T_{x_1}^{y_1}\cdot\ldots\cdot T_{x_m}^{y_m}$ is
    closed in $F_n$ in the profinite topology.
  \end{claim}
  \begin{proof}
    This follows from the Ribes--Zalesski\v\i\ theorem
    \cite{ribes.zalesskii} and Claim \ref{cosets}.
  \end{proof}

  We need to define an extension of $X$. It will be obtained
  by dividing $X\times F_n$ by certain equivalence relation
  $\simeq$ so that $x\mapsto(x,e)\slash\simeq$ is an
  embedding. We will have to make sure that the extension is
  finite and define a metric on it so that the embedding is
  isometric. Before we make this definition precise, let us
  comment on how the metric on $(X\times F_n)\slash\simeq$
  will be defined. Note that there is a partial distance
  function $d_0$ on $X\times F_n$, namely for $(x,w),(y,w)$
  with $x,y\in X$ and $w\in F_n$ we put
  $d_0((x,w),(y,w))=d_X(x,y)$. Now, if $\simeq$ is an
  equivalence relation on $X\times F_n$, then there is a
  natural distance function on $(X\times F_n)\slash\simeq$
  defined as follows. If $C,D\in (X\times F_n)\slash\simeq$,
  then put $d_{(X\times F_n)\slash\simeq}(C,D)$ to be the
  minimum of $\Delta$ and the sums of the
  form 
  \begin{displaymath}\label{formula}
    \sum_{i=0}^{m-1} d_0(z_i,z_{i+1}')\tag{$**$}
  \end{displaymath}
  such that $z_0,z_1,z_1',\ldots,z_{m-1},z_{m-1}',z_m'\in
  X\times F_n$, the value $d_0(z_i,z_{i+1}')$ is defined for
  each $0\leq i<m$ and there is a sequence $C_0,\ldots,C_m$
  of elements of $X\times F_n\slash\simeq$ with
  $C_0=C,C_m=D$ and $z_0\in C_1$, $z_m'\in C_m$ and
  $z_j,z_j'\in C_j$ for $0<j<m$.

  Now we will define the equivalence relation $\simeq$ and
  check the details of the construction described above. For
  that, we need a couple of definitions. Given $x,y\in X$,
  a \textit{chain from $x$ to $y$} is a
  sequence $$z_0,z_1,z_1',\ldots,z_{m-1},z_{m-1}',z_m,z_m'\in
  X$$ such that $z_0=x,z_m=y$ and for each $1\leq i\leq m$
  there exists $w_i\in W_n$ such that $w_i(z_i)=z_i'$.  The
  \textit{distance of a chain}
  $z_0,z_1,z_1',\ldots,z_{m-1},z_{m-1}',z_m,z_m'$ is defined
  as $\sum_{i=0}^{m-1} d_A(z_i,z_{i+1}')$. A \textit{word
    realization} of a chain as above is a sequence of words
  $w_1,\ldots,w_m\in W_n$ such that $w_i(z_i)=z_i'$ for each
  $1\leq i\leq m$. Call a chain \textit{trivial} if it has a
  word realization $w_1,\ldots,w_m\in W_n$ such that
  $w_1\cdot\ldots\cdot w_m=e$ holds in $F_n$.

  Let now $M\in\N$ be such that $M\delta>\Delta$. Note that
  since $X$ is finite, there are only finitely many
  nontrivial chains $z_0,z_1,z_1',\ldots,z_m,z_m'\in X$ with
  $m\leq M$. For each nontrivial chain
  $z_0,z_1,z_1',\ldots,z_m,z_m'\in X$ with $m\leq M$, the
  set $T_{z_1}^{z_1'}\cdot\ldots\cdot T_{z_m}^{z_m'}$ is a
  closed subset of $F_n$ which does not contain $e$. Using
  Claim \ref{rz}, find a finite index normal subgroup $H\lhd
  F_n$ which is disjoint from every
  $T_{z_1}^{z_1'}\cdot\ldots\cdot T_{z_m}^{z_m'}$ as above.

  Write $Z$ for $X\times F_n$ and define an equivalence
  relation $\simeq$ on $Z$ as follows. Given $w_1,w_2\in
  F_n$ write $$(x_1,w_1)\simeq(x_2,w_2)$$ if there is $v\in
  F_n$ with $w_2^{-1}w_1H=vH$ and $v(x_1)=x_2$. Given
  $(x,w)\in Z$ write $[x,w]$ for its $\simeq$-class.

  Write $Y$ for $Z\slash\simeq$ and note that $Y$ is
  finite. The latter follows from the fact that if
  $F_n\slash H=\{d_1H,\ldots,d_tH\}$, then $Y=\{[x,d_i]:x\in
  X,i\leq t\}$. Now, define a metric $d_Y$ on $Y$ as
  follows. Let $d_Y([x,w],[y,v])$ be the minimum of $\Delta$
  and the set of sums of the form $$\sum_{i=0}^{m-1}
  d_X(z_i,z_{i+1}')$$ for sequences
  $z_0,z_1,z_1',\ldots,z_m,z_m'$ of elements of $X$ such
  that
  \begin{itemize}
  \item $z_0=x,z_m=y$,
  \item and there are $w_i\in F_n$ (for $0\leq i\leq m$)
    with $w_0=w,w_m=v$ and $(z_i',w_{i-1})\simeq(z_i,w_i)$
    for each $1\leq i\leq m$.
  \end{itemize}
  Note that a sum as above is equal to $0$ exactly when
  $z_i=z_{i+1}'$ for every $i<m$ and hence the definition of
  $d_Y$ does not depend on the representatives of
  $\simeq$-classes and defines a metric on $Y$. Note that
  this definition coincides with the formula given by
  (\ref{formula}).

  Define an embedding of $X$ into $Y$ via
  $x\mapsto[x,e]$. We claim that this is an isometric
  embedding and that each $\varphi_i$ extends to an isometry
  of $Y$. The second part is clear given the first one since
  for each $i\leq n$ the map $[x,w] \mapsto[x,a_iw]$ is
  well-defined and is easily seen to be an isometry of $Y$
  which extends $\varphi_i$. Thus, we only need to show that
  $x\mapsto[x,e]$ is an isometric embedding.

  \begin{claim}\label{disst}
    For any $x,y\in X$ and $w\in F_n$ the distance
    $d_Y([x,e],[y,w])$ is equal to the minimum of $\Delta$
    and the minimal distance of a chain from $x$ to $y$
    which has a word realization $v_1,\ldots,v_k$ such that
    $v_1\cdot\ldots\cdot v_kH=wH$.
  \end{claim}
  \begin{proof}
    If $z_0,z_1,z_1',\ldots,z_m,z_m'$ in $X$ are such that
    $$d_Y([x,e],[y,w])=\sum_{i=0}^{m-1} d_X(z_i,z_{i+1}')$$
    and $w_0,\ldots,w_n$ in $F_n$ are such that
    $w_0=e,w_m=w$ and $(z_i',w_{i-1})\simeq(z_i,w_i)$ for
    each $1\leq i\leq m$, then find $v_i\in F_n$ (for $1\leq
    i\leq m$) such that $v_iH=w_{i-1}^{-1}w_iH$ and
    $v_i(z_i')=z_i$. Then $z_0,z_1,z_1',\ldots,z_m,z_m'$ is
    a chain and $v_1,\ldots,v_m$ is its word realization
    with $v_1\ldots v_mH=w_0^{-1}w_1\ldots
    w_{m-1}^{-1}w_mH=w_mH=wH$.

    On the other hand, if $z_0,z_1,z_1',\ldots,z_m,z_m'$ in
    $X$ forms a chain from $x$ to $y$ with a word
    realization $v_1,\ldots,v_k$ such that
    $v_1\cdot\ldots\cdot v_kH=wH$, then write $w_0=e$ and
    $w_i=w_{i-1}v_i$ for $1\leq i<m$ and $w_m=w$.  Then the
    sequence $z_0,z_1,z_1'\ldots,z_m,z_m'$ together with
    $w_0,\ldots,w_m$ satisfies the conditions in the
    definition of $d_Y$.
  \end{proof}

  Consequently, as $d_X(x,y)\leq\Delta$, by Claim
  \ref{disst}, we have that $d_Y([x,e],[y,e])$ is the minimal
  distance of the chains from $x$ to $y$ which have a word
  realization $w_1,\ldots,w_k$ with $w_1\cdot\ldots\cdot
  w_k\in H$. Say that a chain $c$ from $x$ to $y$
  \textit{realizes the distance} if the distance of $c$ is
  equal to $d_Y([x,e],[y,e])$. We need to show that if a
  chain $c$ realizes the distance from $x$ to $y$, then its
  distance is equal to $d_X(x,y)$.

  \begin{claim}\label{cancellation}
    Suppose $x,y\in X$ and $c$ is a chain from $x$ to $y$
    with a word realization $w_1,\ldots,w_m\in W_n$. If
    $w_i=v_ia$ and $w_{i+1}=a^{-1}v_{i+1}$ for some $1\leq
    i< m$ with $v_i,v_{i+1}\in W_n$ and $a\in A$, then there
    is a chain $c'$ from $x$ to $y$ which has the same
    distance as $c$ and a word realization
    $w_1',\ldots,w_m'\in W_n$ such that $w_j'=w_j$ for
    $j\not=i,i+1$, $w_j=v_i$ for $j=i,i+1$.
  \end{claim}
  \begin{proof}
    Write $c=(z_0,z_1,z_1',\ldots,z_m,z_m')$. Let
    $\varphi=\varphi_k$ if $a=a_k$ and
    $\varphi=\varphi_k^{-1}$ if $a=a_k^{-1}$. Consider the
    chain $c'=(y_0,y_1,y_1',\ldots,y_m,y_m')$ with $y_j=z_j$
    for $j\not=i$ and $y_j'=z_j'$ for $j\not=i+1$, and
    $y_i=\varphi(z_i)$, $y_{i+1}'=v_{i+1}(z_{i+1})$. The
    distance of $c'$ is the same as that of $c$
    since $$d(y_i,y_{i+1}')=d(\varphi(z_i),v_{i+1}(z_{i+1}))=d(z_i,\varphi^{-1}(v_{i+1}(z_{i+1})))$$
    as $\varphi$ is an isometry. And we have
    $\varphi^{-1}(v_{i+1}(z_{i+1}))=w_{i+1}(z_{i+1})=z_{i+1}'$.
  \end{proof}

  Given two chains $c=(z_0,z_1,z_1'\ldots,z_m,z_m')$ and
  $c'=(z_0,z_1,z_1'\ldots,z_k,z_k')$, both from $x$ to $y$,
  say that $c$ is \textit{shorter than} $c'$ if $m<k$ and
  the distance of $c$ is not greater than that of $c'$.  Say
  that a word realization $w_1,\ldots,w_m$ of a chain
  \textit{has a trivial element} if there is $0<i<m$ with
  $w_i=e$.

  \begin{claim}\label{nontrivial}
    If a chain $c=(z_0,z_1,z_1'\ldots,z_m,z_m')$ from $x$ to
    $y$ has a word realization $w_1,\ldots,w_m$ with a
    trivial element, then there is a chain from $x$ to $y$
    which is shorter than $c$ and has a word realization
    $w_1',\ldots,w_k'$ with $w_1\ldots w_m=w_1'\ldots w_k'$.
  \end{claim}
  \begin{proof}
    Suppose $w_i=e$, i.e. $z_i=z_i'$. Consider the chain
    $$z_0,\ldots,z_{i-1},z_{i-1}',z_{i+1},z_{i+1}',\ldots,z_m,z_m'$$ and note that
    $w_1,\ldots,w_{i-1},w_{i+1},\ldots,w_m\in W_n$ is its
    word realization. The fact that this chain is shorter
    than $c$ follows from the triange inequality.
  \end{proof}

  \begin{claim}\label{different}
    If a chain $c=(z_0,z_1,z_1'\ldots,z_m,z_m')$ from $x$ to
    $y$ has a word realization $w_1,\ldots,w_m$ and
    $z_i=z_{i+1}'$ for some $0<i<m$, then there is a chain
    from $x$ to $y$ which is shorter than $c$ and has a word
    realization $w_1',\ldots,w_k'$ with $w_1\ldots
    w_m=w_1'\ldots w_k'$.
  \end{claim}
  \begin{proof}
    If $z_i=z_{i+1}'$, then consider the chain $c'$ of
    $y_0,y_1,y_1',\ldots,y_{m-1},y_{m-1}'$ with $y_j=z_j$
    for $j<i$, $y_j=z_{j+1}$ for $j\geq i$, $y_j'=z_j'$ for
    $j\leq i$ and $y_j'=z_{j+1}'$ for $j>i$. Note that it is
    still a chain from $x$ to $y$ with a word realization
    $w_1',\ldots,w_{m-1}'$ with $w_j'=w_j$ for $j<i$,
    $w_i'=w_iw_{i+1}$ and $w_j'=w_{j+1}$ for $j>i$.
  \end{proof}
  
  \begin{claim}\label{lengthone}
    If chain $c=(z_0,z_1,z_1'\ldots,z_m,z_m')$ from $x$ to
    $y$ realizes the distance from $x$ to $y$ and cannot be
    made shorter, then $m=1$ and $z_m=z_m'$.
  \end{claim}
  \begin{proof}
    Note that by Claim \ref{different} and the assumption
    that $M\delta>\Delta$ we have that $m\leq M$. First note
    that the chain must be trivial. Indeed, since otherwise,
    for any word realization $w_1,\ldots,w_m$ of $c$ we have
    $w_1\ldots w_m\in T_{z_1}^{z_1'}\cdot\ldots\cdot
    T_{z_m}^{z_m'}$ and the latter set is disjoint from $H$
    if the chain is nontrivial. Now, since the chain is
    trivial, it has a word realization $w_1,\ldots,w_m$ such
    that $w_1\ldots w_m=e$. Now, if $m\geq 2$, then Claims
    \ref{cancellation} and \ref{nontrivial} imply that the
    chain can be made shorter. Therefore, $m=1$ and $w_m=e$.
  \end{proof}
  
  Note finally that since $(x,y,y)$ is a chain from $x$ to
  $y$, Claim \ref{lengthone} implies that
  $d_Y([x,e],[y,e])=d_X(x,y)$ and we have that
  $x\mapsto[x,e]$ is an isometric embedding, as needed. This
  ends the proof.

\end{proof}

\begin{lemma}
  The Urysohn space has the extension property.
\end{lemma}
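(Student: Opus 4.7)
The plan is to use the standard free amalgamation construction for metric spaces. Given finitely generated (hence finite) substructures $A \subseteq B \cap C$ of $\U$, I would first define the \emph{free amalgam of $B$ and $C$ over $A$} as the set $B \sqcup_A C$ (i.e.\ $B \cup C$ with the two copies of $A$ identified) equipped with the metric extending the given metrics on $B$ and $C$ by the formula
\[
  d(b, c) = \min_{a \in A}\bigl(d_B(b, a) + d_C(a, c)\bigr)
\]
for $b \in B \setminus A$, $c \in C \setminus A$. A short case analysis using the triangle inequalities in $B$ and $C$ shows that $d$ is a metric (this is a routine check; the only nontrivial case of the triangle inequality is when the three points lie in $B\setminus A$, $A$ or $C\setminus A$ respectively, and it follows because the minima are taken over the same set $A$).

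Next, since $\U$ is universal for finite metric spaces and homogeneous, I would embed the free amalgam into $\U$ by an isometry that is the identity on $B$ (this is possible because the inclusion $B \hookrightarrow \U$ is a partial isometry and $\U$ is homogeneous). Let $C' \subseteq \U$ be the image of $C$ under this embedding. By construction $C'$ agrees with $C$ as a metric extension of $A$, so by homogeneity of $\U$ there is an isometry of $\U$ fixing $A$ pointwise and sending $C$ to $C'$; hence $C' \equiv_A C$.

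It remains to verify the independence $B \ind_A C'$. Let $\varphi : B \to B$ and $\psi : C' \to C'$ be partial isometries each leaving $A$ setwise invariant with $\varphi \restriction A = \psi \restriction A$. For $b \in B \setminus A$ and $c \in C' \setminus A$, using that $\varphi, \psi$ are isometries and that $\varphi^{-1}$ and $\psi^{-1}$ agree on $A$ (since $A$ is finite and $\varphi\restriction A = \psi \restriction A$ is a bijection of $A$), I would compute
\[
  d(\varphi(b), \psi(c)) = \min_{a \in A}\bigl(d(\varphi(b), a) + d(a, \psi(c))\bigr)
  = \min_{a \in A}\bigl(d(b, \varphi^{-1}(a)) + d(\psi^{-1}(a), c)\bigr),
\]
and since $a \mapsto \varphi^{-1}(a) = \psi^{-1}(a)$ is a bijection of $A$, this equals $\min_{a' \in A}(d(b, a') + d(a', c)) = d(b, c)$. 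Thus $\varphi \cup \psi$ is a partial isometry of $B \cup C'$, which, by homogeneity of $\U$, extends to an isometry of the structure it generates (in fact of $\U$ itself). This gives $B \ind_A C'$, completing the proof.

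The only mild obstacle is the bookkeeping in the independence verification, making sure that the alignment of $\varphi^{-1}\restriction A$ and $\psi^{-1}\restriction A$ lets one reindex the minimum; everything else reduces to the well-known free amalgamation for metric spaces together with the universal and homogeneous properties of $\U$.
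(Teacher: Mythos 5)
Your proof is correct and follows essentially the same approach as the paper: both use free amalgamation of $B$ and $C$ over $A$ with the metric $d(b,c)=\min_{a\in A}(d(b,a)+d(a,c))$, embed the amalgam into $\U$ over $B$, and observe $C'\equiv_A C$ and $B\ind_A C'$. You simply supply the verification of independence that the paper leaves implicit; the computation is right, though note that once $\varphi\cup\psi$ is shown to be an isometry of $B\cup C'$ you are already done, since in a purely relational language (no function symbols from $\U^m$ to $\U$) the substructure generated by $B\cup C'$ is just $B\cup C'$, so the final appeal to homogeneity of $\U$ is superfluous.
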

\begin{proof}
  This is a standard amalgamation argument. Note that since
  the language of metric spaces does not have any function
  symbols, instead of finitely generated structures, we talk
  about finite tuples. Suppose then that $\bar
  b=(b_1,\ldots,b_n),\bar c=(c_1,\ldots,c_m),\bar
  a=(a_1,\ldots,a_k)$ are finite tuples in $\U$. Write
  $B=\{b_1,\ldots,b_n\},C=\{c_1,\ldots,c_m\},A=\{
  a_1,\ldots,a_k\}$ and suppose $A\subseteq B\cap C$. Let
  $C'$ be copy of $C$ with $B\cap C'=A$ and let $D=B\cup C'$
  be a metric space with the metric $d_D$ such that
  $d_D\restriction B=d_\U\restriction B$, $d_D\restriction
  C'=d_\U\restriction C$ (under the natural identification)
  and if $b\in B, c\in C'$, then
  $d_D(b,c)=\min\{d_\U(b,a)+d_\U(a,c):a\in A\}$. Assume
  without loss of generality that $D$ is embedded into $\U$
  over $B$ and note that $C'\equiv_A C$ and $C'\ind_A
  B$. This ends the proof.
\end{proof}

To check that the Urysohn space admits isolated sequences,
we need to introduce a couple of definitions.  Given a
metric structure $M$ and a tuple $\bar a\in M^k$ and
$\varepsilon>0$ write $\ball_M(\bar a,\varepsilon)$ for
$\{x\in M: d_M(x,a_i)<\varepsilon\mbox{ for some }i\leq
k\}$. Suppose $M$ is a homogeneous metric structure, $\bar
a\in M^k$ for some $k\in\N$ and $p=\mtp(\bar a)$. We say
that a sequence $(\bar a_n:n\in\N)$ of elements of $p(M)$ is
\textit{isometrically isolated} if there exists a sequence
of $\varepsilon_n\in(0,\infty)$ and isometric
embeddings $$\eta_n: \ball_M(\bar a,\varepsilon_n)\to M$$
such that $\eta_n(\bar a)=\bar a_n$ and for every sequence
$\bar b_n\in\rng(\eta_n)$ such that $\mtp(\bar
b_n)=\mtp(\bar a_n)$ and $d_M(\bar a_n,\bar
b_n)<\varepsilon_n$ there is an automorphism $\varphi$ of
$M$ with $\varphi(\bar a_n)=\bar b_n$ for every $n\in\N$.

Note that any isometrically isolated sequence is isolated
since if $p=\mtp(\bar a)$ and $\eta: \ball_M(\bar
a,\varepsilon)\to M$ is an isometric embedding for some
$\varepsilon>0$, then $\{\bar b=(b_1,\ldots,b_k)\in p(M):
\forall i\leq k\quad b_i\in\rng(\eta_n)\}$ is relatively
$\varepsilon$-saturated over $\eta(\bar a)$.

Given $k\in\N$, say that a sequence $(\bar a_n:n\in\N)$ of
$k$-tuples of elements of a metric structure $M$ is
\textit{nontrivial convergent} if it is convergent as a
sequence in $M^k$ and if $\bar
a_\infty=(a^\infty_1,\ldots,a^\infty_k)$ is its limit and
$\bar a_n=(a^n_1,\ldots,a^n_k)$, then $a^n_i\not=
a^\infty_j$ and $a^n_i\not=a^m_j$ for any
$(n,i)\not=(m,j)\in\N^2$. In particular, $a_i^n\not=a_j^n$
for every $n\in\N$ and $i\not=j$. Note that, in case $k=1$,
a nontrivial convergent sequence is a convergent sequence
such that all its elements are distinct and different from
its limit.

A basic property of the Urysohn space that we will use in
the arguments below, due to Huhunai\v svili \cite{huhu}
(cf. \cite[Proposition 5.1.20]{pestov}), says that any
partial isometry between compact subspaces of $\U$ can be
extended to an isometry of $\U$.

\begin{lemma}\label{convergent}
  For every $k\in\N$ and a quantifier-free $k$-type $p$, any
  nontrivial convergent sequence in $p$ is isometrically
  isolated in $p$.
\end{lemma}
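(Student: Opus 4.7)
The plan is to take $\bar a := \bar a_\infty$, the limit of the convergent sequence, which realizes $p$ since quantifier-free types of $k$-tuples in a metric space are closed conditions (pairwise distances pass to limits). Then I would choose a sequence $\varepsilon_n > 0$ with $\varepsilon_n \to 0$ fast enough to satisfy:
\begin{itemize}
\item[(i)] the balls $\ball_\U(a^\infty_i,\varepsilon_n)$ are pairwise disjoint for $i\leq k$;
\item[(ii)] $\varepsilon_n<d_\U(a^n_i,a^\infty_i)$ for every $i\leq k$;
\item[(iii)] for all $m\neq n$ and $i,j\leq k$, $\max(\varepsilon_n,\varepsilon_m)<d_\U(a^n_i,a^m_j)$.
\end{itemize}
All infima involved are strictly positive by the nontriviality of the sequence (coordinates are pairwise distinct across indices and from the limits), so such $\varepsilon_n$ exist.

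The embeddings $\eta_n$ would be produced by a free (ultrametric-style) amalgamation. I construct an auxiliary separable metric space $\Omega$ obtained from $\U$ by attaching, for each $n$, a disjoint copy $B^{(n)}$ of the ball $\ball_\U(\bar a,\varepsilon_n)$, with each $[a^\infty_i]_n\in B^{(n)}$ identified with $a^n_i\in\U$. Inside $\U$ and inside each $B^{(n)}$ the metric is $d_\U$; for $[z]_n\in B^{(n)}$ (belonging to a unique sub-ball around $a^\infty_{i(z)}$ by (i)) and $y\in\U$, set $d([z]_n,y):=\max(d_\U(z,a^\infty_{i(z)}),d_\U(a^n_{i(z)},y))$, and use the analogous max formula between different bubbles. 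Conditions (i)--(iii) guarantee that this yields a consistent metric satisfying the triangle inequality. Since $\Omega$ is separable and $\U$ is injective in the category of separable metric spaces with isometric embeddings (a consequence of Urysohn's universality and homogeneity), the inclusion $\U\hookrightarrow\Omega$ extends to an isometric embedding $\iota\colon\Omega\to\U$, and I set $\eta_n(z):=\iota([z]_n)$; then $\eta_n$ is an isometric embedding with $\eta_n(\bar a)=\bar a_n$.

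For the isolating property, given $\bar b_n\in\rng(\eta_n)^k$ with $\mtp(\bar b_n)=\mtp(\bar a_n)$ and $d_\U(\bar a_n,\bar b_n)<\varepsilon_n$, write $b^n_i=\eta_n(z^n_i)$; the distance condition forces $z^n_i\in\ball_\U(a^\infty_i,\varepsilon_n)$. By the construction and conditions (ii)--(iii), $d_\U(b^n_i,b^m_j)=d_\U(a^n_i,a^m_j)$ for $n\neq m$ and $d_\U(b^n_i,a^\infty_j)=d_\U(a^n_i,a^\infty_j)$, while the type condition yields $d_\U(b^n_i,b^n_j)=d_\U(a^n_i,a^n_j)$. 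Hence the partial map $\sigma$ sending $a^n_i\mapsto b^n_i$ and $a^\infty_i\mapsto a^\infty_i$ is an isometric bijection between the compact sets $\{a^n_i:n,i\}\cup\{a^\infty_i:i\}$ and $\{b^n_i:n,i\}\cup\{a^\infty_i:i\}$ of $\U$ (compact because each coordinate sequence converges to the corresponding $a^\infty_i$). By Huhunai\v svili's theorem, $\sigma$ extends to an isometry $\varphi$ of $\U$, which satisfies $\varphi(\bar a_n)=\bar b_n$ for every $n\in\N$.

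The main obstacle is verifying that the ultrametric-style amalgamation really defines a metric, is compatible both with the internal $d_\U$-metric on each bubble $B^{(n)}$ and with the identifications $[a^\infty_i]_n=a^n_i$, and gives the precise cross-bubble distances needed; this is the technical heart of the proof and relies on choosing $\varepsilon_n$ small enough (relative to the distances between sequence terms and to the limit) as captured by (i)--(iii).
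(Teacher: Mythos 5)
Your overall strategy matches the paper's: glue fresh copies of the balls, use universality of $\U$ to embed the resulting auxiliary space, then finish with the Huhunai\v svili extension theorem on the compact set of sequence points together with the limit. The step that fails is the definition of the amalgamated metric on $\Omega$.

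The ``max'' formula $d([z]_n,y)=\max\bigl(d_\U(z,a^\infty_{i(z)}),\,d_\U(a^n_{i(z)},y)\bigr)$ does not satisfy the triangle inequality once $k\geq 2$. Concretely, take $k=2$ and write $\delta=d_\U(a^\infty_1,a^\infty_2)$; your condition (i) forces $\varepsilon_n\leq\delta/2$. Choose $z\in\ball_\U(a^\infty_1,\varepsilon_n)$ and $z'\in\ball_\U(a^\infty_2,\varepsilon_n)$ with $d_\U(z,a^\infty_1)=r$, $d_\U(z',a^\infty_2)=s$ and $d_\U(z,z')=\delta+r+s$ for some $r,s\in(0,\varepsilon_n)$ (this four-point metric configuration is consistent, so such $z,z'$ exist in $\U$). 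Pick $y\in\U$ with $d_\U(a^n_1,y)=d_\U(a^n_2,y)=\delta/2$. Then $d([z]_n,y)=\max(r,\delta/2)=\delta/2$ and $d([z']_n,y)=\max(s,\delta/2)=\delta/2$, while the internal metric of $B^{(n)}$ gives $d([z]_n,[z']_n)=d_\U(z,z')=\delta+r+s>\delta=d([z]_n,y)+d(y,[z']_n)$. The formula is also internally inconsistent at the identified points: for $i\neq i(z)$, the in-bubble metric gives $d([z]_n,[a^\infty_i]_n)=d_\U(z,a^\infty_i)$, while the max formula applied at $y=a^n_i=[a^\infty_i]_n$ gives $\max(d_\U(z,a^\infty_{i(z)}),\delta_{i(z),i})=\delta_{i(z),i}$, and these generally differ. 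The ``ultrametric-style'' max amalgamation is correct only when the ambient space is ultrametric; $\U$ is not, and here you need the additive one-gateway (path) formula $d([z]_n,y)=d_\U(z,a^\infty_{i(z)})+d_\U(a^n_{i(z)},y)$, or something equivalent, so that moving out of a bubble through its center costs the full detour. The paper works with a smaller auxiliary space---a disjoint union of copies of the balls $\ball_\U(\bar a_n,\varepsilon_n)$ plus a copy of the limit tuple, with constant cross-bubble distances $d_\U(a^n_i,a^m_j)$---and then checks that this is a metric via the constraints on $\varepsilon_n$. You correctly flagged the verification that the amalgam is a metric as the technical heart of the proof; as written, your max formula does not survive that verification.
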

\begin{proof}
  Let $p$ be the quantifier-free type of $\bar
  a\in\U^k$. Write $\bar a=(a^1,\ldots,a^k)$ and let
  $\delta_{ij}=d_\U(a^i,a^j)$ for $i,j\leq k$.

  Let $\bar a_n$ be a nontrivial convergent sequence in
  $p$. Assume that $\bar a_n$ converges to $\bar
  a_\infty=(a^\infty_1,\ldots,a^\infty_k)$. Write $\bar
  a_n=(a^n_1,\ldots,a^n_k)$ for each $n\in\N$. For each
  $n,m\in\N$ and $i,j\leq k$ let
  $\delta^{nm}_{ij}=d_\U(a^n_i,a^m_j)$ and
  $\delta^{n\infty}_{ij}=d_\U(a^n_i,a^\infty_j)$ and note
  that $\lim_{m,n\to\infty}\delta^{nm}_{ij}=\delta_{ij}$ and
  $\lim_{n\to\infty}\delta^{n\infty}_{ij}=\delta_{ij}$.

  For each $n\in\N$ choose $\varepsilon_n>0$ such that
  $\varepsilon_n<\delta^{nm}_{ij}$ for each $m\not=n$ and
  $i,j\leq k$ as well as
  $\varepsilon_n<\delta^{nn}_{ij}=\delta_{ij}$ for all
  $i\not=j$, $i,j\leq k$. Such an $\varepsilon_n>0$ exists
  since the sequence $(\bar a_n:n\in\N)$ is nontrivial and
  $\lim_{m\to\infty}\delta^{nm}_{ij}=d_\U(a^n_i,a^\infty_j)>0$.

  For each $n\in\N$ write $A_n$ for
  $\ball(\{a^n_1,\ldots,a^n_k\},\varepsilon_n)$. Note that
  $A_n$ is a disjoint union of balls around the points
  $a^n_i$. Consider the metric space $B'$ which is the
  disjoint union $\bigcup_{n\in\N} B_n'$ with each $B_n'$ a
  copy of $A_n$ (say the copy of $a^n_i$ in $B_k$ is
  $a^n_i{'}$) and let the metric on $B'$ be defined so that
  it is equal to the original metric $d_\U$ on each $B_n'$
  and if $x,y\in\bigcup_{n\in\N} B_n'$ are such that $x\in
  B_n'$ and $y\in B_m'$ with $n\not=m$ and
  $x\in\ball(a^n_i{'},\varepsilon_n)$,
  $y\in\ball(a^m_j{'},\varepsilon_m)$,
  then $$d_{B'}(x,y)=\delta^{nm}_{ij}.$$ Note that $d_{B'}$
  is a metric by the choice of the numbers $\varepsilon_n$.
  Now let $B_\infty=\{a^\infty_1{'},\ldots,a^\infty_k{'}\}$
  be a copy of $\{a^\infty_1,\ldots,a^\infty_k\}$ and let
  $B=B'\cup B_\infty$ with the metric $d_{B}=d_{B'}$ on
  $B'$, $d_{B}=d_{B_\infty}$ on $B_\infty$ and if $x\in
  B'$ is such that $x\in B_n$ with
  $x\in\ball(a^n_j{'},\varepsilon_n)$ and $i\leq k$, then
  $$d_B(x,a^\infty_i{'})=\delta^{n\infty}_{ji}.$$ 
  Since the subspace of $B$ consisting of the points
  $a^n_i{'}$ and $a^\infty_i{'}$ for $n\in\N$ and $i\leq k$
  is compact and isometric to the subspace of $\U$
  consisting of the points $a^n_i$ and $a^\infty_i$ for
  $n\in\N$ and $i\leq k$, the Huhunai\v svili theorem
  \cite{huhu} implies that the map $a^n_i{'}\mapsto a^n_i$
  and $a^\infty_i{'}\mapsto a^\infty_i$ extends to an
  isometric embedding $\eta$ of $B$ into $\U$.

  For each $n\in\N$ write $B_n$ for the image of $B_n'$
  under $\eta$. Note that each $B_n$ is an isometric copy of
  $\ball(\bar a,\varepsilon_n)$. We claim that the sets
  $B_n$ (treated as the embeddings of $\ball(\bar
  a,\varepsilon_n)$), together with the numbers
  $\varepsilon_n$ witness that $\bar a_n$ is isometrically
  isolated. For that, pick a sequence of isometric
  embeddings $\varphi_n:\{a^n_1,\ldots,a_k^n\} \to B_n$ with
  $d_\U(\varphi_n(a^n_i),a^n_i)<\varepsilon_n$ for each
  $i\leq k$. Consider a partial isometry $\varphi'$ of $\U$
  with
  $\dom(\varphi')=\bigcup_{n\in\N}\{a^n_1\ldots,a_k^n\}\cup\{a^\infty_1,\ldots,a^\infty_k\}$
  such that $\varphi'(a^\infty_i)=a^\infty_i$ and
  $\varphi'(a^n_i)=\varphi_n(a^n_i)$. Note that $\varphi'$
  is a partial isometry of the Urysohn space with compact
  domain, so again by the Huhunai\v svili theorem
  \cite{huhu}, there is an isometry $\varphi\in\iso(\U)$
  that extends $\varphi'$. Clearly, $\varphi$ extends each
  $\varphi_n$, which shows that $B_n$ are as needed and the
  sequence is isometrically isolated. This ends the proof.
\end{proof}

\begin{proposition}
  The Urysohn space admits isolated sequences.
\end{proposition}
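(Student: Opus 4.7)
The plan is to reduce, via Lemma \ref{convergent}, to producing a nontrivial convergent sequence inside $Z$. Since every nontrivial convergent sequence in a complete quantifier-free type $p$ is isometrically isolated (and hence isolated), it suffices to show: for every $k \in \N$, every complete quantifier-free $k$-type $p$ realized in $\U$, and every nonmeager $Z \subseteq p(\U)$, there exists a nontrivial convergent sequence $(\bar a_n) \subseteq Z$. If $p$ forces some coordinates to be equal, I would first reduce to the nondegenerate case by projecting to a choice of representatives of the equivalence classes of coordinates; a nontrivial convergent sequence for the quotient type lifts back through the diagonal embedding. So assume $p$ is nondegenerate, so that if $\bar a$ realizes $p$ then $d_\U(a_i, a_j) > 0$ for all $i \neq j$.

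The geometric input is a nowhere-density claim: for every $c \in \U$ and every $i \leq k$, the closed set $N_{i,c} = \{\bar b \in p(\U) : b_i = c\}$ is nowhere dense in $p(\U)$. Given $\bar b^* \in p(\U)$ with $b^*_i = c$ and $\varepsilon > 0$, I would appeal to the Urysohn extension property: the finite metric space $\{c\} \cup \{b^*_j : j \neq i\}$ augmented by a new point $b_i$ at the prescribed distances $d_\U(b_i, b^*_j) = d_\U(b^*_i, b^*_j)$ for $j \neq i$ and $d_\U(b_i, c) = \varepsilon/2$ is consistent (the triangle inequalities all reduce to $0 \leq \varepsilon/2$), so such $b_i \in \U$ exists and satisfies $b_i \neq c$. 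Replacing the $i$-th coordinate of $\bar b^*$ by $b_i$ gives a point of $p(\U)$ outside $N_{i,c}$ and within $\varepsilon$ of $\bar b^*$.

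Next I apply the Banach category theorem: let $V$ be the union of all open subsets of $p(\U)$ in which $Z$ is meager; then $Z \cap V$ is meager. If $V$ were dense in $p(\U)$, the closed nowhere-dense complement $p(\U) \setminus V$ would be meager, and then $Z = (Z \cap V) \cup (Z \cap (p(\U) \setminus V))$ would be meager, a contradiction. Hence $U_0 := p(\U) \setminus \overline{V}$ is a nonempty open set, and by construction $Z \cap W$ is nonmeager for every nonempty open $W \subseteq U_0$. Fix any $\bar a_\infty \in U_0$ and build $\bar a_n \in Z$ by induction: once $\bar a_0, \ldots, \bar a_{n-1}$ are chosen, set $F_n = \{a^\infty_j : j \leq k\} \cup \bigcup_{m<n}\{a^m_j : j \leq k\}$ and $G_n = \bigcap_{i \leq k,\, c \in F_n}(p(\U) \setminus N_{i,c})$. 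The nowhere-density claim makes $G_n$ a dense open subset of $p(\U)$, so $W_n := U_0 \cap G_n \cap \{\bar b \in p(\U) : d_\U(\bar b, \bar a_\infty) < 1/n\}$ is a nonempty open subset of $U_0$; therefore $Z \cap W_n$ is nonmeager, and I pick $\bar a_n$ from it.

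By construction $d_\U(\bar a_n, \bar a_\infty) < 1/n$, so $\bar a_n \to \bar a_\infty$ coordinatewise, and the constraint $\bar a_n \in G_n$ delivers the pairwise-distinctness conditions in the definition of a nontrivial convergent sequence (distinctness of the coordinates within a single $\bar a_n$ is automatic from the nondegeneracy of $p$). Hence $(\bar a_n) \subseteq Z$ is a nontrivial convergent sequence in $p$, and Lemma \ref{convergent} supplies the isolation data. I expect the main technical point to be the nowhere-density of the coordinate slices $N_{i,c}$; this is exactly where the Urysohn extension property is indispensable, and the remainder is Banach category bookkeeping.
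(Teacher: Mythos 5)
Your proof is correct and follows essentially the same route as the paper: reduce to a type with pairwise-distinct coordinates, use a Banach category argument to localize to an open set where $Z$ is everywhere nonmeager, inductively build a nontrivial convergent sequence in $Z$ by avoiding finitely many nowhere-dense coordinate slices, and invoke Lemma~\ref{convergent}; you merely spell out the nowhere-density step that the paper leaves implicit. One small inaccuracy: the triangle inequality $d(b_i,b^*_i)\le d(b_i,b^*_j)+d(b^*_j,b^*_i)$ reduces to $\varepsilon/2\le 2d(b^*_i,b^*_j)$, not to $0\le\varepsilon/2$, so you must first shrink $\varepsilon$ below $4\min_{j\ne i}d(b^*_i,b^*_j)$ (possible by nondegeneracy of $p$), which is harmless for nowhere density but should be said.
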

\begin{proof}
  Suppose $p$ is the quantifier-free $k$-type of a tuple
  $\bar a=(a_1,\ldots,a_k)$. First note that we can assume
  that $\bar a$ consists of distinct elements. Indeed,
  otherwise one can remove repetitions from $\bar a$ and
  work with a quantifier-free $m$-type $q$ for some
  $m<k$. Then, for every $m$-tuple $\bar b\in q(M)$ there is
  a unique tuple $\bar b'\in p(M)$, which contains $\bar b$
  such that
  \begin{itemize}
  \item if $(\bar b_n:n\in\N)$ is isolated in $q$, then
    $(\bar b_n':n\in\N)$ is isolated in $p$,
  \item the map $\bar b\mapsto \bar b'$ is a homeomorphism
    of $q(M)$ and $p(M)$.
  \end{itemize}

  Now, suppose $Z\subseteq p(\U)$ is nonmeager. Without loss
  of generality (restricting to an open subset of $p(\U)$ if
  neccessary), assume that $Z$ is nonmeager in every
  nonempty open set. Pick any $\bar a_\infty\in p(\U)$ with
  $\bar a_\infty=(a^\infty_1,\ldots,a^\infty_k)$ and note
  that $a^\infty_i\not=a^\infty_j$ for $i\not=j$. Using the
  assumption that $Z\cap V$ is nonmeager for every open
  neighborhood $V$ of $\bar a_\infty$, construct a sequence
  $\bar a_n$ of elements of $Z$ convergent to $\bar
  a_\infty$ such that if $\bar a_n=(a^n_1,\ldots,a_k^n)$,
  then $a^n_i\not= a^\infty_j$ and $a^n_i\not=a^m_j$ for any
  $n,m\in\N$ and $i,j\leq k$ with $(n,i)\not=(m,j)$. This
  sequence is then nontrivial convergent and hence isolated
  by Proposition \ref{convergent}.
\end{proof}

\section{The measure algebra}\label{sec:measure}

Recall that given a standard probability space $(X,\B,\mu)$
we define the equivalence $\approx$ on $\B$ by $A\approx B$
if $\mu(A\Delta B)=0$ and the measure algebra is the family
of equivalence classes of sets in $\B$. Given $A\in\B$ write
$[A]$ for its $\approx$-equivalence class (although we will
often abuse notation and write only $A$ instead of
$[A]$). The measure algebra is then the family of
$\approx$-classes of the sets in $\B$. It becomes a metric
space with the metric $d_\MALG([A],[B])=\mu(A\Delta B)$ and
we treat it as a metric structure together with this metric,
the operation of symmetric difference $\Delta$ and the empty
set as a constant. We write $\malg$ for the structure
$(\B\slash\!\!\approx,d_{\malg},\Delta,\emptyset)$.  The
Sikorski duality \cite[Theorem 15.9]{kechris} connects
automorphisms of $\malg$ and measure-preserving bijections
on the space $X$. In particular, it implies that the group
of automorphisms of $\malg$ with the topology of pointwise
convergence is isomorphic to the group of measure-preserving
bijections $\aut(X,\mu)$ with the weak topology (see
\cite[Section 1]{kechris.global}). For more details about
the measure algebra and the standard measure space we refer
the reader to \cite[Chapter 22]{handbook} and to
\cite[Chapter 32]{fremlin}.

Throughout the proofs below we often use the fact (cf
\cite[Lemma 7.10]{kechris.miller}) that whenever
$A,B\subseteq X$ have the same measure, then there is a
measure-preserving bijection $f:X\to X$ such that
$f(A)=B$. Below, given a finite subalgebra $\A$ of $\malg$,
we write $\at(\A)$ for the set of atoms of $\A$.

\begin{lemma}
  The measure algebra $\MALG$ has locally finite automorphisms.
\end{lemma}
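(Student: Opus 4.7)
The plan is to reduce to a combinatorial refinement problem for finite Boolean subalgebras. First I would observe that the subgroup of $(\MALG,\Delta,\emptyset)$ generated by $A_1,\ldots,A_k$ is contained in the finite Boolean subalgebra $\A\subseteq\MALG$ these elements generate, with atoms $B_1,\ldots,B_m$. Since $\A$ is itself a finitely generated substructure (generated by its atoms under $\Delta$), it suffices to find a finite Boolean subalgebra $\A'\supseteq\A$ such that every partial automorphism $\varphi_i:D_i\to R_i$ of $N$ extends to an automorphism of $\A'$.

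Second, I would establish the following structural characterization, which is the heart of the argument: the automorphisms of any finite Boolean subalgebra of $\MALG$ (viewed as a substructure) are precisely the measure-preserving permutations of its atoms. The proof uses the identity $\mu(A\Delta B)=\mu(A)+\mu(B)-2\mu(A\cap B)$ to force any $\mathbb{F}_2$-linear measure-preserving bijection to send disjoint atoms to essentially disjoint sets, from which a short counting argument forces the bijection to permute atoms. As a consequence, each partial automorphism $\varphi_i$ preserves the inclusion order and extends uniquely to a measure-preserving Boolean isomorphism $\bar\varphi_i:\tilde D_i\to\tilde R_i$ between the Boolean subalgebras of $\A$ generated by $D_i$ and $R_i$, inducing in particular a measure-preserving bijection between the atoms of $\tilde D_i$ (each a union of some $B_j$'s) and the atoms of $\tilde R_i$.

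The third and main step is the construction of the refinement $\A'$. For each pair $(P,Q)$ with $Q=\bar\varphi_i(P)$ arising from some $\varphi_i$, the sets $P$ and $Q$ have equal measure but are composed of possibly different atoms $B_j$. I would split each $B_j$ into finitely many pieces of carefully chosen measures (summing to $\mu(B_j)$) so that for every such pair $(P,Q)$, the multiset of piece-measures inside $P$ equals the multiset inside $Q$. Once this is arranged, a measure-preserving bijection between the pieces of $P$ and those of $Q$ exists for every such $P$; gluing these bijections across the atoms of $\tilde D_i$ yields a measure-preserving permutation of the atoms of $\A'$ whose induced automorphism of $\A'$ extends $\varphi_i$.

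The main obstacle is this simultaneous combinatorial refinement step. Solving it amounts to a finite system of multiset-equality constraints indexed by the finitely many pairs $(i,P)$, and I would resolve it by a Hall-type marriage argument on an appropriately constructed bipartite graph, exploiting the non-atomicity of the measure algebra, which gives full freedom to choose the piece measures subject only to summing correctly on each $B_j$.
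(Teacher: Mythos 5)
Your overall reduction is sound and essentially parallels the paper: you correctly identify that the problem reduces to finding a common refinement $\A'$ of the finite Boolean algebra $\A$ in which, for each partial automorphism $\varphi_i$ and each pair $(P,Q)$ with $Q=\bar\varphi_i(P)$, the multisets of atom-measures inside $P$ and inside $Q$ coincide (the paper calls this ``$P$ and $Q$ are identically partitioned by $\A'$''). Your preliminary observation that an automorphism of a finite Boolean subalgebra, in the sense of the structure $(\Delta,\emptyset,d_{\malg})$, must be a measure-preserving permutation of atoms is a correct and useful step which the paper leaves implicit.

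However, the crucial step --- producing the refinement --- is exactly where you stop: you name it ``the main obstacle'' and then dispose of it with ``a Hall-type marriage argument on an appropriately constructed bipartite graph,'' without saying what the bipartite graph is or why a matching would exist. I do not see how Hall's theorem is the right tool here. The unknowns are the real-valued measures of the pieces of each atom, subject to sum constraints $\sum_l x_{jl}=\mu(B_j)$ and a family of \emph{multiset-equality} constraints; this is a constraint-satisfaction problem over the reals, not a bipartite matching problem, and constraints interact across atoms (for instance, with $\mu(B_1)=1/2$, $\mu(B_2)=\mu(B_3)=1/4$ and the single constraint $B_1\equiv B_2\cup B_3$, the partition of $B_1$ is forced to be the disjoint union of the partitions of $B_2$ and $B_3$, so choices cascade). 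Nonatomicity does give you total freedom in how to cut an individual atom once its target piece-measures are fixed, but it does not by itself tell you how to \emph{choose} those measures so that all multiset constraints close up simultaneously, particularly when atom measures are irrational and therefore no uniform grid is available. The paper resolves this with two ideas you would need to supply: (1) an inductive construction processing one constraint pair $(A_i,B_i)$ at a time while maintaining the invariant that each step is a ``good'' extension, meaning every two same-measure atoms of $\A_i$ are identically partitioned by $\A_{i+1}$ --- this invariant is what guarantees previously achieved constraints survive later refinements; and (2) a concrete mechanism for each step, namely pushing the two restricted algebras $\A_i\restriction A_i$ and $\A_i\restriction B_i$ to a common space $U$, crossing them as \emph{stochastically independent} subalgebras, and pulling the product algebra back to $A_i\cup B_i$ (and partitioning the remaining atoms in a matching way). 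The independence trick is what makes the piece-measures inside $A_i$ and $B_i$ come out equal automatically. As written, your proposal identifies the right target but does not contain the argument that actually achieves it.
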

\begin{proof}
  Note that finitely generated substructures of $\MALG$ are
  finite subalgebras. Thus, to show locally finite
  automorphism we need to prove the following. For every
  finite subalgebra $\A\subseteq\malg$ there exists a finite
  algebra $\B\subseteq\malg$ with $\A\subseteq\B$ such that
  every partial automorphism of $\A$ extends to an
  automorphism of $\B$. To see this, we need a couple of
  notions. Given finite $\A\subseteq\B\subseteq\malg$ and
  $A_1,A_2\in\A$ say that $A_1$ and $\A_2$ \textit{are
    identically partitioned by} $\B$ if the sets
  $\{\mu(A_1\cap B):B\in\at(\B),B\subseteq A_1\}$ and
  $\{\mu(A_2\cap B):B\in\at(\B),B\subseteq A_2\}$ (both
  counted with repetitions) are equal (up to a
  permutation). Note that if $\A\subseteq\B\subseteq\malg$
  are finite and such that every $A_1,A_2\in\A$ of the same
  measure are identically partitioned by $\B$, then any
  partial automorphism of $\A$ extends to an automorphism of
  $\B$. Moreover, it is enough to guarantee this for $A_1$
  and $A_2$ disjoint.

  Say that a finite extension $\A\subseteq\B$ is
  \textit{good} if every two atoms of $\A$ of the same
  measure are identically partitioned by $\B$. Note that
  this is a transitive relation and if $\A\subseteq\B$ is
  good and $A_1,A_2\in\A$ are identically partitioned by
  $\A$, then $A_1$ and $A_2$ are identically partitioned by
  $\B$.

  Enumerate as $((A_i,B_i):1\leq i< N)$ the set of all
  pairs $A,B$ of disjoint sets in $\A$ of the same
  measure. By induction on $i\leq N$ construct a sequence of
  finite subalgebras $\A_i\subseteq\malg$ with $\A_0=\A$
  such that for every $1\leq i\leq N$ we have
  \begin{itemize}
  \item $A_{i-1}\subseteq\A_i$ is good
  \item $A_i$ and $B_i$ are identically partitioned by
    $\A_{i+1}$.
  \end{itemize}
  After this is done, the algebra $\B=\A_N$ will be as
  needed. 

  It is enough to describe the induction step construction
  of $\A_{i+1}$ from $\A_i$.  Note first that (by shrinking
  $A_i$ and $B_i$ if neccessary) we can assume that for
  every $A,B\in\at(\A_i)$ with $A\subseteq A_i$ and
  $B\subseteq B_i$ we have $\mu(A)\not=\mu(B)$. Write
  $R=\{\mu(A): A\in\at(\A_i), A\subseteq A_i\}$ and
  $S=\{\mu(B): B\in\at(\A_i), B\subseteq B_i\}$, so that
  $R\cap S=\emptyset$. Write $a=\mu(A_i)=\mu(B_i)$ and let
  $U\subseteq X$ be a measurable set of measure $a$. Let
  $\C_1$ be the algebra of subsets of $A_i$ equal to
  $\A_i\restriction A_i$ and $\C_2$ be the algebra of
  subsets of $B_i$ equal to $\A_i\restriction B_i$. Find two
  algebras $\C_1'$ and $\C_2'$ of subsets of $U$ such that
  $\C_1'$ is isomorphic to $\C_1$, $\C_2'$ is isomorphic to
  $\C_2$ and $\C_1',\C_2'$ are (stochastically)
  independent. Fix measure-preserving bijections
  $\varphi:A_i\to U$, $\psi:B_i\to U$ such that $\varphi$
  maps $\C_1$ to $\C_1'$ and $\psi$ maps $\C_2$ to
  $\C_2'$. Write $\C'$ for the algebra of subsets of $U$
  generated by $\C_1'$ and $\C_2'$ and let $\C$ be the
  algebra of subsets of $A_i\cup B_i$ generated by the
  preimages $\varphi^{-1}(C)$ and $\psi^{-1}(C)$ for
  $C\in\C'$.  Note that the sets $A_i$ and $B_i$ are
  identically partitioned by the algebra generated by $\A_i$
  and $\C$. Note also that if $C\subseteq A_i$ is an atom of
  $\A_i$ of measure $r$, then $C$ is partitioned by $\C$
  into sets of measures $r\cdot\mu(D)$ for $D\in\at(\C_2)$
  and analogously, if $E\subseteq B_i$ is an atom of $\A_i$
  of measure $s$, then $D$ is partitioned by $\C$ into sets
  of measures $s\cdot\mu(F)$ for $F\in\at(\C_2)$. In order
  to construct a good extension of $\A_i$, partition every
  atom $A\in\at(\A_i)$ that is disjoint from $A_i\cup B_i$
  as follows:
  \begin{itemize}
  \item[(i)] if $\mu(A)\in R$, then partition $A$ into sets of
    measures $\mu(A)\cdot\mu(D)$, for $D\in\at(\C_2)$,
  \item[(ii)] if $\mu(A)\in S$, then partition $A$ into sets
    of measures $\mu(A)\cdot\mu(F)$, for $F\in\at(\C_1)$.
  \end{itemize}
  Let $\A_{i+1}$ be an extension of $\A_i$ generated by all
  the partitions as in (i) and (ii) above and by $\C$. Now
  $\A_{i+1}$ is a good extension of $\A_i$ and the sets
  $A_i$ and $B_i$ are identically partitioned by
  $\A_{i+1}$. This ends the construction and the proof.
\end{proof}

\begin{lemma}
  The measure algebra $\MALG$ has the extension property.
\end{lemma}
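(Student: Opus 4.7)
The plan is to amalgamate $B$ and $C$ over $A$ by placing a copy $C'$ of $C$ stochastically independently from $B$ over each atom of $A$, and then to verify that stochastic independence in the probabilistic sense gives the abstract independence $B \ind_A C'$ from the definition.

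Working atomwise, for each atom $a \in \at(A)$ let $b_1^a, \ldots, b_{k(a)}^a$ enumerate the atoms of $B$ contained in $a$ and $c_1^a, \ldots, c_{l(a)}^a$ the atoms of $C$ contained in $a$. Since $\malg$ is atomless, we can partition each $b_i^a$ into measurable sets $b_{i,j}^a$ of measure $\mu(b_i^a)\mu(c_j^a)/\mu(a)$. Set $c_j'^a = \bigcup_{i} b_{i,j}^a$ and let $C'$ be the finite subalgebra generated by $A$ together with all the $c_j'^a$. Then the atoms of $C'$ inside $a$ have the same measures as the atoms of $C$ inside $a$, so the assignment $c_j^a \mapsto c_j'^a$ extends to an isomorphism of the subalgebras generated by $A \cup C$ and $A \cup C'$ fixing $A$, which (since quantifier-free type over $A$ in $\malg$ is determined by the measures of Boolean combinations with $A$) gives $C' \equiv_A C$.

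The subalgebra generated by $B$ and $C'$ has atoms $b_i^a \cap c_j'^a = b_{i,j}^a$ of measure $\mu(b_i^a)\mu(c_j^a)/\mu(a)$; this is the content of the construction. To verify $B \ind_A C'$, suppose $\varphi\colon B \to B$ and $\psi\colon C' \to C'$ are automorphisms restricting to the same automorphism $\alpha$ of $A$. Define a map on atoms of the algebra generated by $B \cup C'$ by $b_i^a \cap c_j'^a \mapsto \varphi(b_i^a) \cap \psi(c_j'^a)$. This is well-defined because both factors lie in $\alpha(a)$, and it is measure-preserving since
\[
\mu\bigl(\varphi(b_i^a) \cap \psi(c_j'^a)\bigr) = \frac{\mu(b_i^a)\mu(c_j^a)}{\mu(a)} = \mu(b_i^a \cap c_j'^a),
\]
where the first equality uses that the atoms of $B$ and of $C'$ inside $\alpha(a)$ are also stochastically independent (they are the images under $\varphi$ and $\psi$ of atoms inside $\alpha^{-1}(\alpha(a)) = a$, and $\alpha$ preserves measure). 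The extension of this map to the generated subalgebra is the required common extension of $\varphi$ and $\psi$.

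The main (only mild) obstacle is translating between the probabilistic notion of independence used in the construction and the abstract notion in the definition of $\ind_A$; the verification above does exactly this, by reducing both to the factorisation $\mu(b \cap c') = \mu(b)\mu(c')/\mu(a)$ inside each atom of $A$. No genuine amalgamation difficulty arises because $\malg$ is atomless, so stochastic independence can always be realised.
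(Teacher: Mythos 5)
Your proposal is correct and follows essentially the same route as the paper: both amalgamate by placing a stochastically independent copy of $C$ over each atom of $A$ and then observe that stochastic independence yields $\ind_A$. The paper dismisses the final verification with ``it is easy to see that $\varphi(C)\ind_A B$''; your measure computation on the atoms $\varphi(b_i^a)\cap\psi(c_j'^a)$ is exactly the content of that remark.
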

\begin{proof}
  Suppose $A,B,C$ are finitely generated subalgebras of
  $\MALG$ with $A\subseteq B\cap C$. Write $A_1,\ldots,A_n$
  for the set of atoms of $A$. Find an automorphism
  $\varphi$ of the measure space which fixes
  $A_1,\ldots,A_n$ and within each $A_i$ sends the atoms of
  $C$ contained in $A_i$ to sets which are (stochastically)
  independent from the atoms of $B$ contained in $A_i$. It
  is easy to see that $\varphi(C)\ind_A B$.
\end{proof}

To see that $\malg$ admits isolated sequences, we need to
understand which quantifier-free $\varepsilon$-types are
realized over finite tuples in $\malg$.

\begin{definition}
  Suppose $k\in\N$ and $\P=(A_1,\ldots,A_k)$ is a partition
  of $X$ into positive measure sets. Let $E=(e_{ij}:1\leq
  i,j\leq k)$ be a matrix of reals. Say that $E$ is
  \textit{$\P$-additive} if the following conditions hold:
  \begin{itemize}
  \item $e_{ii}\geq0$ and $0\leq e_{ij}\leq
    \mu(A_i)+\mu(A_j)$ for every $i,j\leq k$,
  \item the following equations are satisfied:
    \begin{eqnarray*}
      e_{ii}=\sum_{j\not=i}\mu(A_i)+\mu(A_j)-e_{ij}\\
      e_{ii}=\sum_{j\not=i}\mu(A_i)+\mu(A_j)-e_{ji}
    \end{eqnarray*}
  \end{itemize}

\end{definition}

\begin{claim}\label{measure.real}
  Suppose $\P=(A_1,\ldots,A_k)$ is a partition of $X$ into
  positive measure sets and $\varphi\in\aut(\malg)$. Let
  $e_{ij}=d_{\malg}(A_i,\varphi(A_j))$. Then the matrix
  $E=(e_{ij}:1\leq i,j\leq k)$ is $\P$-additive.
\end{claim}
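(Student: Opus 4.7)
The plan is to pass from symmetric difference distances to the overlap measures
\[
m_{ij} := \mu(A_i \cap \varphi(A_j)), \qquad 1 \leq i,j \leq k,
\]
where the partition structure becomes transparent. The identity $\mu(A\Delta B) = \mu(A) + \mu(B) - 2\mu(A\cap B)$ together with the fact that $\varphi$ is measure-preserving gives
\[
e_{ij} = \mu(A_i) + \mu(A_j) - 2 m_{ij}, \qquad m_{ij} = \tfrac{1}{2}\bigl(\mu(A_i) + \mu(A_j) - e_{ij}\bigr).
\]
From this, the inequalities in the first clause of $\P$-additivity are immediate: $0 \leq m_{ij} \leq \min(\mu(A_i),\mu(A_j))$ forces $0 \leq e_{ij} \leq \mu(A_i) + \mu(A_j)$, and $e_{ii}\geq 0$ is a special case.

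For the two equations, I would use that $\varphi$ sends the partition $(A_i)_{i\leq k}$ to the partition $(\varphi(A_j))_{j\leq k}$ of $X$ into sets of the same respective measures. Note that although the structure $\malg$ has signature $(\Delta,\emptyset)$ only, the containment relation is definable from $\Delta$ and the metric via $A\subseteq B \iff \mu(A) + \mu(A\Delta B) = \mu(B)$, so $\varphi$ preserves all Boolean operations and hence partitions. Summing over $i$ gives
\[
\sum_i m_{ij} = \mu\Bigl(\bigcup_i A_i \cap \varphi(A_j)\Bigr) = \mu(\varphi(A_j)) = \mu(A_j),
\]
and symmetrically $\sum_j m_{ij} = \mu(A_i)$.

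Combining $\sum_j m_{ij} = \mu(A_i)$ with $m_{ii} = \mu(A_i) - e_{ii}/2$ yields $\sum_{j\neq i} m_{ij} = e_{ii}/2$, and substituting the formula for $m_{ij}$ in terms of $e_{ij}$ gives exactly the first equation
\[
e_{ii} = \sum_{j\neq i}\bigl(\mu(A_i) + \mu(A_j) - e_{ij}\bigr).
\]
The second equation follows identically from the column sum $\sum_j m_{ji} = \mu(\varphi(A_i)) = \mu(A_i)$, or equivalently by applying the first identity to $\varphi^{-1}$ using $e_{ji} = d_\malg(A_j,\varphi(A_i)) = d_\malg(\varphi^{-1}(A_j),A_i)$. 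There is no real obstacle; the only subtlety is confirming that $\varphi$ preserves partitions, which is handled by the definability of containment above.
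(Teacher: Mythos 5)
Your proof is correct and follows essentially the same route as the paper's: pass from symmetric-difference distances $e_{ij}$ to overlap measures, then exploit that both $(A_i)_i$ and $(\varphi(A_j))_j$ partition $X$ to derive the row and column identities. The only stylistic difference is that you stay at the level of the measure algebra and justify preservation of the partition via definability of $\subseteq$, whereas the paper passes (via Sikorski duality) to an underlying measure-preserving bijection $f$ of $X$ and works pointwise; both are fine, and the arithmetic is identical.
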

\begin{proof}
  Let $f:X\to X$ be a measure-preserving bijection that
  induces $\varphi$. For $i\not=j$ write
  $\varepsilon_{ij}=\mu(f(A_i)\cap A_j)$. Note
  that $$e_{ij}=\mu(f(A_i)\Delta
  A_j)=\mu(A_j)+\mu(A_j)-2\varepsilon_{ij}.$$ This implies
  that $$e_{ii}=\mu(f(A_i)\Delta
  A_i)=2\sum_{j\not=i}\varepsilon_{ij}=\sum_{j\not=i}\mu(A_i)+\mu(A_j)-e_{ij}$$
  On the other hand, $$e_{ii}=\mu(f(X\setminus
  A_i)\Delta(X\setminus
  A_i))=2\sum_{j\not=i}\varepsilon_{ji}=\sum_{j\not=i}\mu(A_i)+\mu(A_j)-e_{ji}.$$
\end{proof}

\begin{lemma}\label{meas.sat}
  Let $\bar a=(A_1,\ldots,A_k)$ be a partition of $X$ into
  positive measure sets and let $p=\mtp(\bar a)$. Suppose
  $C_1,\ldots,C_k$ are such that $C_i\subseteq A_i$ for each
  $i\leq k$ and
  $\mu(C_1)=\ldots=\mu(C_k)>0$. Let $$M=\{(B_1,\ldots,
  B_k)\in p(\MALG): \forall i\not=j\leq k\ \ B_i\cap
  A_j\subseteq C_j\ \wedge\ A_i\setminus B_i\subseteq
  C_i\}.$$ Then $M$ is relatively $2\mu(C_1)$-saturated over
  $\bar a$.
\end{lemma}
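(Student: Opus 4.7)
The plan is to take an arbitrary realization $\bar b=(B_1,\ldots,B_k)\in\MALG^k$ of a $2\mu(C_1)$-quantifier-free type $q$ over $\bar a$ and to construct by hand a realization $\bar b'\in M$ of the same type. The first observation I would make is that, since $\bar b\in p(\MALG)$, the tuple $\bar b$ is itself a partition of $X$ with $\mu(B_j)=\mu(A_j)$ for each $j$; consequently the Boolean algebra generated by $\bar a$ and $\bar b$ has atoms precisely the nonempty sets $A_i\cap B_j$. Every term of the language in parameters $\bar a,\bar b$ is a $\Delta$-sum of such atoms whose distance to $\emptyset$ is the sum of the corresponding atom measures, so the full quantifier-free type of $\bar b$ over $\bar a$ is encoded by the $k\times k$ matrix $e_{ij}=\mu(A_i\cap B_j)$, subject to the row/column constraints $\sum_j e_{ij}=\mu(A_i)$, $\sum_i e_{ij}=\mu(A_j)$. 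It therefore suffices to produce $\bar b'\in M$ realizing the same matrix.

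The key translation step is the following consequence of the hypothesis on $q$: from $d_\MALG(B_i,A_i)=2(\mu(A_i)-e_{ii})<2\mu(C_1)$ one obtains, for every $i\leq k$,
\[
\sum_{j\neq i}e_{ij}\;=\;\mu(A_i)-e_{ii}\;<\;\mu(C_1)\;=\;\mu(C_i).
\]
This is exactly the slack needed to fit the off-diagonal mass of the $i$-th row of $(e_{ij})$ inside $C_i$, and the factor $2$ in the threshold is precisely the one that appears in $\mu(A_i\Delta B_i)=2\mu(A_i\setminus B_i)$ under the partition constraint $\mu(A_i)=\mu(B_i)$.

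Given this slack, the construction is direct: for each $i$, use non-atomicity of the measure on $C_i$ to partition $C_i$ into measurable pieces $D_{ij}$ (for $j\neq i$) of measure $e_{ij}$, together with a remainder $R_i\subseteq C_i$ of nonnegative measure $\mu(C_i)-\sum_{j\neq i}e_{ij}$. Define
\[
B_j'\;=\;\bigsqcup_{i\neq j}D_{ij}\;\sqcup\;(A_j\setminus C_j)\;\sqcup\;R_j.
\]
A short routine check confirms that $(B_1',\ldots,B_k')$ is a partition of $X$ with $\mu(A_i\cap B_j')=e_{ij}$ for all $i,j$ (the equality of row and column marginals $\mu(A_j)-e_{jj}$ is what guarantees $\mu(B_j')=\mu(A_j)$), so $\bar b'$ realizes the same quantifier-free type over $\bar a$ as $\bar b$ and in particular realizes $q$; and by construction $B_i'\cap A_j=D_{ji}\subseteq C_j$ for $i\neq j$ and $A_i\setminus B_i'=\bigsqcup_{j\neq i}D_{ij}\subseteq C_i$, witnessing $\bar b'\in M$.

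The main (and essentially the only) conceptual step is the translation between the threshold $2\mu(C_1)$ and the slack $\mu(C_i)$ in the second paragraph; once that is in place, the remaining work is pure bookkeeping with Boolean combinations of measurable sets inside a non-atomic measure algebra.
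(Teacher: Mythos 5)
Your proposal is correct and follows the paper's argument in its essentials: encode the quantifier-free type of $\bar b$ over $\bar a$ by the $k\times k$ matrix of pairwise measures, observe that the $\varepsilon$-type hypothesis bounds the off-diagonal mass of each row by $\mu(C_i)$, and relocate that mass inside the $C_i$'s to build $\bar b'\in M$. The only (cosmetic) difference is that you parametrize by the intersection matrix $\mu(A_i\cap B_j)$, whose marginal constraints are immediate from the partition structure, whereas the paper uses the symmetric-difference matrix $\mu(E_i\Delta A_j)$ and extracts the same constraints from the separate $\P$-additivity claim (Claim \ref{measure.real}).
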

\begin{proof}
  Write $\varepsilon=2\mu(C_1)$. Let
  $E_1,\ldots,E_k\in\malg$ be such that $\mu(E_i\Delta
  A_i)<\varepsilon$ and $\mtp(E_1,\ldots,E_k)=p$. Write
  $e_{ij}=\mu(E_i\Delta A_j)$ and note that
  $E=(e_{ij}:i,j\leq k)$ is $\bar a$-additive by Claim
  \ref{measure.real}. We need to find $(B_1,\ldots, B_k)\in
  M$ such that $\mu(B_i\Delta A_j)=e_{ij}$ for each $i,j\leq
  k$. For each $i\not=j$ write
  $$\varepsilon_{ij}=\frac{1}{2}(\mu(A_i)+\mu(A_j)-e_{ij})$$
  and note that by $\bar a$-additivity we have
  $$\sum_{j\not=i}\varepsilon_{ji}=\frac{1}{2}e_{ii}<\frac{1}{2}\varepsilon=\mu(C_i)$$
  Thus, we can find disjoint measurable sets
  $D_{ji}\subseteq C_i$ such that
  $\mu(D_{ji})=\varepsilon_{ji}$. Write
  $D_i=\bigcup_{j\not=i}D_{ji}$ and note that
  $\mu(D_i)=\frac{1}{2}e_{ii}$. Put $B_i=A_i\setminus
  D_i\cup\bigcup_{j\not=i}D_{ij}$ and note that since (by
  $\bar
  a$-additivity) $$\sum_{j\not=i}\varepsilon_{ji}=\sum_{j\not=i}\varepsilon_{ij}$$
  we have that $\mu(B_i)=\mu(A_i)$. The sets $B_i$ are
  pairwise disjoint since the sets $D_{ij}$ are disjoint and
  so $\mtp(B_1,\ldots,B_k)=\mtp(A_1,\ldots,A_k)$. Also, we
  have $$\mu(B_i\Delta
  A_j)=\mu(B_i)+\mu(A_j)-2\varepsilon_{ij}=e_{ij}$$ if
  $j\not=i$ and $$\mu(B_i\Delta
  A_i)=2\sum_{j\not=i}\varepsilon_{ij}=e_{ii}.$$ This shows
  that $(B_1,\ldots,B_k)$ is as needed and $M$ is relatively
  $\varepsilon$-saturated over $\bar a$.
\end{proof}

\begin{definition}
  Let $\bar a=(A_1,\ldots,A_k)$ be a tuple in $\MALG$ such
  that $\bar a$ is a partition of $X$ into positive measure
  sets. Write $p$ for $\mtp(\bar a)$.  Given a sequence
  $\bar a_n=(A^n_1,\ldots,A^n_k)$ in $p$ say that it is
  \textit{weakly independent} if there is a sequence
  $((C_1^n,\ldots,C_k^n):n\in\N)$ such that
  \begin{itemize}
  \item $C_i^n\subseteq A_i^n$ for each $i\leq k$ and
    $n\in\N$,
  \item $\mu(C_1^n)=\ldots=\mu(C_k^n)>0$ for each $n\in\N$,
  \item all sets $\{C_i^n:i\leq k,n\in\N\}$ are pairwise
    disjoint,
  \item if $m\not=n$, then $C_i^n\subseteq A^m_1$ for every
    $i\leq k$.
  \end{itemize}
\end{definition}

\begin{lemma}\label{stoch.indep}
  If a sequence $\bar a_n$ in $\MALG$ is weakly independent,
  then it is isolated.
\end{lemma}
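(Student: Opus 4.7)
The plan is to use Lemma \ref{meas.sat} to produce candidates for the saturated sets $Y_n$, and then build a single measure--preserving bijection of $X$ out of local bijections supported on the pairwise disjoint ``swap regions'' $D_n=\bigcup_{j\leq k}C_j^n$. Concretely, let $(C_1^n,\ldots,C_k^n)$ witness weak independence, set $\varepsilon_n=2\mu(C_1^n)$, and define
\[
  Y_n=\{(B_1,\ldots,B_k)\in p(\MALG):\ B_i\cap A_j^n\subseteq C_j^n\ \wedge\ A_i^n\setminus B_i\subseteq C_i^n\text{ for all }i\neq j\}.
\]
By Lemma \ref{meas.sat}, each $Y_n$ is relatively $\varepsilon_n$-saturated over $\bar a_n$. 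Given any sequence $\bar b_n=(B_1^n,\ldots,B_k^n)\in Y_n$ with $\mtp(\bar b_n)=\mtp(\bar a_n)$ and $d_\MALG(\bar a_n,\bar b_n)<\varepsilon_n$, we must produce $\varphi\in\aut(\MALG)$ with $\varphi(\bar a_n)=\bar b_n$ for all $n\in\N$.

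The first step is to observe that the defining conditions on $Y_n$ force $B_i^n$ to coincide with $A_i^n$ off $D_n$: indeed $A_i^n\setminus C_i^n\subseteq B_i^n$ and $B_i^n\cap(A_j^n\setminus C_j^n)=\emptyset$ for $j\neq i$, so $B_i^n\setminus D_n=A_i^n\setminus D_n$. Inside $D_n$ the $C_i^n$ partition $D_n$ into $k$ pieces of equal measure $\mu(C_1^n)$, and a short computation using $\mu(A_i^n)=\mu(B_i^n)$ shows that the sets $B_i^n\cap D_n$ also form a partition of $D_n$ into $k$ pieces, each of measure $\mu(C_1^n)$. So one can choose a measure-preserving bijection $f_n$ of $D_n$ sending $C_i^n$ to $B_i^n\cap D_n$ for each $i\leq k$.

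Next, since the sets $D_n$ are pairwise disjoint, one defines $f:X\to X$ by $f\restriction D_n=f_n$ and $f=\id$ on the complement $X\setminus\bigcup_n D_n$. This $f$ is a measure-preserving bijection and induces an automorphism $\varphi$ of $\MALG$. It remains to verify $\varphi(A_i^n)=B_i^n$ for every $n$ and $i$. The main point is that the condition $C_j^m\subseteq A_1^n$ for $m\neq n$ decouples the swaps cleanly: for $i\neq 1$, the set $A_i^n$ is disjoint from every $D_m$ with $m\neq n$, so $A_i^n=(A_i^n\setminus D_n)\cup C_i^n$ and $f$ maps this to $(A_i^n\setminus D_n)\cup(B_i^n\cap D_n)=B_i^n$, using the fact that $B_i^n$ is disjoint from all $D_m$ with $m\neq n$ (since $B_i^n$ agrees with $A_i^n$ off $D_n$ and $A_i^n$ is disjoint from $A_1^n\supseteq\bigcup_{m\neq n}D_m$). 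For $i=1$, the set $A_1^n$ contains $\bigcup_{m\neq n}D_m$; within each such $D_m$ the bijection $f_m$ permutes $D_m$ to itself (it sends the partition $(C_j^m)$ to the partition $(B_j^m\cap D_m)$, whose union is again $D_m$), so these regions are preserved setwise; combined with $f(C_1^n)=B_1^n\cap D_n$ and the identity on the rest, this gives $f(A_1^n)=B_1^n$.

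The single place where one has to be careful is the bookkeeping in the last step: the interplay between the swap region $D_n$ of the $n$-th partition and the swap regions $D_m$ for $m\neq n$ only works because, by weak independence, the latter are absorbed entirely in $A_1^n$, so only the first atom has to ``accommodate'' the effects of the other bijections. Once this is checked, $\varphi$ is the required automorphism and the sequence is isolated with parameters $\varepsilon_n$ and $Y_n$.
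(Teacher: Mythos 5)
Your proof is correct and follows essentially the same route as the paper's: take the sets $M_n$ (your $Y_n$) from Lemma \ref{meas.sat}, observe that the target partition agrees with $\bar a_n$ outside $D_n=\bigcup_i C_i^n$, build a measure-preserving bijection supported on each $D_n$, and glue using disjointness of the $D_n$'s together with the weak-independence clause $C_i^m\subseteq A_1^n$ to check the resulting $f$ sends each $A_i^n$ to $B_i^n$. The only difference is cosmetic: the paper specifies the local bijection by a more explicit piece-by-piece swap of the $D^n_{ij}$'s against auxiliary sets $E^n_{ij}\subseteq A^n_i\setminus B^n_i$, whereas you simply take any measure-preserving bijection of $D_n$ carrying the equal-measure partition $(C_i^n)_i$ onto $(B_i^n\cap D_n)_i$, which is a slightly cleaner packaging of the same idea.
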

\begin{proof}
  Let $k\in\N$ be such that each $\bar
  a_n=(A_1^n,\ldots,A_k^n)$ is an $k$-element partition of
  $X$. Suppose $((C_1^n,\ldots,C_k^n):n\in\N)$ witnesses
  that the sequence is weakly independent and let
  $\varepsilon_n=2\mu(C_1^n)$. Write $p$ for the
  quantifier-free type of $\bar a_n$ and let
  $$M_n=\{(B_1,\ldots, B_k)\in p(\MALG): \forall i\not=j\leq
  k\ \ B_i\cap A^n_j\subseteq C^n_j\ \wedge\  A^n_i\setminus
  B_i\subseteq C^n_i\}.$$

  We claim that the sequence of sets $M_n$ together with
  $\varepsilon_n$ witness that the sequence $\bar a_n$ is
  isolated. The fact that $M_n$ is relatively
  $\varepsilon_n$-saturated over $\bar a_n$ follows directly
  from Lemma \ref{meas.sat}.

  Suppose now that $\bar b_n=(B^n_1,\ldots,B^n_k)$ are such
  that each $\bar b_n$ belongs to $M_n$ and $\mtp(\bar
  b_n)=\mtp(\bar a_n)$. We need to find
  $\varphi\in\aut(\malg)$ such that $\varphi(\bar a_n)=\bar
  b_n$ for each $n\in\N$. For every $n\in\N$ and $i\not=j$
  let $D^n_{ij}=B^n_i\cap A^n_j\subseteq C^n_j$ and let
  $\varepsilon^n_{ij}=\mu(D^n_{ij})$. For each $i\leq k$ write
  $D^n_i=\bigcup_{j\not=i}D^n_{ji}$. Let
  $E^n_i=A^n_i\setminus B^n_i\subseteq C^n_i$ and note that
  $$\mu(E^n_i)=\sum_{j\not=i}\varepsilon^n_{ij}=\sum_{j\not=i}\varepsilon^n_{ji}$$ since
  $\mu(B^n_i)=\mu(A^n_i)$. For every $j\not=i$ find
  measurable sets $E^n_{ji}\subseteq E^n_i$ such that
  $E^n_i=\bigcup_{j\not=i}E^n_{ji}$ and
  $\mu(E^n_{ji})=\varepsilon^n_{ji}$. Now, for each $n\in\N$
  find a measure-preserving bijection $$f_n:\bigcup_{i\leq
    k}D^n_i\cup E^n_i\to\bigcup_{i\leq k}D^n_i\cup E^n_i$$
  such that
  \begin{itemize}
  \item $f_n(E^n_i)=D^n_i$ for each $i\leq k$,
  \item $f_n(D^n_{ij})=E^n_{ij}$ for every $i\not=j\leq k$
  \end{itemize}
  Let $f:X\to X$ be a measure-preserving bijection such that
  $f\supseteq f_n$ for each $n\in\N$ and $f$ is equal to the
  identity on the complement of the set
  $\bigcup_{n\in\N}\bigcup_{i\leq k}D^n_i\cup E^n_i$. Note
  that for $m\not=n$ and $i\leq k$, the set $D^m_i\cup
  E^m_i$ is contained in $C^m_i$, so the function $f_m$ maps
  $A^n_1$ into itself and the domain of $f_m$ is disoint
  from $A^m_i$ for $i>1$. This implies that $f(A^n_i)=B^n_i$
  and hence the autmomorphism of $\malg$ induced by $f$ is
  as needed. This ends the proof.
\end{proof}

\begin{proposition}
  The measure algebra $\MALG$ admits isolated sequences.
\end{proposition}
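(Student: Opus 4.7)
The plan is to mirror the strategy used for the Urysohn space: reduce to a nice form, pick a well-chosen limit point $\bar a_\infty$, and construct a sequence in $Z$ approaching it with enough structure to be weakly independent. By Lemma~\ref{stoch.indep}, weak independence implies isolation, so this will suffice.

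First I would reduce to the case where $\bar a=(A_1,\ldots,A_k)$ is a partition of $X$ into positive measure sets. If $\bar a$ has repetitions, the null element, or is not a partition, pass instead to the tuple of atoms (in a fixed enumeration) of the finite subalgebra generated by $\bar a$. The map sending a $k$-tuple to its atom tuple is a homeomorphism of $p(\MALG)$ onto its image in the partition type space, and a weakly independent sequence of atom partitions pushes forward to a weakly independent sequence of the original tuples via the continuous merging map. Hence, I may assume $\bar a$ is a partition into positive-measure atoms.

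Next, since $p(\MALG)$ is Polish, localize $Z$ to a nonempty open $U\subseteq p(\MALG)$ in which $Z$ is nonmeager in every nonempty relatively open subset. Fix $\bar a_\infty=(A_1^\infty,\ldots,A_k^\infty)\in U$, pick a summable sequence $c_n>0$ with $\sum_n k c_n$ much smaller than $\min_i\mu(A_i^\infty)$, and pick pairwise disjoint measurable sets $F_i^n\subseteq A_1^\infty$, $G_i^n\subseteq A_i^\infty$ (for $i>1$, $n\in\N$), and $H^n\subseteq A_1^\infty$ of common measure $c_n$, all pairwise disjoint in $X$. Write $R_n=\bigcup_{i>1}(F_i^n\cup G_i^n)$; the $R_n$ are pairwise disjoint. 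Let $\bar a_n^*\in p(\MALG)$ be the target partition equal to $\bar a_\infty$ outside $R_n$ and swapping $F_i^n$ with $G_i^n$ inside $R_n$; then $\bar a_n^*\to\bar a_\infty$, so for all large $n$ we have $\bar a_n^*\in U$.

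The heart of the argument is to find $\bar a_n\in Z$ that agrees with $\bar a_n^*$ on $R_n$ and agrees with $\bar a_\infty$ on $\bigcup_{m\neq n,\,i>1}F_i^m\cup\bigcup_{m\neq n}H^m$; then $C_i^n=F_i^n$ for $i>1$ and $C_1^n=H^n$ will witness weak independence, because $F_i^n$ and $H^n$ lie in $A_1^\infty$ and by construction $\bar a_m$ equals $\bar a_\infty$ (assigning label $1$) on these sets for $m\neq n$. The main obstacle is that this is a closed condition on $\bar a_n$ restricting it on a specific set $T_n$ of positive measure, and closed conditions are typically nowhere dense, so direct Baire category on the nonmeager $Z$ need not find such an $\bar a_n$. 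I would handle this via the Kuratowski--Ulam theorem applied to the decomposition $p(\MALG)\cong(\text{partitions of }T_n\text{ with prescribed marginals})\times(\text{partitions of }X\setminus T_n\text{ with complementary marginals})$: the nonmeagerness of $Z$ in $U$ gives a comeager set of $T_n$-restrictions whose $Z$-slice is nonmeager, and within the subspace of $T_n$-restrictions realizing the prescribed swap/identity pattern (which is itself a positive-dimensional, hence nonmeager, closed subset) some such restriction has a nonmeager slice in $Z$. Picking $\bar a_n\in Z$ from that slice yields the required structure on $T_n$, and the unconstrained behavior of $\bar a_n$ on $X\setminus T_n$ is irrelevant to the weak independence. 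The delicate step is verifying that the Kuratowski--Ulam decomposition applies and that the closed subset of prescribed patterns is indeed nonmeager in the first factor; this will require tracking the marginal constraints, and is where I expect the bulk of the technical bookkeeping.
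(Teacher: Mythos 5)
Your opening reduction (pass to atoms, assume $\bar a$ is a partition into positive-measure pieces, localize $Z$ so it is nonmeager in every nonempty open set) matches the paper exactly, and Lemma~\ref{stoch.indep} is the right target. But the core of your construction has a genuine gap that you yourself partially flag and then do not actually close.

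You want to find $\bar a_n\in Z$ whose restriction to a fixed positive-measure set $T_n$ agrees (up to null sets) with a prescribed partition of $T_n$. As you note, this is a closed condition. Your proposed fix is a Kuratowski--Ulam argument on a ``product'' $p(\MALG)\cong(\text{partitions of }T_n)\times(\text{partitions of }X\setminus T_n)$, together with the claim that the set of admissible $T_n$-restrictions is a ``positive-dimensional, hence nonmeager, closed subset'' of the first factor. That claim is false. Fully prescribing the restriction to $T_n$ determines a \emph{single point} of the first factor (a single partition of $T_n$ up to null sets); even if one only prescribes which atoms receive each $F_i^n$, $G_i^n$, $H^n$, the resulting set is still closed with empty interior, hence meager in the Polish space of partitions of $T_n$. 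So Kuratowski--Ulam cannot produce a point of that set with a nonmeager $Z$-slice: the set of restrictions with nonmeager slice is merely comeager, and a comeager set can miss any fixed nowhere dense set. In short, you correctly identified that closed constraints are nowhere dense, but the Kuratowski--Ulam workaround does not evade the obstacle; it relocates it. A secondary issue is that the proposed decomposition is not an honest product: the marginals on $T_n$ and on $X\setminus T_n$ are coupled, so $p(\MALG)$ is a bundle over the marginal simplex, not a direct product, and Kuratowski--Ulam does not apply as stated.

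The paper's proof avoids hard constraints entirely. It does not fix $\bar a_\infty$ in advance. Instead it builds $(\bar a_n)$ inductively so that $\bar a_{n+1}\in Z$ is chosen from a small open ball around $\bar a_n$ subject only to two \emph{open, dense} conditions: positivity of the measures $\mu(F\cap A^{n+1}_i)$ and $\mu(A^{n+1}_1\cap D^n_i)$ (conditions (a) and (b) in the paper), plus a quantitative proximity bound (condition (iii)) that makes the symmetric differences summable geometrically. The witnessing sets $C^n_i$ are defined a posteriori as tail intersections $D^n_i\cap\bigcap_{m>n}A^m_1$, and their positivity is deduced from (ii) and (iii) by a summation estimate rather than imposed at any finite stage. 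This is what lets nonmeagerness of $Z$ be used at every step: one only ever intersects $Z$ with a nonempty open set, never with a nowhere dense closed set. If you want to salvage your proposal, you would need to replace your exact-agreement requirement with soft requirements of this kind (positive-measure overlap plus summable proximity) and let $\bar a_\infty$ emerge as the limit rather than be chosen first.
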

\begin{proof}
  Suppose $p$ is a quantifier-free $k$-type of a tuple $\bar
  a=(A_1,\ldots,A_k)$ in $\malg$. First note that we can
  assume that the elements of $\bar a$ form a partition of
  $X$ into positive measure sets. Otherwise, one can consider
  the atoms of the algebra generated by $\bar a$ and work
  with a quantifier-free $m$-type $q$ for $m$ equal to the
  number of these atoms. Then, for every $m$-tuple $\bar
  b\in q(M)$ there is a unique tuple $\bar b'\in p(M)$, such
  that the algebras generated by $\bar b$ and $\bar b'$ are
  the same and such that
  \begin{itemize}
  \item if $(\bar b_n:n\in\N)$ is isolated in $q$, then
    $(\bar b_n':n\in\N)$ is isolated in $p$,
  \item the map $\bar b\mapsto \bar b'$ is a homeomorphism
    of $q(M)$ and $p(M)$.
  \end{itemize}

  Now, suppose $Z\subseteq p(\malg)$ is nonmeager and assume
  (restricting to an open subset if neccessary) that $Z$ is
  nonmeager in every nonempty open set. Construct a sequence
  of $(A^n_1,\ldots,A^n_k)\in Z$ and positive measure
  pairwise disjoint sets $D^n_i\subseteq A^n_i$ (for $i\leq
  k$) together with positive reals $\delta^n_i$ such that
  for every $n\in\N$ we have
  \begin{itemize}
  \item[(i)] $D^n_1,\ldots, D^n_k\subseteq A^m_1$ if $m<n$,
  \item[(ii)] $\mu(D^n_i\cap A^{n+1}_1)=\delta^n_i$ for
    every $i\leq k$,
  \item[(iii)] $d_\malg(A^{n+l+1}_k,A^{n+l}_k)<\delta^n_k\slash
    2^{l+1}$ for every $l\geq 1$,
  \item[(iv)] $\mu(\bigcap_{m\leq n}A^m_1\setminus
    D^m_1)>0$.
  \end{itemize}
  After this is done, for every $i\leq k$ and $n\in\N$ put
  $C^n_i=D^n_i\cap\bigcap_{m>n}A^m_1$ and note that by (ii)
  and (iii) above we have that $\mu(C^n_i)>0$ for each
  $i\leq k$ and $n\in\N$. By shrinking the sets $C^n_i$ if
  neccessary, we can assume that
  $\mu(C^n_1)=\ldots=\mu(C^n_k)$. Then the definition of
  $C^n_i$ and condition (i) above imply that if $m\not=n$,
  then $C^n_i\subseteq A^m_1$. Given that $D^n_i$ are
  pariwise disjoint, so are the sets $C^n_i$ and so the
  sequence $(A^n_1,\ldots,A^n_k)$ is weakly independent, as
  witnessed by $C^n_i$ and hence isolated by Lemma
  \ref{stoch.indep}.

  To perform the induction step, suppose we have constructed
  $(A^j_1,\ldots,A^j_k)\in Z$ for $j\leq n$, the sets
  $D^j_i$ for $i\leq n$ and $i\leq k$ as well as
  $\delta^j_i$ for $j\leq n-1$ and $i\leq k$. Consider the
  open set
  \begin{displaymath}
    U=\{(A_1,\ldots,A_k)\in p(\malg): \forall i\leq k\ \
    d_\malg(A_i,A^n_i)\leq\min_{m<n}
    \frac{\delta^m_i}{2^{n-m+1}}\}.  
  \end{displaymath}
  Write also $F=\bigcap_{m\leq n}A^m_1\setminus D^m_1$ and
  note that by the inductive assumption (iv), we have
  $\mu(F)>0$. Using the fact that for any positive measure
  set $E$ and $i\leq k$ the set $\{(A_1,\ldots,A_n)\in
  p(\malg):\mu(A_i\cap E)=0\}$ is closed nowhere dense, find
  $(A^{n+1}_1,\ldots,A^{n+1}_k)\in U\cap Z$ such that
  \begin{itemize}
  \item[(a)] $\mu(F\cap A^{n+1}_i)>0$ for every $i\leq k$,
  \item[(b)] $\mu(A^{n+1}_1\cap D^n_i)>0$ for every $i\leq k$.
  \end{itemize}
  Now, using (a) above, find $D^{n+1}_i\subseteq
  A^{n+1}_i\cap F$ such
  that $$0<\mu(D^{n+1}_i)<\frac{1}{2}\mu(A^{n+1}_i\cap F).$$
  This implies that the inductive condition (iv) will be
  satisfied at the next step. Put
  $\delta^n_i=\mu(A^{n+1}_1\cap D^n_i)$ and note that
  $\delta^n_i>0$ by (b) above. Condition (iii) holds because
  $(A^{n+1}_1,\ldots,A^{n+1}_k)\in U$ and thus, this
  concludes the induction step. This ends the proof.
\end{proof}

\section{The Hilbert space}\label{sec:hilbert}

The orthogonal group $O(\ell_2)$ is the group of
automorphism of the (real) Hilbert space. The Hilbert space
here is treated as the metric structure with the first sort
being $(\ell_2,0,+)$ and the second sort being the real line
with the field structure (including the inverse function
defined on non-zero elements by $x\mapsto x^{-1}$ and
mapping $0$ to $0$, as well as the function $x\mapsto -x$)
and constants for the rationals. We also add to the language
the multiplication by scalars function $\cdot:\R\times
\ell_2\to\ell_2$ (i.e. $(a,v)\mapsto a\cdot v)$ as well as
the inner product function
$\langle\cdot,\cdot\rangle:\ell_2\times\ell_2\to\R$.

Recall that by the Mazur--Ulam theorem \cite{mazur.ulam} any
isometry of a normed vector space which preserves zero, is a
linear isomorphism (in case of the Hilbert space this is
even simpler than the general case of the Mazur--Ulam
theorem), so we could also consider the structure only with
the constant $0$ and the inner product function. Still
another way would be to look at the unit sphere in the
Hilbert space equipped only with the metric (as a metric
space with no additional structure) and then the orthogonal
group would be the group of isometries of the sphere. We
will however, use the above language, as it seems the most
natural, and we will make use of it in order to talk about
substructures of the Hilbert space.

The unitary group $U(\ell_2)$ is the automorphism group of
the complex Hilbert space and the arguments below apply in
the same way to the complex Hilbert space, so we will focus
only of the real Hilbert space.

\begin{claim}
  If $A$ is a finitely generated substructure of the Hilbert
  space, then there exists a countable field $\K\subseteq\R$
  such that $\Q\subseteq\K$ and $A$ is a finite-dimensional
  $\K$-vector space
\end{claim}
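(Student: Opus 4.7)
The plan is straightforward and purely algebraic. Let $A$ be generated by first-sort elements $v_1,\ldots,v_n\in\ell_2$ and second-sort elements $r_1,\ldots,r_m\in\R$. I would first define $\K$ to be the subfield of $\R$ generated by $\Q\cup\{r_1,\ldots,r_m\}\cup\{\langle v_i,v_j\rangle:1\leq i,j\leq n\}$. Since $\K$ is a finitely generated field extension of $\Q$, it is countable. Next, set $V=\lspan_{\K}(v_1,\ldots,v_n)\subseteq\ell_2$, which is a $\K$-vector space of dimension at most $n$.

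The core of the argument is to verify that the pair $(V,\K)$, with $V$ sitting in the first sort and $\K$ in the second sort, is closed under all operations of the language and hence forms a substructure of the Hilbert space. Closure of $V$ under $+$ and of $\K$ under the field operations (including the inverse function sending $0$ to $0$, negation, and the rational constants) is immediate from the definitions. Scalar multiplication $a\cdot v$ with $a\in\K$ and $v\in V$ lies in $V$ by the definition of $\lspan_\K$. The only nontrivial check is the inner product: for $v=\sum_i a_iv_i$ and $w=\sum_j b_jv_j$ with $a_i,b_j\in\K$, bilinearity gives
\[
\langle v,w\rangle=\sum_{i,j}a_ib_j\langle v_i,v_j\rangle,
\]
which belongs to $\K$ precisely because each $\langle v_i,v_j\rangle$ was thrown into the generating set of $\K$ from the start.

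Finally, I would observe that $(V,\K)$ contains all the generators of $A$, while conversely any substructure containing those generators must contain $\K$ (it has to contain $\Q$, the $r_i$, and each $\langle v_i,v_j\rangle$, and be closed under field operations) and must contain $V$ (being closed under addition and under $\K$-scalar multiplication). By minimality of the generated substructure, $A=(V,\K)$, so the first sort of $A$ is a finite-dimensional $\K$-vector space, as required. There is no real obstacle here; the only subtle point is choosing $\K$ large enough on the first pass so that the inner-product closure does not force one to enlarge it further, which is exactly why the values $\langle v_i,v_j\rangle$ are placed into its generating set at the outset.
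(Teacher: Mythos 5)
Your approach is a bottom-up construction of $\K$ and $V$, which is genuinely different from the paper's: there, $\K$ is simply \emph{defined} to be the set of second-sort elements of $A$, which is automatically a countable field containing $\Q$ (because $A$ is finitely generated, hence countable, and the second sort is closed under the field operations and contains the rational constants), and the first sort of $A$ is then a $\K$-vector space spanned by the finitely many first-sort generators, hence of finite dimension. The paper never has to identify which reals lie in the second sort.

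There is a gap in your version. A metric structure also carries the distance function symbol $d\colon\ell_2\times\ell_2\to\R$ (the paper's description of terms explicitly mentions ``the symbol for the distance function''), so a substructure must be closed under $d$ as well as under the inner product. You verify closure under the inner product but not under $d$, and the check would in fact fail: $d(v,w)=\sqrt{\langle v-w,v-w\rangle}$, and your finitely generated $\K$ need not contain these square roots. Concretely, with $v_1=e_1$, $v_2=e_1+e_2$ and no second-sort generators, your $\K$ equals $\Q$, yet $d(0,v_2)=\sqrt{2}\notin\Q$, so $(V,\K)$ is not closed under the language and is a proper subset of the generated substructure. Repairing this requires an iterative closure: add the new distances to $\K$, enlarge $V$ to the new $\K$-span, and repeat. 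The union of these stages is still a countable field and $V$ still has dimension at most the number of first-sort generators, but $\K$ is then typically not a finitely generated field extension of $\Q$, so the specific countability argument you gave (finitely generated, hence countable) no longer applies as written, and the one-shot closure verification becomes an inductive claim. The paper's top-down definition of $\K$ makes all of this unnecessary.
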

\begin{proof}
  Let $\K$ consist of the elements of $A$ which are of the
  second sort. Since the language contains constants for the
  rationals, we have $\Q\subseteq\K$ and since the language
  contains the language of fields, $\K$ is a field. Clearly
  then $A$ is a $\K$-vector space and the dimension is
  bounded by the number of generators of $A$.
\end{proof}

\begin{lemma}\label{hilbert:loc.fin.aut}
  The Hilbert space $\ell_2$ has locally finite
  automorphisms.
\end{lemma}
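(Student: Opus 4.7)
The plan is to use Witt's extension theorem for symmetric bilinear forms together with the structure of finitely generated substructures just observed.

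By the preceding claim, any finitely generated substructure $N$ of $\ell_2$ has a countable scalar field $\K\supseteq\Q$ (its second sort) and a finite-dimensional $\K$-vector space $A$ (its first sort), equipped with the restriction of the $\ell_2$-inner product. Since the $\ell_2$-inner product is positive definite, its restriction to $A$ is a non-degenerate $\K$-valued symmetric bilinear form. Because automorphisms of a metric structure act only on the first sort (and hence fix the whole second sort), a partial automorphism of $N$ is, in the first sort, just a $\K$-linear bijection $\varphi_i\colon V_i\to W_i$ between two $\K$-subspaces $V_i,W_i\subseteq A$ preserving the inner product.

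The key tool is Witt's extension theorem: over any field of characteristic $\neq 2$, any isometry between subspaces of a finite-dimensional non-degenerate quadratic space extends to an isometry of the whole space. Since $\K\subseteq\R$ has characteristic $0$ and $(A,\langle\cdot,\cdot\rangle\restriction A)$ is a finite-dimensional non-degenerate $\K$-quadratic space, Witt's theorem produces, for each $\varphi_i\colon V_i\to W_i$, a $\K$-linear orthogonal transformation $\tilde\varphi_i\colon A\to A$ extending $\varphi_i$. Such a $\tilde\varphi_i$ preserves $0$, addition, scalar multiplication by $\K$, and the inner product, and hence is an automorphism of $N$ as a metric structure. So one may take $N'=N$.

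I expect the only real obstacle to be a definitional one: one must verify that a ``partial automorphism of $N$'' is indeed $\K$-linear (i.e.\ fixes the scalar subfield of $N$), rather than being an abstract structure isomorphism of sub-substructures that could induce a nontrivial partial field automorphism on $\K$. Under the natural reading (automorphisms act only on the first sort, so partial automorphisms fix the relevant scalars) this is immediate and Witt suffices. Should a more liberal reading be intended, one would first enlarge $\K$ to a finitely generated subfield $\K'\subseteq\R$ containing enough algebraic conjugates to admit field automorphisms extending all the given partial field maps, replace $A$ by its $\K'$-linear span in $\ell_2$ (still finite-dimensional and non-degenerate), and then apply Witt's theorem over $\K'$; all such enlargements keep the ambient substructure finitely generated.
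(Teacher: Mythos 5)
Your proof is correct and takes a cleaner, more ``black-box'' route than the paper's, although the underlying mathematics is the same. Where you invoke Witt's extension theorem directly, the paper extends the given isometry $\varphi\colon A_1\to A_2$ first to the $\K$-spans $A_1',A_2'$ (using that $\varphi$, being inner-product-preserving and $\Q$-linear, is the restriction of an $\R$-linear isometry), then runs Gram--Schmidt over the field $\K$ to produce orthogonal bases of the orthogonal complements $(A_1')^\perp$ and $(A_2')^\perp$ inside $A$, and pairs these bases to extend $\varphi'$ to an automorphism of $A$. Implicitly this needs Witt's \emph{cancellation} theorem to guarantee that the two complements are isometric as $\K$-quadratic spaces (so that the paired orthogonal bases can be chosen with matching norms); Gram--Schmidt alone does not ensure this. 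Your appeal to Witt's \emph{extension} theorem sidesteps that step entirely and is arguably the more honest formulation of the same idea. Both approaches correctly conclude that one may take $N'=N$.

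One small point worth tightening in your write-up: it is not quite accurate to say that a partial automorphism $\varphi_i\colon V_i\to W_i$ is automatically a $\K$-linear bijection between $\K$-subspaces of $A$. The substructures $V_i,W_i$ carry only subfields $\K_i\subseteq\K$ of scalars, so a priori $\varphi_i$ is only $\K_i$-linear and $V_i,W_i$ are $\K_i$-subspaces. The correct move (which the paper makes explicit) is to note that, preserving $0$ and inner products, $\varphi_i$ is the restriction of an $\R$-linear isometry, hence extends uniquely to a $\K$-linear isometry between the $\K$-spans of $V_i$ and $W_i$ inside $A$; then Witt applies to those spans. This replaces the speculative ``enlarge $\K$'' discussion in your last paragraph, which is not needed: a partial automorphism that moved the second sort could never extend to an automorphism of any substructure (since automorphisms of a metric structure act only on the first sort), so only second-sort-fixing partial maps are relevant here.
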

\begin{proof}
  In fact, $\ell_2$ has the following stronger property. For
  any finitely generated substructure $A\subseteq\ell_2$,
  any isomoprhism between finitely generated substructures
  of $A$ extends to an automorphism of $A$. To see this, let
  $A_1,A_2\subseteq A$ be finitely generated substructures
  and $\varphi:A_1\to A_2$ be an isomorphism. Let $\K$ and
  $\K_1,\K_2\subseteq \K$ be such that $A$ is a $\K$-vector
  space, $A_1$ is $\K_1$-vector space and $A_2$ is a
  $\K_2$-vector space. Write $A_1'$ for the $\K$-vector
  space generated by $A_1$ and $A_2'$ for the $\K$-vector
  space generated by $A_2$. Note that since $\varphi$
  preserves the inner product, it is an isometry and since
  both $\K_1$ and $\K_2$ contain $\Q$, the map $\varphi$ can
  be extended to an isomorphism $\varphi':A_1'\to
  A_2'$. Now, since $\K$ is a field, the usual Gram--Schmidt
  orthogonalization process gives orthogonal bases
  $\{b^1_1,\ldots,b^1_k\}$ and $\{b^2_1,\ldots,b^2_k\}$ for
  the orthogonal complements of $A_1'$ in $A$ and $A_2$ in
  $A$ (respectively). The map which extends $\varphi$ and
  maps $b^1_i$ to $b^2_i$ extends to an automorphism of $A$.
\end{proof}

Note that the above proof also shows that given a finitely
generated substructure $A$ of the Hilbert space and its
finitely generated substructure $C\subseteq A$, we can form
the orthogonal complement $A\ominus C$ inside $A$ using the
standard Gram--Schmidt process. The extension property for the
Hilbert space is then straightforward and based on the
following claim.

\begin{claim}\label{indep.hilbert}
  Given finitely generated substructures $A,B,C\subseteq \ell_2$
  with $C\subseteq A\cap B$, if $A\ominus C\perp B\ominus
  C$, then $A\ind_C B$.
\end{claim}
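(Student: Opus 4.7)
The plan is to construct the desired extension of $\varphi\cup\psi$ by orthogonally decomposing the joint substructure and extending the two maps linearly along the decomposition.

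Let $\K$ denote the scalar field of $\langle A,B\rangle$. I would first observe that $\K$ is simply the field generated by the scalar fields $\K_A,\K_B$ of $A$ and $B$, since no new scalars arise from cross inner products: for $v\in A\ominus C$ and $w\in B\ominus C$ the inner product $\langle v,w\rangle$ vanishes by hypothesis, while for $c,c'\in C$ one has $\langle c,c'\rangle\in \K_C\subseteq\K_A\cap\K_B$. Together with the defining orthogonalities $C\perp (A\ominus C)$ and $C\perp (B\ominus C)$ and the hypothesis $A\ominus C\perp B\ominus C$, this yields that $\langle A,B\rangle$ decomposes as the internal orthogonal direct sum of the $\K$-spans of $C$, $A\ominus C$ and $B\ominus C$; directness is forced by mutual orthogonality, since the inner product is positive definite.

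Now suppose $\varphi:A\to A$ and $\psi:B\to B$ are automorphisms fixing $C$ setwise and with $\varphi\restriction C=\psi\restriction C$. Because automorphisms preserve the inner product, $\varphi$ sends $A\ominus C$ bijectively onto itself, and similarly $\psi$ on $B\ominus C$. I would define $\Phi$ on $\langle A,B\rangle$ to be the $\K$-linear map that acts as the common restriction $\varphi\restriction C=\psi\restriction C$ on the $\K$-span of $C$, as $\varphi\restriction (A\ominus C)$ on the $\K$-span of $A\ominus C$, and as $\psi\restriction (B\ominus C)$ on the $\K$-span of $B\ominus C$. By the direct sum decomposition, $\Phi$ is a well-defined $\K$-linear orthogonal bijection, hence an automorphism of the substructure $\langle A,B\rangle$. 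Finally, writing any $v\in A$ as $v=c+a'$ with $c\in C$, $a'\in A\ominus C$, one checks $\Phi(v)=\varphi(c)+\varphi(a')=\varphi(v)$, and similarly $\Phi\restriction B=\psi$.

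The only real subtlety is verifying that the $\K$-linear extensions are unambiguous: an orthogonal basis of $C$ over $\K_C$ (constructed via Gram–Schmidt, as in Lemma \ref{hilbert:loc.fin.aut}) remains $\K$-linearly independent inside $\langle A,B\rangle$ because its Gram matrix is diagonal with nonzero entries, and the same remark applies to orthogonal bases of $A\ominus C$ and $B\ominus C$ over their respective scalar fields. Once this is noted, the compatibility of the $\K$-linear extensions of $\varphi\restriction C$ and $\psi\restriction C$ on the $\K$-span of $C$ follows immediately from $\varphi\restriction C=\psi\restriction C$, and the rest of the argument is routine bookkeeping.
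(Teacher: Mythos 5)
Your proof is correct and uses the same circle of ideas the paper points to: the paper simply says the argument is "elementary linear algebra analogous to Lemma \ref{hilbert:loc.fin.aut}," and you are filling in exactly those details, namely Gram--Schmidt orthogonal bases, the observation that diagonal Gram matrices give linear independence over any larger scalar field, and the orthogonal direct-sum decomposition of $\langle A,B\rangle$ into the $\K$-spans of $C$, $A\ominus C$, $B\ominus C$. Your key observation that $A\ominus C\perp B\ominus C$ forces the cross inner products to vanish (so no new scalars appear beyond the field generated by $\K_A$ and $\K_B$) is the content behind the paper's one-line proof, so this is essentially the intended argument spelled out.
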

\begin{proof}
  This is elementary linear algebra and the proof is
  analogous to that of Lemma \ref{hilbert:loc.fin.aut}.
\end{proof}

\begin{corollary}
  The Hilbert space $\ell_2$ has the extension property.
\end{corollary}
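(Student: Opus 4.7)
The plan is to reduce the extension property for $\ell_2$ to Claim \ref{indep.hilbert}. Given finitely generated substructures $A \subseteq B \cap C$ of $\ell_2$, it suffices to construct $C' \equiv_A C$ such that $C' \ominus A$ is orthogonal to $B \ominus A$; the claim then gives $B \ind_A C'$.

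First, as observed after the proof of Lemma \ref{hilbert:loc.fin.aut}, the Gram--Schmidt process carried out over the scalar field $\K_C$ of $C$ (which is a field containing $\Q$) yields an orthogonal decomposition $C = A \oplus V$, where $V = C \ominus A$ is finite-dimensional over $\K_C$ with some basis $v_1, \ldots, v_k$. The finitely generated substructure $C$ is then determined, up to isomorphism over $A$, by the pair $(\K_C, G)$ where $G = (\langle v_i, v_j\rangle)_{i,j\leq k}$ is the Gram matrix of this basis.

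Second, I use that $B$ is finite-dimensional (over its scalar field, hence certainly as a real subspace of $\ell_2$) and that $\ell_2$ is infinite-dimensional, so the orthogonal complement $B^\perp$ of $B$ in $\ell_2$ is infinite-dimensional. Inside $B^\perp$ I can find vectors $v_1', \ldots, v_k'$ with exactly the Gram matrix $G$: pick an orthonormal family of sufficient size in $B^\perp$ and take the same real linear combinations that would realize the Gram matrix $G$ in $\R^N$. Define $C'$ to be the substructure generated by $A$ and $v_1', \ldots, v_k'$ together with the scalar field $\K_C$; concretely, $C' = A + \K_C v_1' + \ldots + \K_C v_k'$, with the second sort being $\K_C$. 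The assignment which is the identity on $A$ and on $\K_C$, and sends $v_i \mapsto v_i'$, extends to an isomorphism $C \to C'$ because it preserves the field structure, vector-space operations, and all inner products (both on $A$ and between $A$ and the $v_i$, since each $v_i'$ is orthogonal to $A \subseteq B$). Hence $C' \equiv_A C$.

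Third, by construction each $v_i'$ lies in $B^\perp$, and since $A \subseteq B$ we have $v_i' \perp A$ as well. It follows that $C' \ominus A$ is the $\K_C$-span of $\{v_1',\ldots,v_k'\}$, which is contained in $B^\perp$ and therefore orthogonal to $B \ominus A \subseteq B$. Claim \ref{indep.hilbert} then yields $B \ind_A C'$, completing the verification of the extension property. The only nontrivial point is the bookkeeping with the scalar field $\K_C$, but since $\K_C \supseteq \Q$ is a field, Gram--Schmidt and the construction of the isometric copy work over $\K_C$ just as they do over $\R$.
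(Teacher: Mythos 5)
Your proof is correct and takes essentially the same approach as the paper's one-line proof: decompose one side over the base, re-embed the complement orthogonally to the other side, and invoke Claim \ref{indep.hilbert}. Your version is more careful about the bookkeeping — the paper speaks loosely of ``finite-dimensional subspaces'' where the finitely generated substructures are really finite-dimensional $\K_C$-vector spaces — but the idea is the same.
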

\begin{proof}
  Given finite-dimensional subspaces $A,B,C\subseteq \ell_2$
  with $C\subseteq A\cap B$ find a copy $D\subseteq\ell_2$
  of $B\ominus C$ which is orthogonal to $A$. Then $C\oplus
  D$ witnesses the extension property by Claim
  \ref{indep.hilbert}.
\end{proof}

Before we show that the Hilbert space admits isolated
sequences, we need a couple of lemmas. Below, given a closed
subspace $V\subseteq \ell_2$ and a vector $v\in\ell_2$ write
$\pi_V(v)$ for the projection of $v$ onto $V$. Also,
$\ball_{\ell_2}(v,\varepsilon)$ stands for the open ball
$\{w\in\ell_2: ||w-v||<\varepsilon\}$ and
$\sphere_{\ell_2}(v,\varepsilon)$ stands for the sphere
$\{w\in\ell_2: ||w-v||=\varepsilon\}$. Recall also that
$\bar v=(v_1,\ldots,v_k)\in\ell_2$ is an \textit{orthonormal
  tuple} if $||v_i||=1$ and $v_i\perp v_j$ for $i\not=j$

\begin{lemma}\label{epsilon}
  Suppose $\bar v=(v_1,\ldots,v_k)$ is an orthonormal tuple
  in $\ell_2$ and let $H\subseteq\ell_2$ be an
  infinite-dimensional closed subspace. Suppose
  $V_1,\ldots,V_k\subseteq\ell_2$ are closed
  infinite-dimensional subspaces such that $v_i\in V_i$ and
  $V_i\perp V_j$ for $i\not= j$. Write $H_i=V_i\cap H$ and
  suppose $H_i$ is infinite-dimensional and that
  $\pi_{H_i}(v_i)\not=0$ for each $i\leq k$. Then there
  exists $\varepsilon>0$ such that for every $\bar
  v'=(v_1',\ldots,v_k')$ such that $$\bar v'\equiv\bar
  v,\quad v_i'\in V_i\quad \mbox{and}\quad
  ||v_i'-v_i||<\varepsilon$$ for each $i\leq k$, there
  exists $\bar v''=(v_1'',\ldots,v_k'')$ such that $$\bar
  v''\equiv_{\bar v}\bar v'\quad\mbox{and}\quad v_i''-v_i\in
  H_i$$ for each $i\leq k$.
\end{lemma}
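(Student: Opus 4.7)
The plan is to reduce the statement to $k$ independent one-coordinate problems, one inside each $V_i$. First observe that by the pairwise orthogonality of the subspaces $V_1,\ldots,V_k$, together with $v_j\in V_j$ and $v_i'\in V_i$, we automatically have $\langle v_i',v_j\rangle = 0$ and $\langle v_i',v_j'\rangle = 0$ for $i\neq j$; and $\bar v'\equiv\bar v$ forces each $v_i'$ to be a unit vector. Hence the quantifier-free type $\mtp(\bar v'/\bar v)$ is determined entirely by the diagonal scalars $\alpha_i:=\langle v_i',v_i\rangle$, $i\leq k$. Consequently, any candidate $\bar v''$ with $v_i''\in V_i$, $||v_i''||=1$ and $\langle v_i'',v_i\rangle = \alpha_i$ will satisfy $\mtp(\bar v''/\bar v) = \mtp(\bar v'/\bar v)$; this uses the homogeneity of $\ell_2$ established in Lemma \ref{hilbert:loc.fin.aut} to turn Gram-matrix equality into the action of a single automorphism fixing $\bar v$.

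Next, I parametrize $v_i'' = v_i + h_i$ with $h_i\in H_i$. The two conditions $||v_i''||=1$ and $\langle v_i'',v_i\rangle = \alpha_i$ expand (using $||v_i||=1$) to the system
\begin{align*}
\langle h_i,v_i\rangle &= \alpha_i - 1,\\
||h_i||^2 &= 2(1-\alpha_i).
\end{align*}
Since $h_i\in H_i$, the first inner product equals $\langle h_i,p_i\rangle$ where $p_i:=\pi_{H_i}(v_i)\neq 0$. On the sphere of radius $\sqrt{2(1-\alpha_i)}$ inside $H_i$, which is at least two-dimensional by the infinite-dimensionality of $H_i$, the functional $h\mapsto\langle h,p_i\rangle$ sweeps the entire closed interval $[-||p_i||\sqrt{2(1-\alpha_i)},\,||p_i||\sqrt{2(1-\alpha_i)}]$. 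The system therefore admits a solution precisely when $1-\alpha_i\leq 2||p_i||^2$.

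Finally, $||v_i||=||v_i'||=1$ gives $||v_i-v_i'||^2 = 2(1-\alpha_i)$, so the hypothesis $||v_i'-v_i||<\varepsilon$ yields $1-\alpha_i < \varepsilon^2/2$. Setting
\[
\varepsilon := \min_{i\leq k}\, 2\,||\pi_{H_i}(v_i)||,
\]
which is strictly positive by the assumption $\pi_{H_i}(v_i)\neq 0$ for every $i\leq k$, guarantees $1-\alpha_i < 2||p_i||^2$ for each $i$, and the required $h_i$ can be chosen (in the degenerate case $\alpha_i=1$ one simply takes $h_i=0$). The one point to verify carefully is the opening reduction, namely that equality of Gram matrices really implies $\mtp(\bar v''/\bar v) = \mtp(\bar v'/\bar v)$ in the full metric-structure language of $\ell_2$ (including the scalar sort, scalar multiplication and the inner product); this, however, is immediate from the fact that the linear map fixing $\bar v$ and sending $\bar v''$ to $\bar v'$ is a partial isomorphism of finitely generated substructures which, by the homogeneity established in the proof of Lemma \ref{hilbert:loc.fin.aut}, extends to a global automorphism of $\ell_2$.
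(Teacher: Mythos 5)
Your proof is correct and follows essentially the same path as the paper's: you reduce to $k$ independent coordinate problems via the pairwise orthogonality of the $V_i$, and then solve each one-coordinate problem by an intermediate-value-theorem argument on a connected sphere inside $H_i$. The paper phrases the IVT step geometrically, looking at $f(s)=\lVert s-v\rVert$ on $\mathrm{S}_{\ell_2}(0,1)\cap(H+v)$, while you phrase it algebraically, parametrizing by $\alpha_i=\langle v_i',v_i\rangle$ and studying the linear functional $h\mapsto\langle h,\pi_{H_i}(v_i)\rangle$ on the sphere of radius $\sqrt{2(1-\alpha_i)}$ in $H_i$; the two framings are equivalent, and your computation also pins down the sharp threshold $\varepsilon=\min_i 2\lVert\pi_{H_i}(v_i)\rVert$, which the paper gets as an existential bound $\lVert w-v\rVert$. (Minor remark: the homogeneity of $\ell_2$ you invoke in the Gram-matrix reduction is a standing fact in the framework rather than literally the content of Lemma \ref{hilbert:loc.fin.aut}, but the fact itself is correct and the reduction is sound.)
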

\begin{proof}
  Note that since the subspaces $V_i$ are mutually
  orthogonal, it is enough to prove the lemma for
  $k=1$. Assume then $V_1=\ell_2$ and write $v=v_1$ so that
  $\pi_H(v)\not=0$ (i.e. $v\not\perp H$). We need to show
  that there exists $\varepsilon>0$ such that for every
  $v'\in\sphere_{\ell_2}(0,1)$ with $||v'-v||<\varepsilon$
  there exists $v''\in\sphere_{\ell_2}(0,1)$ with $v''-v\in
  H$ and $v''\equiv_v v'$. The latter is equivalent to
  $||v''-v||=||v'-v||$ (since $v',v''$ have the same
  norm). Since $v\not\perp H$, there exists
  $w\in\sphere_{\ell_2}(0,1)$ such that $w\not=v$ and
  $w-v\in H$. Let $\varepsilon=||w-v||$. Write
  $S=\sphere_{\ell_2}(0,1)\cap(H+v)$ and note that
  $S=\sphere_{\ell_2}(0,1)\cap A$ for some
  infinite-dimensional closed affine subspace $A$ of
  $\ell_2$. Hence, $S$ is homeomorphic to the sphere
  $\sphere_{\ell_2}(0,1)$ and thus is connected. By the
  intermediate-value theorem, the function $f:S\to\R$ given
  by $f(s)=||s-v||$ assumes all values between $0$ and
  $\varepsilon$ on $S$, and so for every
  $v'\in\sphere_{\ell_2}(0,1)$ with $||v'-v||<\varepsilon$
  there exists $v''\in S$ with $||v''-v||=||v'-v||$. This
  ends the proof.
\end{proof}

\begin{lemma}\label{generates}
  Suppose $\bar v=(v_1,\ldots,v_k)$ is an orthonormal tuple
  in $\ell_2$ and $V_1,\ldots,V_k\subseteq\ell_2$ are closed
  infinite-dimensional subspaces such that $v_i\in V_i$ and
  $V_i\perp V_j$ for $i\not= j$. Write $$T=\{\bar
  w=(w_1,\ldots,w_k): \bar w \equiv\bar v\ \wedge\ \forall
  i\leq k\ w_i\in V_i\}.$$ Then $T$
  $(2,\varepsilon)$-generates an open set, for every
  $\varepsilon>0$.
\end{lemma}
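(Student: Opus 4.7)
The plan is to exhibit, for any orthonormal $k$-tuple $\bar b$ in a sufficiently small ball $U$ around $\bar v$ inside $p(\ell_2)$, a pair $\bar c,\bar d\in T$ within $\varepsilon$ of $\bar v$ such that $\mtp(\bar v\frown\bar c)=\mtp(\bar d\frown\bar b)$. Given this, homogeneity of $\ell_2$ supplies an automorphism $g_1$ with $g_1(\bar v)=\bar c$, and the equality of Gram matrices lets us extend a linear isometry $\lspan(\bar v,\bar c)\to\lspan(\bar d,\bar b)$ sending $v_i\mapsto d_i$ and $c_i\mapsto b_i$ to an orthogonal transformation $g_2$ of $\ell_2$ (using that the orthogonal complement is still infinite-dimensional). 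Then $g_2g_1(\bar v)=g_2(\bar c)=\bar b$, while $g_1(\bar v)=\bar c\in T$ and $g_2(\bar v)=\bar d\in T$ are both close to $\bar v$, which is exactly the definition of $T$ $(2,\varepsilon)$-generating an open set over $\bar v$.

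The key observation is that, because $V_i\perp V_j$ for $i\ne j$, any $\bar c\in T$ satisfies $\langle v_i,c_j\rangle=0$ for $i\ne j$, so the Gram matrix of $(\bar v,\bar c)$ is determined by the diagonal $\gamma_i=\langle v_i,c_i\rangle\in[0,1]$. Matching Gram matrices therefore reduces to choosing parameters $\gamma_1,\ldots,\gamma_k$ and producing (a) $\bar c\in T$ with $\langle v_i,c_i\rangle=\gamma_i$ and (b) $\bar d\in T$ with $\langle d_i,b_j\rangle=\delta_{ij}\gamma_i$.

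For (b), fix $\bar b$ in a small neighborhood of $\bar v$ and let $W_i\subseteq V_i$ be the orthogonal complement, inside $V_i$, of the (at most $(k-1)$-dimensional) subspace $\lspan\{\pi_{V_i}(b_j):j\ne i\}$; set $w_i=\pi_{W_i}(\pi_{V_i}(b_i))$, $\gamma_i=\|w_i\|$, and $d_i=w_i/\|w_i\|$. Then $d_i\in W_i\subseteq V_i$ is a unit vector with $\langle d_i,b_j\rangle=\langle d_i,\pi_{V_i}(b_j)\rangle=0$ for $j\ne i$ (by definition of $W_i$) and $\langle d_i,b_i\rangle=\langle d_i,\pi_{V_i}(b_i)\rangle=\langle d_i,w_i\rangle=\|w_i\|=\gamma_i$; this gives $\bar d\in T$ with the required Gram relations. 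For (a), fix once and for all unit vectors $e_i\in V_i$ with $e_i\perp v_i$ (available since $V_i$ is infinite-dimensional) and set $c_i=\gamma_i v_i+\sqrt{1-\gamma_i^2}\,e_i$; a direct computation gives $\bar c\in T$ with $\langle v_i,c_i\rangle=\gamma_i$ and $\|c_i-v_i\|=\sqrt{2(1-\gamma_i)}$.

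The only delicate point — and the main obstacle — is verifying uniform closeness: one must choose the radius of $U$ small enough (depending on $\varepsilon$) so that both $\|c_i-v_i\|<\varepsilon$ and $\|d_i-v_i\|<\varepsilon$ hold throughout $U$. This is a routine continuity argument using the estimates $\|\pi_{V_i}(b_j)\|\le\|b_j-v_j\|$ for $j\ne i$ and $\|\pi_{V_i}(b_i)-v_i\|\le\|b_i-v_i\|$: as $\bar b\to\bar v$ one has $w_i\to v_i$ and $\gamma_i\to 1$, so $d_i\to v_i$ and $c_i\to v_i$. Taking $U=\{\bar b\in p(\ell_2):d_{\ell_2}(\bar b,\bar v)<\delta\}$ for $\delta=\delta(\varepsilon)$ sufficiently small then witnesses that $T$ $(2,\varepsilon)$-generates an open set over $\bar v$.
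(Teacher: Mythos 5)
Your overall strategy---produce $\bar c,\bar d\in T$ with matching Gram data $(\bar v,\bar c)\equiv(\bar d,\bar b)$ and factor through a two-step orthogonal move---is the same as the paper's. The gap is in the final paragraph, where the ``routine continuity argument'' is asserted: the claim that $w_i\to v_i$ (hence $\gamma_i\to 1$ and $d_i,c_i\to v_i$) as $\bar b\to\bar v$ is false. Take $k=2$ with $V_1=\lspan\{e_1,e_3,e_5,\ldots\}$, $V_2=\lspan\{e_2,e_4,e_6,\ldots\}$, $v_1=e_1$, $v_2=e_2$, and for small $\eta>0$ set $b_1=\sqrt{1-\eta^2}\,e_1-\eta\,e_2$, $b_2=\sqrt{1-\eta^2}\,e_2+\eta\,e_1$. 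This is an orthonormal pair arbitrarily close to $\bar v$, yet $\pi_{V_1}(b_2)=\eta\,e_1$, so $W_1=V_1\cap\lspan(e_1)^\perp$ is orthogonal to $v_1$, and $w_1=\pi_{W_1}(\pi_{V_1}(b_1))=\pi_{W_1}(\sqrt{1-\eta^2}\,e_1)=0$. Your $d_1$ is undefined there, and for nearby $\bar b$ it is nearly orthogonal to $v_1$; likewise $\gamma_1\approx 0$, so $c_1$ is far from $v_1$. The bound $\|\pi_{V_i}(b_j)\|\le\|b_j-v_j\|$ controls only the \emph{size} of the vectors you are quotienting out, not their \emph{direction}, and that direction can align with $v_i$ and annihilate the projection.

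The paper sidesteps exactly this by not centering the open set at $\bar v$. It first picks a nearby generic orthonormal tuple $\bar v'\equiv\bar v$ with $v_i'\perp v_j$ and $\pi_{V_j}(v_i')\ne 0$ for $i\ne j$, and lets $U$ be a small ball around $\bar v'$. For $\bar v''\in U$ the projections $\pi_{V_i}(v_j'')$ then stay bounded away from $0$ and remain nearly orthogonal to $v_i$, which is what makes the continuity argument go through. Note that the definition of $(m,\varepsilon)$-generating an open set over $\bar a$ only requires \emph{some} nonempty open $U\subseteq p(X)$, not a neighborhood of $\bar a$ itself; your proof implicitly assumes the latter, and that is where it breaks.
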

\begin{proof}
  Fix $\varepsilon>0$. Find $\bar v'=(v_1',\ldots,v_k')$ in $\ell_2$
  such that 
  \begin{itemize}
  \item $\bar v'\equiv\bar v$
  \item $v_i'\perp v_j$ for every $i\not=j$
  \item for every $i\leq k$ we have $\pi_{V_j}(v_i)\not=0$
    for every $j\not=i$.
  \end{itemize}
  For each $i,j\leq k$ write $v_{ij}'$ for $\pi_{V_j}(v_i')$
  and note that if $i\not=j$, then $v_{ij}'\perp v_j$.

  Find $\delta>0$ such that for every $i\leq k$ the
  following holds: for every sequence $(v''_j:j\not=i,j\leq
  k)$ of vectors in $V_i$ such that
  $||v''_j-v_{ji}'||<\delta$ there exists $\tilde v\in V_i$
  with $||\tilde v||=1$, $\tilde v\perp v''_j$ for every
  $j\not=i$ and $||\tilde
  v-v_i||<\varepsilon\slash2$. Assume without loss of
  generality that
  $\delta<\varepsilon\slash2$. Write $$U=\{\bar
  v''=(v_1'',\ldots,v_k''): \bar v''\equiv \bar v\ \wedge\
  d_{\ell_2}(\bar v'',\bar v')<\delta\}.$$
  \begin{claim}\label{hilbertclaim}
    For every $\bar v''\in U$ there are
    $\varphi_1,\varphi_2\in O(\ell_2)$ such that
    $$\varphi_2\varphi_1(\bar v)=\bar v''$$ and
    $\varphi_1(\bar v),\varphi_2(\bar v)\in T$, as well as
    $d_{\ell_2}(\varphi_1(\bar v),\bar v)<\varepsilon$ and
    $d_{\ell_2}(\varphi_2(\bar v),\bar v)<\varepsilon$.
  \end{claim}
  \begin{proof}
    Fix $\bar v''$ in $U$. Note that, by the choice of
    $\delta$, for each $i\leq k$ there exists $\tilde v_i\in
    V_i$ such that $||\tilde v_i||=1$, $||\tilde
    v_i-v_i||<\varepsilon\slash2$ and $\tilde v_i\perp
    \pi_{V_i}(v''_j)$ for every $j\not=i$. Now, for every
    $i\leq k$ find $w_i\in V_i$ such that
    \begin{equation}
      \label{eq:hilbertclaim}
      \mtp(w_i\slash v_i)=\mtp(v_i''\slash\tilde v_i).\tag{$\dagger$}
    \end{equation}
    Such vectors $w_i$ exist since each $V_i$ is isomorphic
    to $\ell_2$. Now, (\ref{eq:hilbertclaim}) implies
    that $$\mtp(w_i v_i)=\mtp(v_i'' \tilde v_i)$$ for each
    $i\leq k$ and hence the map $\psi_i$ such that
    $$\psi_i:w_i\mapsto v_i'',\quad\psi_i:v_i\mapsto\tilde
    v_i$$ is a partial automorphism of $\ell_2$ for each
    $i\leq k$. Now, since for $i\not= j$ both the domains
    and ranges of $\psi_i$ and $\psi_j$ are pairwise
    orthogonal, the map $\bigcup_{i\leq k}\psi_i$ is a
    partial automorphism of $\ell_2$. Extend $\bigcup_{i\leq
      k}\psi_i$ to $\varphi_2\in O(\ell_2)$.  Find also
    $\varphi_1\in O(\ell_2)$ such
    that $$\varphi_1:v_i\mapsto w_i$$ for each $i\leq k$.

    Note that since $||\tilde v_i- v_i||<\varepsilon\slash2$
    and $||v_i''-v_i||<\varepsilon\slash2$, we have
    $||v_i''- \tilde v_i||<\varepsilon$ and hence
    (\ref{eq:hilbertclaim}) implies that $||w_i-
    v_i||<\varepsilon$ for each $i\leq k$. Therefore,
    $d_{\ell_2}(\varphi_1(\bar v),\bar v)<\varepsilon$. Also
    $d_{\ell_2}(\varphi_2(\bar v),\bar v)<\varepsilon$, as
    well as $\varphi_1(\bar v)\in T$ and $\varphi_2(\bar
    v)\in T$. As we clearly have $\varphi_2\varphi_1(\bar
    v)=\bar v''$, this proves the claim.
  \end{proof}
  Claim \ref{hilbertclaim} clearly means that $T$
  $(2,\varepsilon)$-generates an open set, so this ends the
  proof.
\end{proof}

\begin{lemma}\label{weak.cor}
  Suppose $\bar v=(v_1,\ldots,v_k)$ is an orthonormal tuple
  in $\ell_2$ and let $H\subseteq\ell_2$ be an
  infinite-dimensional closed subspace such that the vectors
  $\pi_H(v_1),\ldots,\pi_H(v_k)$ are linearly
  independent. Write $$N=\{\bar w=(w_1,\ldots,w_k): \bar
  w\equiv\bar v\ \wedge\ \forall i\leq k\ w_i-v_i\in H\}.$$
  Then there exists $\varepsilon>0$ such that $N$ is
  $2$-relatively $\varepsilon$-saturated over $\bar v$.
\end{lemma}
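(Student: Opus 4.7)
The plan is to deduce Lemma~\ref{weak.cor} by applying Lemmas~\ref{generates} and~\ref{epsilon} to a judiciously chosen system of closed mutually orthogonal infinite-dimensional subspaces $V_1,\ldots,V_k\subseteq\ell_2$ with $v_i\in V_i$ and with $H_i:=V_i\cap H$ infinite-dimensional and $\pi_{H_i}(v_i)\neq 0$. Given such $V_i$, set $T=\{\bar w\equiv\bar v:w_i\in V_i\text{ for all }i\}$. Lemma~\ref{generates} then gives that $T$ $(2,\varepsilon)$-generates an open set over $\bar v$ for every $\varepsilon>0$, while Lemma~\ref{epsilon} supplies an $\varepsilon>0$ such that for every $\bar w\in T$ with $d_{\ell_2}(\bar w,\bar v)<\varepsilon$ there exists $\bar w''\equiv_{\bar v}\bar w$ with $w_i''-v_i\in H_i\subseteq H$; then $\bar w''\in N$ and $\mtp(\bar w''/\bar v)=\mtp(\bar w/\bar v)$, so $N$ is $T$-relatively $\varepsilon$-saturated over $\bar v$ and hence $2$-relatively $\varepsilon$-saturated.

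The main obstacle is the construction of the $V_i$. Writing $p_i=\pi_H(v_i)$ and $q_i=v_i-p_i\in H^\perp$, the constraint $V_i\perp v_j$ for $j\neq i$ forces $V_i\cap H\perp p_j$ (since $V_i\cap H\subseteq H$ is automatically orthogonal to $q_j$), and the identity $\pi_{V_i\cap H}(v_i)=\pi_{V_i\cap H}(p_i)$ demands that $V_i\cap H$ contain a vector not orthogonal to $p_i$. Linear independence of $p_1,\ldots,p_k$ is exactly what ensures that, for each $i$, the one-dimensional subspace $L_i:=F\cap\lspan(p_j:j\neq i)^\perp$ of $F:=\lspan(p_1,\ldots,p_k)$ is nonzero and spanned by some $\ell_i$ with $\langle\ell_i,p_i\rangle\neq 0$; these $\ell_i$ are, up to normalization, the dual basis of the $p_j$'s inside $F$, and they are in general not mutually orthogonal.

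To repair this, I add to each $\ell_i$ a correcting vector $u_i^\perp$ drawn from the infinite-dimensional subspace $H\ominus F$ and set $u_i:=\ell_i+u_i^\perp$, choosing the $u_i^\perp$ so that $\langle u_i^\perp,u_j^\perp\rangle=-\langle\ell_i,\ell_j\rangle$ for $i\neq j$; the diagonal entries $\|u_i^\perp\|^2$ are unconstrained, and taking them sufficiently large makes the prescribed $k\times k$ symmetric matrix diagonally dominant and hence positive definite, so vectors in $H\ominus F$ with this Gram matrix exist. Then the resulting $u_i\in H$ are mutually orthogonal and satisfy $\langle v_i,u_j\rangle=\langle p_i,\ell_j\rangle=0$ for $i\neq j$ while $\langle v_i,u_i\rangle=\langle p_i,\ell_i\rangle\neq 0$. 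Finally, split the infinite-dimensional complement $H\cap\bar v^\perp\cap\lspan(u_1,\ldots,u_k)^\perp$ into $k$ mutually orthogonal infinite-dimensional closed pieces $A_1,\ldots,A_k$ and put $V_i:=\lspan(v_i)+\lspan(u_i)+A_i$. A routine check then verifies that the $V_i$ are mutually orthogonal closed infinite-dimensional subspaces of $\ell_2$ containing the respective $v_i$, that each $H_i=V_i\cap H$ contains $\lspan(u_i)\oplus A_i$ and hence is infinite-dimensional, and that $\pi_{H_i}(v_i)\neq 0$ since $u_i\in H_i$ with $\langle v_i,u_i\rangle=\langle p_i,\ell_i\rangle\neq 0$.
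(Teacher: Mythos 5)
Your top-level architecture matches the paper's exactly: both proofs reduce the lemma to producing mutually orthogonal closed infinite-dimensional subspaces $V_1,\ldots,V_k$ with $v_i\in V_i$, $H_i=V_i\cap H$ infinite-dimensional, and $\pi_{H_i}(v_i)\neq 0$, then define $T$ and invoke Lemmas \ref{epsilon} and \ref{generates}. Where you diverge is the construction of the auxiliary vectors $u_i$ (the paper's $w_i'$ in Claim \ref{claimoberwolfach}): the paper builds them inductively, at stage $i$ observing that the two bad sets (vectors orthogonal to $w_i$, and vectors in the span of what has been chosen so far) are proper subspaces of an infinite-dimensional candidate space $W_i$ and so cannot cover it. You instead give a closed-form construction: take the dual basis $\ell_i$ of $\pi_H(v_1),\ldots,\pi_H(v_k)$ inside $F=\lspan(p_1,\ldots,p_k)$, which already handles the conditions $\langle u_i,p_j\rangle=0$ for $j\neq i$ and $\langle u_i,p_i\rangle\neq 0$, and then restore mutual orthogonality of the $u_i$ by adding correcting vectors from $H\ominus F$ whose Gram matrix is forced off the diagonal and made positive definite by taking the diagonal entries large. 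Both arguments are correct; yours is more explicit and algebraic, the paper's is shorter and relies on the standard fact that a Hilbert space is not a finite union of proper closed subspaces. The remaining details in your proposal (why $V_i\perp V_j$ forces $H_i\perp p_j$, why $H_i$ is infinite-dimensional, why $\pi_{H_i}(v_i)\neq 0$) all check out.
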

\begin{proof}
  Write $w_j=\pi_H(v_i)$ for each $i\leq k$.
  \begin{claim}\label{claimoberwolfach}
     There exist $w_1',\ldots,w_k'\in H$ such that
     $w_i'\perp w_j'$ and $w_i'\perp w_j$ for $i\not=j\leq
     k$ and $w_i'\not\perp w_i$ for every $i\leq k$.
  \end{claim}
  \begin{proof}
    Inductively on $i\leq k$ construct $w_i'\in H$ such that
    $w_i'\not\perp w_i$ and $w_i'\perp w_j$ for $j\not=i$ and $w_i'\perp
    w_j'$ for $j<i$ as well
    as $$w_1,w_1',\ldots,w_i,w_i',w_{i+1},\ldots,w_k$$ are
    linearly independent. Suppose $w_1',\ldots,w_{i-1}'$ have
    been
    constructed. Let $$W_i=\{w_1,w_1',\ldots,w_{i-1},w_{i-1}',w_{i+1},\ldots,w_k\}^\perp\cap H$$
    and note that since
    $w_i\notin\lspan(w_1,w_1',\ldots,w_{i-1},w_{i-1}',w_{i+1},\ldots,w_k)$,
    we have that $W_i'=W_i\cap\{w_i\}^\perp$ is a proper
    subspace of $W_i$. Also,
    $W_i''=W_i\cap\lspan\{w_1,w_1',\ldots,w_i,w_i',w_{i+1},\ldots,w_k\}$
    is a proper subspace of $W_i$ since $W_i$ is
    infinite-dimensional. Now, $W_i'\cup W_i''$ do not cover
    $W_i$, so find $w_i'\in W_i\setminus(W_i'\cup W_i'')$
    and note that it is as needed.
  \end{proof}

  Using Claim \ref{claimoberwolfach}, find closed
  infinite-dimensional subspaces $V_i$ for $i\leq k$ such
  that for each $i\not= j\leq k$ we have
  \begin{itemize}
  \item $v_i\in V_i$ and $V_i\perp V_j$,
  \item $H\cap V_i$ is infinite-dimensional,
  \item $\pi_{H\cap V_i}(v_i)\not=0$.
  \end{itemize}

  Find $\varepsilon>0$ as in Lemma \ref{epsilon} and
  let $$T=\{\bar w=(w_1,\ldots,w_k): \bar w \equiv\bar v\
  \wedge\ \forall i\leq k\ w_i\in V_i\}.$$ Then $N$ is
  $T$-relatively $\varepsilon$-saturated by Lemma
  \ref{epsilon} and $T$ $(2,\varepsilon)$-generates an open
  set, by Lemma \ref{generates}. This ends the proof.
\end{proof}

\begin{definition}
  Say that a sequence of $k$-tuples $\bar a_n$ in $\ell_2$
  is \textit{strongly linearly independent} if there is a
  sequence of infinite-dimensional closed subspaces
  $V_n\subseteq\ell_2$ such that
  \begin{itemize}
  \item $V_n\perp V_m$ for $n\not= m$,
  \item $\bar a_m\perp V_n$ for $n\not=m$,
  \item the projections of the elements of $\bar a_n$ to
    $V_n$ are linearly independent.
  \end{itemize}
\end{definition}

\begin{lemma}\label{strongly.linearly}
  If $p$ is a quantifier-free type of an orthonormal tuple
  in $\ell_2$, then any strongly linearly independent
  sequence in $p(\ell_2)$ is $2$-weakly isolated.
\end{lemma}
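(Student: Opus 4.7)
The plan is to apply Lemma \ref{weak.cor} coordinate-by-coordinate to each $\bar a_n$ to produce the candidate neighborhoods $N_n$, and then build a single global orthogonal automorphism by checking that the proposed correspondence $\bar a_n\mapsto \bar b_n$ is a partial isometry of $\ell_2$.

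For each $n$, let $H_n = V_n$. Since $\bar a_n$ is orthonormal and, by the definition of strong linear independence, the projections $\pi_{V_n}(a_{n,1}),\ldots,\pi_{V_n}(a_{n,k})$ are linearly independent, Lemma \ref{weak.cor} applies and produces $\varepsilon_n>0$ together with
\[
N_n = \{\bar w = (w_1,\ldots,w_k): \bar w\equiv\bar a_n\ \wedge\ \forall i\leq k\ w_i - a_{n,i}\in V_n\}
\]
which is $2$-relatively $\varepsilon_n$-saturated over $\bar a_n$. These are the sets I propose as witnesses for $2$-weak isolation.

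Next I verify the automorphism clause. Suppose $\bar b_n\in N_n$ with $\mtp(\bar b_n)=\mtp(\bar a_n)$ and $d_{\ell_2}(\bar a_n,\bar b_n)<\varepsilon_n$. Write $w_{n,i}=b_{n,i}-a_{n,i}$, so $w_{n,i}\in V_n$. I claim that the partial map $\phi:\bigcup_n\{a_{n,1},\ldots,a_{n,k}\}\to\ell_2$ defined by $\phi(a_{n,i})=b_{n,i}$ preserves inner products. For $n=m$ this is immediate since both $\bar a_n$ and $\bar b_n$ are orthonormal. For $n\neq m$, expand
\[
\langle b_{n,i},b_{m,j}\rangle = \langle a_{n,i},a_{m,j}\rangle + \langle a_{n,i},w_{m,j}\rangle + \langle w_{n,i},a_{m,j}\rangle + \langle w_{n,i},w_{m,j}\rangle,
\]
and observe that each of the last three summands vanishes: $a_{n,i}\perp V_m$ and $a_{m,j}\perp V_n$ by the second clause of strong linear independence, while $V_n\perp V_m$ by the first clause.

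Since $\phi$ preserves inner products, it extends linearly to a well-defined isometric linear embedding of $\overline{\lspan}\{a_{n,i}:n\in\N,i\leq k\}$ into $\ell_2$ (the kernel issue is automatic: $\|\sum c_{n,i}a_{n,i}\|^2=\|\sum c_{n,i}b_{n,i}\|^2$ forces the images to vanish whenever the preimages do). Since both the domain and the image are closed subspaces of the same separable Hilbert space, their orthogonal complements are isomorphic as Hilbert spaces, and the embedding extends to an orthogonal automorphism $\varphi$ of $\ell_2$ with $\varphi(\bar a_n)=\bar b_n$ for every $n$. This verifies that the $\bar a_n$ form a $2$-weakly isolated sequence. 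The step I expect to be most delicate is the verification of inner-product preservation across distinct indices, where the full force of all three clauses of strong linear independence is used; once that holds, extension to a global orthogonal automorphism is routine.
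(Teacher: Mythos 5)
Your proof is correct in outline and identical to the paper's through the application of Lemma \ref{weak.cor} (same $N_n$, same verification of its hypotheses). Where you diverge is the final automorphism-construction step, and there you have a genuine, though fillable, gap. The paper never builds a single global partial isometry $a_{n,i}\mapsto b_{n,i}$; instead it defines, for each $n$, an orthogonal transformation $\varphi_n\in O(V_n)$ sending $\pi_{V_n}(\bar a_n)$ to $\pi_{V_n}(\bar b_n)$ (these have the same type in $V_n$ because $b^n_i-a^n_i\in V_n$ forces $\pi_{V_n^\perp}(\bar a_n)=\pi_{V_n^\perp}(\bar b_n)$), and then takes the direct sum of the $\varphi_n$ over the pairwise orthogonal $V_n$'s together with the identity on $(\overline{\bigoplus_n V_n})^\perp$. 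That construction yields an element of $O(\ell_2)$ automatically, with no codimension bookkeeping required.

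Your route instead asserts that ``since both the domain and the image are closed subspaces of the same separable Hilbert space, their orthogonal complements are isomorphic as Hilbert spaces.'' That statement is false in general: the unilateral shift is a linear isometry of $\ell_2$ whose domain is all of $\ell_2$ but whose image is a hyperplane, so the two orthogonal complements have different dimensions. To extend your isometric embedding to an orthogonal automorphism you must check that the orthogonal complements of $D=\overline{\lspan}\{a_{n,i}\}$ and $E=\overline{\lspan}\{b_{n,i}\}$ have the same (Hilbert) dimension. In the present situation this is true and not hard: fixing any $n$, the infinite-dimensional space $V_n\ominus\pi_{V_n}(\lspan\bar a_n)$ is orthogonal to every $a_{m,j}$ (for $m=n$ because its elements are orthogonal to each $\pi_{V_n}(a_{n,i})$ and lie in $V_n$ while $\pi_{V_n^\perp}(a_{n,i})\perp V_n$; for $m\neq n$ because $a_{m,j}\perp V_n$), so $D^\perp$ is infinite-dimensional, and symmetrically so is $E^\perp$. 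With that observation supplied your extension step goes through, and the two proofs become equivalent. Your inner-product computation across distinct indices is correct and uses all three clauses of strong linear independence exactly as needed, and your handling of the well-definedness/kernel issue is also correct. So the only substantive missing piece is the justification that $\dim D^\perp=\dim E^\perp$; the paper's ``patch local $\varphi_n$ with the identity'' construction is chosen precisely because it makes that issue evaporate.
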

\begin{proof}
  Suppose $\bar a_n=(a^n_1,\ldots,a^n_k)$ is strongly
  linearly independent in $p$. Note that
  $a_1^n,\ldots,a^n_k$ form an orthonormal tuple. Let
  $$N_n=\{\bar v=(v_1,\ldots,v_k)\in p(\ell_2):\forall i\leq
  k\  v_i-a^n_i\in V_n\}.$$

  We claim that there are $\varepsilon_n>0$ such that the
  sequence of $N_n$ and $\varepsilon_n$ witnesses that $\bar
  a_n$ is $2$-weakly isolated. For each $n$ find
  $\varepsilon_n>0$ as in Lemma \ref{weak.cor} for $\bar
  v=\bar a_n$. Then $N_n$ is $2$-relatively
  $\varepsilon_n$-saturated over $\bar a_n$.

  Suppose now that $\bar b_n=(b^n_1,\ldots,b^n_k)\in N_n$
  are such that $\mtp(\bar b_n)=\mtp(\bar a_n)$ for each
  $n\in\N$ and $d_{\ell_2}(\bar b_n,\bar
  a_n)<\varepsilon_n$. Then $b^n_i-a^n_i\in V_n$ for each
  $i\leq k$. Find $\varphi_n\in O(V_n)$ such that
  $\varphi_n(\pi_{V_n}(\bar a_n))=\pi_{V_n}(\bar b_n)$ and
  let $\varphi\in O(\ell_2)$ be such that $\varphi$ extends
  all the $\varphi_n$ and is equal to the identity on the
  orthogonal complement of the union of $V_n$'s. Then
  $\varphi(\bar a_n)=\bar b_n$ for each $n\in\N$. This ends
  the proof.
\end{proof}

\begin{proposition}
  The Hilbert space $\ell_2$ admits $2$-weakly isolated
  sequences.
\end{proposition}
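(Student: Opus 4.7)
By Lemma~\ref{strongly.linearly} it suffices to produce, inside any nonmeager $Z\subseteq p(\ell_2)$, a strongly linearly independent sequence. My plan is to first reduce to the case where $p$ is the type of an orthonormal tuple, and then to construct such a sequence via an inductive Baire-category argument, followed by an explicit choice of witnessing subspaces $V_n$.

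For the reduction, fix a realization $\bar a\in p(\ell_2)$ and let $m=\dim\lspan(\bar a)$. Applying Gram--Schmidt to a maximal linearly independent subtuple yields an orthonormal $m$-tuple $\bar e$ and a matrix $C$ such that $\bar a=C\bar e$; since $C$ depends only on the Gram matrix of $\bar a$, it depends only on $p$. The resulting assignment $\Psi:p(\ell_2)\to q(\ell_2)$ with $q=\mtp(\bar e)$ is an $O(\ell_2)$-equivariant homeomorphism with continuous linear inverse $\bar e'\mapsto C\bar e'$. It preserves nonmeagerness and transports $2$-weakly isolated sequences between $p$ and $q$, so one may assume $p$ is the quantifier-free type of an orthonormal $m$-tuple.

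Now restrict $Z$ to a nonempty open subset $U\subseteq p(\ell_2)$ in which $Z$ is nonmeager in every relatively open piece, and fix $\bar a_\infty\in U$. Put $H:=\lspan(\bar a_\infty)^\perp$ and choose an orthogonal decomposition $H=\bigoplus_{n\geq 1}F_n$ with each $F_n$ infinite-dimensional. Inductively pick $\bar a_n\in Z$ with $\|\bar a_n-\bar a_\infty\|$ decreasing very rapidly so that, writing $\beta_n^i=a_n^i-\sum_j\langle a_n^i,a^\infty_j\rangle a^\infty_j$ for the $H$-projection of the $i$-th component, the $\bar a_n$'s satisfy two conditions at stage $n$: (i) the vectors $\pi_{F_n}(\beta_n^1),\ldots,\pi_{F_n}(\beta_n^m)$ are linearly independent in $F_n$; (ii) for each $l<n$ and each $i\leq m$, the vector $\pi_{F_l}(\beta_n^i)$ lies in the orthogonal complement inside $F_l$ of the finitely many previously chosen $\pi_{F_l}(\beta_k^j)$ ($k<n$, $j\leq m$). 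The first condition is realized by perturbations of the form $\bar a_\infty\mapsto(1+\varepsilon^2)^{-1/2}(\bar a_\infty+\varepsilon\bar f)$ with $\bar f$ an orthonormal $m$-tuple in $F_n$; the second is realized by restricting $\bar f$ further to the still-infinite-dimensional orthogonal complement (inside $F_n$) of the relevant finite set in $F_l$. Combined with nonmeagerness of $Z$ in every open neighborhood of $\bar a_\infty$, this lets one locate $\bar a_n\in Z$ meeting both requirements.

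With the sequence built, set
\[
V_n:=F_n\ominus\overline{\lspan\bigl(\pi_{F_n}(\beta_k^i):k\neq n,\,i\leq m\bigr)}.
\]
Condition (ii), together with rapid decay of $\|\beta_n^i\|$, forces the closed span on the right to remain a proper (indeed ``thin'') subspace of $F_n$, so each $V_n$ is infinite-dimensional. Pairwise orthogonality $V_n\perp V_m$ follows from $F_n\perp F_m$ for $n\neq m$; the condition $\bar a_l\perp V_n$ for $l\neq n$ follows from $V_n\subseteq F_n\perp\lspan(\bar a_\infty)$ together with $\pi_{F_n}(\beta_l^i)\perp V_n$ by construction; and $V_n\supseteq\lspan(\pi_{F_n}(\beta_n^i))$ by definition, so the projection of $\bar a_n$ onto $V_n$ has linearly independent components by (i). Hence $(\bar a_n,V_n)$ witnesses strong linear independence and Lemma~\ref{strongly.linearly} yields the conclusion. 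The main obstacle is controlling the closed span in the definition of $V_n$: countably many small vectors could in principle span all of $F_n$, and it is precisely condition (ii) in the inductive construction, reinforced by rapid decay of the $\beta_n^i$, that keeps this span confined and $V_n$ infinite-dimensional.
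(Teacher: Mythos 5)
Your overall strategy---reduce to an orthonormal tuple, build $\bar a_n\in Z$ near a fixed $\bar a_\infty$, and extract orthogonal infinite-dimensional subspaces $V_n$ to invoke Lemma~\ref{strongly.linearly}---is the same as the paper's, and the use of a fixed orthogonal decomposition $H=\bigoplus F_n$ of $\lspan(\bar a_\infty)^\perp$ is a reasonable variant. However, there are two genuine gaps.

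First, condition (ii) demands \emph{exact} orthogonality: $\pi_{F_l}(\beta_n^i)$ must lie in the orthogonal complement, inside $F_l$, of the finitely many previously chosen projections. That is a closed nowhere dense condition on $\bar a_n$ (a proper closed subspace condition). Nonmeagerness of $Z$ does not let you find points of $Z$ inside a prescribed closed nowhere dense set; it only guarantees $Z$ meets every open set on a nonmeager set, which is exactly why open conditions like (i) are fine and exact orthogonality is not. Your heuristic that $\bar a_n$ can be taken ``of the form'' $(1+\varepsilon^2)^{-1/2}(\bar a_\infty+\varepsilon\bar f)$ with $\bar f\in F_n$ would make $\pi_{F_l}(\beta_n^i)=0$ for $l<n$ and render (ii) vacuous, but such $\bar a_n$ need not lie in $Z$; once you perturb to land in $Z$, (ii) is lost.

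Second, even granting (ii), the conclusion that $V_n=F_n\ominus\overline{\lspan(\pi_{F_n}(\beta_k^i):k\neq n)}$ is infinite-dimensional does not follow from mutual orthogonality plus rapid norm decay. If $\{e_j\}$ is an orthonormal basis of $F_l$, the vectors $v_k=e_k/k$ are mutually orthogonal with $\|v_k\|\to0$ arbitrarily fast, yet their closed span is all of $F_l$. So ``condition (ii) reinforced by rapid decay'' is not a proof; one needs a quantitative control on how much the orthogonal complement shrinks at each step. This is precisely what the paper's proof supplies: it introduces the functions $\rho_n$ on the Grassmannian $\gr(\ell_2,\infty)$ so that a decreasing chain $W_m\supseteq W_m\cap\lspan(a_{m+1})^\perp\supseteq\cdots$ with $\rho_n$-small Grassmannian increments at each stage (condition (vi)) has an infinite-dimensional intersection, and all the inductive requirements there ((iii), (v), (vi)) are formulated as \emph{open} inequalities, compatible with choosing $a_n\in Z$. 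Replacing your exact orthogonality and norm-decay bookkeeping with such Grassmannian-metric control would repair the argument; as written, both the realizability of (ii) inside $Z$ and the infinite-dimensionality of $V_n$ are unjustified.
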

\begin{proof}
  Suppose $p$ is a quantifier-free $k$-type of a tuple $\bar
  a=(a_1,\ldots,a_k)$ in $\ell_2$. First note that we can
  assume that the elements of $\bar a$ form an orthonormal
  set. Otherwise, one can consider a tuple which is an
  orthonormal basis for the space spanned by $\bar a$ and
  work with a quantifier-free $m$-type $q$ for some
  $m\leq n$. Then, for every $m$-tuple $\bar b\in q(M)$ there is
  a unique tuple $\bar b'\in p(M)$ such that the linear
  spans of $\bar b$ and $\bar b'$ are the same and
  \begin{itemize}
  \item if $(\bar b_n:n\in\N)$ is isolated in $q$, then
    $(\bar b_n':n\in\N)$ is isolated in $p$,
  \item the map $\bar b\mapsto \bar b'$ is a homeomorphism
    of $q(M)$ and $p(M)$.
  \end{itemize}
  In fact, for simplicity of notation, assume that $k=1$
  (the argument for arbitrary $k$ is analogous).

  Suppose now that $Z\subseteq p(\ell_2)$ is nonmeager.
  Restricting to an open subset of $p(\ell_2)$ if
  neccessary, we can assume that $Z$ is nonmeager in every
  nonempty open subset of $\ell_2$.

  Write $\gr(\ell_2)$ for the space of all closed subspaces
  of $\ell_2$ and $\gr(\ell_2,\infty)$ for the space of
  infinite-dimensional closed subspaces of $\ell_2$. The
  topology on $\gr(\ell_2)$ is induced from the strong
  operator topology via the map $V\mapsto\pi_V$. Write
  $d_\gr$ for a compatible metric on $\gr(\ell_2)$. Note
  that there is a sequence of functions
  $\rho_n:\gr(\ell_2,\infty)\to(0,\infty)$ such that
  whenever $W_n\in\gr(\ell_2,\infty)$ is a decreasing
  sequence of infinite-dimensional closed subspaces of
  $\ell_2$ and $d_\gr(W_n,W_{n+1})<\rho_{n+1}(W_n)$, then
  $\bigcap_n W_n$ is also infinite-dimensional.

  By induction on $n\in\N$ find vectors $a_n\in Z$, positive
  reals $\delta_n$ and pairwise orthogonal
  infinite-dimensional closed subspaces $W_n\subseteq
  \ell_2$ such that:
  \begin{itemize}
  \item[(i)] $W_1\oplus\ldots\oplus W_n$ is co-infinite
    dimensional,
  \item[(ii)] $\pi_{W_n}(a_n)\not=0$.
  \item[(iii)]
    $||\pi_{W_n\cap\,\lspan(a_{n+1})^\perp}(a_n)||=\delta_n$,
  \item[(iv)] if $m<n$, then $a_m\perp W_n$
  \item[(v)] if $m<n$, then we
    have $$||\pi_{W_m\cap\,(\bigcup_{i=m+1}^n
      \lspan(a_i))^\perp}(a_m)||>\frac{1}{2}\delta_m,$$
  \item[(vi)] if $m<n$ and
    $\varepsilon=\rho_n(W_m\cap(\bigcup_{i=m+1}^{n-1}
    \lspan(a_i))^\perp)$,
    then $$d_\gr(W_m\cap(\bigcup_{i=m+1}^n
    \lspan(a_i))^\perp,W_m\cap(\bigcup_{i=m+1}^{n-1}
    \lspan(a_i))^\perp)<\varepsilon.$$
  \end{itemize}
  After this is done, put
  $V_n=W_n\cap(\bigcup_{m>n}\lspan(a_m))^\perp$. Note that
  $V_n$ are infinite-dimensional by (vi) and mutually
  orthogonal given that $W_n$ are mutually orthogonal. Also,
  (iv) and the definition of $V_n$ imply that if $n\not=m$,
  then $a_m\perp V_n$. The projection of $a_n$ onto $V_n$ is
  nonzero by the condition (v) and hence $\bar a_n$ is
  strongly linearly independent, as witnessed by $V_n$ and
  hence $2$-weakly isolated by Lemma
  \ref{strongly.linearly}.

  To perform the induction step, suppose $a_1,\ldots, a_n$
  and $W_1,\ldots,W_n$ as well as
  $\delta_1,\ldots,\delta_{n-1}$ are chosen. Using the fact
  that a proper subspace of $\ell_2$ is meager as well as
  the assumption that $Z$ is nonmeager in any nonempty open
  set, find $a_{n+1}\in Z$ which does not belong to
  $\lspan(\bigcup_{i=1}^{n} W_i\cup\{a_i\})$ and
  \begin{equation}
    \label{eq:projection}
    a_{n+1}\not\perp W_n\cap\lspan(\pi_{W_n}(a_n))^\perp\tag{$\dagger\dagger$}
  \end{equation}
  and $a_{n+1}$ is so close to $a_n$ that for $m<n$ we
  have $$||\pi_{W_m\cap\,(\bigcup_{i=m+1}^n
    \lspan(a_i))^\perp}(a_m)||>\frac{1}{2}\delta_m$$ and for
  every $m<n$, writing
  $\varepsilon_m^n=\rho_{n+1}(W_m\cap(\bigcup_{i=m+1}^n
  \lspan(a_i))^\perp)$ we
  have $$d_\gr(W_m\cap(\bigcup_{i=m+1}^{n+1}
  \lspan(a_i))^\perp,W_m\cap(\bigcup_{i=m+1}^n
  \lspan(a_i))^\perp)<\varepsilon_m^n.$$ This implies that
  (v) and (vi) are satisfied at the induction step.

  Note that the projection of $a_{n+1}$ to
  $(\lspan(\bigcup_{i=1}^n W_i\cup\{a_i\}))^\perp$ is
  nonzero. Find an infinite-dimensional closed space
  $W_{n+1}$ such that
  \begin{itemize}
  \item $W_{n+1}$ is orthogonal to $\lspan(\bigcup_{i=1}^n
    W_i\cup\{a_i\})$,
  \item the projection of $a_{n+1}$ onto $W_{n+1}$ is
    nonzero,
  \item $W_1\oplus\ldots\oplus W_{n+1}$ is co-infinite
    dimensional.
  \end{itemize}
  This gives (i), (ii) and (iv). Finally, we claim that
  $\pi_{W_n\cap\,\lspan(a_{n+1})^\perp}(a_n)$ is
  nonzero. Indeed, otherwise $$a_n\perp
  W_n\cap\lspan(a_{n+1})^\perp$$ and
  so $$\pi_{W_n}(a_n)\perp W_n\cap\lspan(a_{n+1})^\perp.$$
  But then, since $a_{n+1}\notin W_n^\perp$ (by
  (\ref{eq:projection})), we have
  that $$W_n\cap(\lspan(\pi_{W_n}(a_n)))^\perp=W_n\cap(\lspan(a_{n+1}))^\perp$$
  and so $a_{n+1}\perp
  W_n\cap(\lspan(\pi_{W_n}(a_n)))^\perp$, which contradicts
  (\ref{eq:projection}). Let then
  $\delta_n=||\pi_{W_n\cap\,\lspan(a_{n+1})^\perp}(a_n)||>0$. This
  ends the construction and the proof.
\end{proof}

Finally, we verify that the stronger property discussed in
Section \ref{sec:trivial} holds for the Hilbert space. Say
that a sequence of tuples $\bar a_n\in\ell_2$ is a
\textit{proper orthogonal sequence} if the subspaces spanned
by different $\bar a_n$ are pairwise ortogonal and the
orthogonal complement of their union is
infinite-dimensional.

\begin{claim}\label{orthogonal}
  Any proper orthogonal sequence in $\ell_2$ is independent.
\end{claim}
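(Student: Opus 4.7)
The plan is to build each $N_n$ as the set of tuples realizing $p = \mtp(\bar a_n)$ that lie inside a carefully chosen subspace $V_n$ which contains $\bar a_n$ together with an infinite-dimensional ``reservoir'' orthogonal to everything else in the sequence.

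Concretely, I would let $H$ denote the orthogonal complement of $\lspan(\bigcup_n \bar a_n)$ in $\ell_2$; by hypothesis $H$ is infinite-dimensional. Decompose it as $H = \bigl(\bigoplus_{n \in \N} H_n\bigr) \oplus H_\infty$, with each $H_n$ and $H_\infty$ an infinite-dimensional closed subspace (a standard splitting of a separable Hilbert space). Set $V_n = \lspan(\bar a_n) \oplus H_n$ and define
\[
  N_n = \{\bar b \in p(\ell_2) : b_i \in V_n \text{ for every } i \le k\},
\]
where $k$ is the arity of the tuples.

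To verify that $N_n$ is relatively saturated over $\bar a_n$, the plan is to take any $\bar b \in p(\ell_2)$ with $q = \mtp(\bar b/\bar a_n)$ and decompose each $b_i$ as $b_i^{\parallel} + b_i^{\perp}$ with $b_i^{\parallel} \in \lspan(\bar a_n)$; the coordinates of $b_i^{\parallel}$ and the Gram matrix $\bigl(\langle b_i^{\perp}, b_j^{\perp}\rangle\bigr)_{i,j\le k}$ are both read off from the inner products recorded by $q$. Since $H_n$ is isometrically isomorphic to $\ell_2$, any positive semidefinite Gram matrix is realized by some $k$-tuple in $H_n$; pick $b_i'^{\perp} \in H_n$ with $\langle b_i'^{\perp}, b_j'^{\perp}\rangle = \langle b_i^{\perp}, b_j^{\perp}\rangle$ and put $b_i' = b_i^{\parallel} + b_i'^{\perp}$. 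A direct check using $b_i^{\perp} \perp \bar a_n$ shows $\mtp(\bar b'/\bar a_n) = q$, so $\bar b' \in N_n$ witnesses realization of $q$.

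For the joint extendability, take any $\bar b_n \in N_n$ with $\mtp(\bar b_n) = p$. The subspaces $V_n$ are pairwise orthogonal, because the spans $\lspan(\bar a_n), \lspan(\bar a_m)$ are orthogonal by hypothesis, each is orthogonal to all of $H$, and the reservoirs $H_n, H_m$ are orthogonal by construction. Hence $\langle b_{n,i}, b_{m,j}\rangle = 0 = \langle a_{n,i}, a_{m,j}\rangle$ for $n \neq m$, while for $n = m$ the matches come from $\mtp(\bar b_n) = p$. So $a_{n,i} \mapsto b_{n,i}$ extends linearly to an isometry of $\lspan(\bigcup_n \bar a_n)$ onto $\lspan(\bigcup_n \bar b_n)$. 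The complement of the domain is $H$, and the complement of the image contains $H_\infty$ (since the image sits in $\bigoplus_n V_n$); both complements are infinite-dimensional, so the isometry extends to an orthogonal automorphism $\varphi \in O(\ell_2)$ with $\varphi(\bar a_n) = \bar b_n$ for all $n$. The only real design subtlety is reserving $H_\infty$ in advance---exactly the extra infinite-dimensional slack supplied by the proper-orthogonality hypothesis---so that the final extension step has room to proceed.
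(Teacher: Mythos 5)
Your proof is correct and essentially reproduces the paper's argument, which chooses pairwise orthogonal infinite-dimensional subspaces $H_n$ of $H = \bigl(\lspan(\bigcup_n \bar a_n)\bigr)^\perp$ and witnesses independence via the spans $\lspan(\bar a_n)\oplus H_n$. One small remark: the reserved block $H_\infty$ is a harmless but redundant precaution, since each tuple $\bar b_n$ spans only a finite-dimensional subspace of the infinite-dimensional $V_n$, so the orthogonal complement of $\overline{\lspan(\bigcup_n \bar b_n)}$ is already infinite-dimensional even when $\bigoplus_n H_n$ exhausts $H$; the hypothesis that $H$ is infinite-dimensional is what makes the reservoirs $H_n$ themselves possible.
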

\begin{proof}
  Let $\bar a_n$ be a proper orthogonal sequence in the
  quantifier-free type of a given $\bar a$ and let $H_n$ be
  a sequence of orthogonal infinite-dimensional subspaces of
  the orthogonal complement of the space spanned by the
  vectors in all $\bar a_n$'s. Write $H_n'$ for the space
  spanned by $H_n$ and $\bar a_n$ and note that the
  subspaces $H_n'$ witness that the sequence $\bar a_n$ is
  independent.
\end{proof}

\begin{lemma}
  The Hilbert space $\ell_2$ admits independent sequences.
\end{lemma}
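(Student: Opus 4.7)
The plan is to exhibit a proper orthogonal sequence $\bar a_n$ in $\mtp(\bar a)$ satisfying $\bar a_{n+1}\equiv_{\bar s_n}\bar a_n$ for every $n$, whereupon Claim \ref{orthogonal} produces the required independence automatically. To decouple the ``infinite codimension'' clause in the definition of proper orthogonality from the recursive construction, I would first fix, once and for all, an infinite-dimensional closed subspace $Z\subseteq\ell_2$ whose orthogonal complement $Z^\perp$ is also infinite-dimensional (obtained, say, by splitting a fixed orthonormal basis into two infinite pieces). Since the entire sequence will be placed inside $Z$, we automatically get $Z^\perp\subseteq(\lspan\bigcup_n\bar a_n)^\perp$, so the infinite-codimension clause takes care of itself.

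I would then pick $\bar a_n$ recursively, requiring at step $n$ that
\[
\bar a_n\in V_n \;:=\; Z\cap\lspan(\bar s_{n-1}\cup\bar s_n)^\perp\cap\lspan(\bar a_m:m<n)^\perp
\]
(with the convention $\bar s_{-1}:=\emptyset$) and that $\bar a_n$ realise $\mtp(\bar a)$. The subspace $V_n$ is the intersection of the infinite-dimensional space $Z$ with the orthogonal complement of a finite-dimensional subspace, so it is infinite-dimensional, and any prescribed Gram matrix can be realised inside it by the obvious Gram--Schmidt construction applied to a countable orthonormal set in $V_n$.

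Verification then splits into two pieces. Pairwise orthogonality $\bar a_n\perp\bar a_m$ for $m\neq n$ is enforced at step $\max(n,m)$, and together with $\bigcup_n\bar a_n\subseteq Z$ this makes the sequence proper orthogonal, hence independent by Claim \ref{orthogonal}. For the compatibility $\bar a_{n+1}\equiv_{\bar s_n}\bar a_n$: both $\bar a_n$ and $\bar a_{n+1}$ are orthogonal to $\bar s_n$ by construction (the first because $V_n\subseteq\lspan(\bar s_{n-1}\cup\bar s_n)^\perp$, the second because $V_{n+1}\subseteq\lspan(\bar s_n\cup\bar s_{n+1})^\perp$), and both realise $\mtp(\bar a)$. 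Hence the matrices of inner products of $(\bar a_n,\bar s_n)$ and $(\bar a_{n+1},\bar s_n)$ agree entry by entry, and since in the Hilbert-space language quantifier-free types over a finite tuple are determined precisely by those inner-product matrices, the two tuples have the same quantifier-free type over $\bar s_n$.

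The only step that requires any genuine bookkeeping -- and which I would flag as the main subtle point rather than a serious obstacle -- is the deliberate inclusion of $\bar s_{n-1}$ alongside $\bar s_n$ in the definition of $V_n$. It is precisely this extra orthogonality that lets the inductive step at $n$ carry the compatibility $\bar a_n\equiv_{\bar s_{n-1}}\bar a_{n-1}$ built at the previous step; without forcing $\bar a_n\perp\bar s_{n-1}$ one could not guarantee that the projections of $\bar a_n$ and $\bar a_{n-1}$ onto $\lspan(\bar s_{n-1})$ agree. Since each step only subtracts finitely many dimensions from $Z$, the construction never runs out of room, and the induction proceeds without conflict.
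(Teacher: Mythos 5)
Your construction is correct and follows essentially the same recursive strategy as the paper's: build a proper orthogonal sequence of realizations of $\mtp(\bar a)$, arranging at each step that both $\bar a_n$ and $\bar a_{n+1}$ are orthogonal to $\bar s_n$, and then invoke the claim that proper orthogonal sequences are independent. The only (cosmetic) difference lies in the bookkeeping: you fix a coinfinite-dimensional subspace $Z$ and include $\bar s_{n-1}$ among the orthogonality constraints for $\bar a_n$, whereas the paper assumes without loss of generality that the $\bar s_n$ are increasing and introduces auxiliary vectors $v_n$ to guarantee infinite codimension.
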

\begin{proof}
  Fix $k\in\N$ and $\bar a=(a_1,\ldots,a_k)\in\U^k$. Let
  $(\bar s_n:n\in\N)$ be a sequence of finite tuples and
  without loss of generality assume that $\bar s_n$ is a
  subtuple of $\bar s_{n+1}$. We need to find an independent
  sequence $\bar a_n$ in the quantifier-free type of $\bar
  a$ such that $\bar a_n\equiv_{\bar s_n}\bar a_{n+1}$. Find
  the sequence $\bar a_n$ of tuples as well as additional
  vectors $v_n$ so that
  \begin{itemize}
  \item The elements of $\bar a_n$ are orthogonal to all
    elements of $\bar s_n$, to all elements of $\bar a_i$'s
    for $i<n$ as well as to $v_n$
  \item $v_{n+1}$ is orthogonal to all elements of $\bar
    a_i$'s for $i\leq n$
   \item $\bar a_n\equiv_{\bar s_n}\bar a_{n+1}$
  \end{itemize}
  The sequence is easy to construct using the fact that if
  $\bar b$ and $\bar s$ are two tuples whose elements are
  pairwise orthogonal, then the orbit of $\bar b$ with
  respect to the stabilizer of $\bar s$ contains vectors
  orthogonal to any finite tuple. The sequence is then
  proper orthogonal, and hence independent by Claim
  \ref{orthogonal}.
\end{proof}

\section{Questions}
\label{sec:questions}

There are still many natural examples of automorphism groups
for which the automatic continuity (and even the uniqueness
of Polish group topology) is open. Here we list some of
them.

\begin{question}
  Does the group of automorphisms of the Cuntz algebra
  $\mathcal{O}_2$ have the automatic continuity property?
\end{question}

\begin{question}
  Does the group of automorphisms of the hyperfinite II$_1$
  factor have the automatic continuity property?  
\end{question}

\begin{question}
  Does the group of linear isometries of the Gurari\v\i\ 
  space have the automatic continuity property?
\end{question}

Finally, the problem of uniqueness of separable topology for
the group $\iso(\U)$ remains open. For other groups
considered in this paper, uniqueness of separable topology
follows from the combination of automatic continuity
property and minimality (or even \textit{total minimality}
which says that any Hausdorff quotient of the group is
minimal). For the unitary group this has been proved by
Stojanov \cite{stojanov} and for the group
$\aut([0,1],\lambda)$ by Glasner \cite{glasner} (see also
\cite{byt} for a recent general framework for these kind of
results).

\begin{question}
  Is the group $\iso(\U)$ minimal?
\end{question}

\bigskip
\subsection*{Acknowledgement}
Part of this work has been done during the author's stay at
the University Paris 7 in the academic year
2013$\slash$14. The author is grateful to Zo\'e Chatzidakis,
Amador Mart\'\i n-Pizarro and Todor Tsankov for many useful
comments. The author also wishes to thank Ita\"\i\ Ben
Yaacov, Alexander Kechris and Julien Melleray for inspiring
discussions at the Workshop on Homogeneous Structures during
the special semester at the Hausdorff Institute in
Mathematics in the fall 2013. The author would also like to
thank Piotr Przytycki for valuable discussions and Maciej
Malicki for useful comments.

\bibliographystyle{plain}
\bibliography{refs}

\end{document}